\documentclass{article}
\usepackage{array}
\usepackage{tabularx}
\usepackage{amsmath,amssymb}
\usepackage{mathrsfs}
\usepackage{amsthm}
\usepackage{comment}
\usepackage[all]{xy}
\usepackage[
pdftex,
bookmarks=false,
colorlinks=true,
debug=true,
naturalnames=true,
pdfnewwindow=true]{hyperref}

\theoremstyle{definition}
\newtheorem{Def}{Definition}[section]
\newtheorem{Thm}[Def]{Theorem}

\newtheorem{Prop}[Def]{Proposition}
\newtheorem{Cor}[Def]{Corollary}
\newtheorem{Rem}[Def]{Remark}

\setcounter{tocdepth}{1}

\numberwithin{equation}{section}

\newcommand{\C}{\mathbb{C}}
\newcommand{\kk}{\Bbbk}
\newcommand{\Cc}{\mathcal{C}}

\newcommand{\g}{\mathfrak{g}}

\newcommand{\Z}{\mathbb{Z}}
\newcommand{\modcat}{\text{-}\mathrm{mod}}
\newcommand{\modfd}{\text{-}\mathrm{mod}_{\mathrm{fd}}}
\newcommand{\vdim}{\underline{\dim}\,}
\newcommand{\Ext}{\mathop{\mathrm{Ext}}\nolimits}
\newcommand{\End}{\mathop{\mathrm{End}}\nolimits}
\newcommand{\Hom}{\mathop{\mathrm{Hom}}\nolimits}
\newcommand{\Aut}{\mathop{\mathrm{Aut}}\nolimits}

\newcommand{\Ker}{\mathop{\text{Ker}}}
\newcommand{\Ima}{\mathop{\text{Im}}}

\newcommand{\Q}{\mathbb{Q}}
\newcommand{\Stab}{\mathop{\mathrm{Stab}}\nolimits}

\newcommand{\Rep}{\mathop{\mathrm{Rep}}\nolimits}
\newcommand{\KP}{\mathrm{KP}}
\newcommand{\pprime}{\prime \prime}
\newcommand{\M}{\mathfrak{M}}
\newcommand{\LL}{\mathfrak{L}}
\newcommand{\Mg}{\mathfrak{M}^{\bullet}}
\newcommand{\Lg}{\mathfrak{L}^{\bullet}}
\newcommand{\G}{\mathbb{G}}
\newcommand{\hlam}{{\boldsymbol \lambda}}

\newcommand{\cP}{\mathsf{P}}
\newcommand{\cQ}{\mathsf{Q}}
\newcommand{\lP}{\mathscr{P}}
\newcommand{\cl}{\mathsf{cl}}

\newcommand{\cR}{\mathsf{R}}

\newcommand{\hK}{\widehat{K}}

\newcommand{\SG}{\mathfrak{S}}

\newcommand{\bW}{\mathbb{W}}
\newcommand{\hR}{\widehat{R}}
\newcommand{\hH}{\widehat{H}}
\newcommand{\HH}{\mathbb{H}}
\newcommand{\tZ}{\widetilde{\mathfrak{Z}}}
\newcommand{\B}{\mathcal{B}}
\newcommand{\F}{\mathcal{F}}
\newcommand{\ii}{\mathbf{i}}
\newcommand{\mm}{\mathbf{m}}
\newcommand{\Td}{\mathrm{Td}}
\newcommand{\ch}{\mathrm{ch}}
\newcommand{\RR}{\mathrm{RR}}
\newcommand{\rr}{\mathfrak{r}}
\newcommand{\bV}{\mathbb{V}}
\newcommand{\hV}{\widehat{V}}
\newcommand{\Zg}{Z^{\bullet}}
\newcommand{\cZ}{\mathcal{Z}}
\newcommand{\hO}{\widehat{\mathcal{O}}}
\newcommand{\bK}{\mathbb{K}}
\newcommand{\AH}{H^{\mathrm{af}}}

\numberwithin{equation}{section}

\title{Geometric realization of Dynkin quiver type
quantum affine Schur-Weyl duality}
\author{Ryo Fujita\thanks{Department
of Mathematics, Kyoto University, 
Oiwake Kita-Shirakawa Sakyo Kyoto 
606-8502 JAPAN, 
\texttt{E-mail:rfujita@math.kyoto-u.ac.jp}}}

\begin{document}

\maketitle


\begin{abstract}
For a Dynkin quiver $Q$ of type $\mathsf{ADE}$
and a sum $\beta$ of simple roots,
we construct a bimodule
over the quantum loop algebra and the quiver Hecke algebra
of the corresponding type via equivariant $K$-theory,
imitating Ginzburg-Reshetikhin-Vasserot\rq{}s 
geometric realization
of the quantum affine Schur-Weyl duality.
Our construction is based on
Hernandez-Leclerc\rq{}s isomorphism between 
a certain graded quiver variety and the space of representations of 
the quiver $Q$ of dimension vector $\beta$.
We identify the functor induced from our bimodule
with Kang-Kashiwara-Kim's  generalized 
quantum affine Schur-Weyl duality functor. 
As a by-product, we verify a conjecture by
Kang-Kashiwara-Kim on the simpleness
of some poles of normalized $R$-matrices for any quiver $Q$
of type $\mathsf{ADE}$.
\end{abstract}


\section{Introduction}

For a fixed pair $(n, d)$ of positive integers,
we have the following two fundamental objects:
the complex simple Lie algebra $\mathfrak{sl}_{n+1}$ of type $\mathsf{A}_{n}$
and 
the symmetric group $\SG_{d}$ of degree $d$.
The natural $(\mathfrak{sl}_{n+1}, \SG_{d})$-bimodule structure
on the tensor power $(\C^{n+1})^{\otimes d}$  
produces a close relationship between 
their representation theories.
This phenomenon is
known as the classical Schur-Weyl duality 
and has many interesting variants.

The quantum affine Schur-Weyl duality 
is a variant
involving their quantum affinizations:
the quantum loop algebra 
$U_{q}(L\mathfrak{sl}_{n+1})$ of $\mathfrak{sl}_{n+1}$
and 
the affine Hecke algebra $\AH_d(q)$ of $GL_d$.
Both algebras are defined over $\kk := \Q(q)$.
Here
we equip the tensor power
$\bV^{\otimes d}$ of 
the natural representation $\bV := \kk^{n+1}[z^{\pm 1}]$ of
$U_{q}(L \mathfrak{sl}_{n+1})$
with a commuting right action of 
$\AH_{d}(q^{2})$ using the $R$-matrices. 
Chari-Pressley \cite{CP96} proved 
that the induced functor
$$
\AH_{d}(q^{2}) \modcat
\to
U_{q}(L \mathfrak{sl}_{n+1}) \modcat;
\quad
M \mapsto \bV^{\otimes d} \otimes_{\AH_{d}(q^{2})} M   
$$ 
gives an equivalence between suitable subcategories of finite-dimensional modules.

The quantum affine Schur-Weyl duality 
has a beautiful geometric realization
due to Ginzburg-Reshetikhin-Vasserot \cite{GRV94}.
Here we recall their construction briefly.
Let 
$\mu_{d} : \F_{d} \to \mathcal{N}_{d}$
be the Springer resolution of
the nilpotent cone $\mathcal{N}_{d}$
of $\mathfrak{gl}_{d}(\C)$,
where $\F_{d}$ is the cotangent bundle of 
the full flag variety of $GL_{d}(\C)$.
The morphism $\mu_{d}$
is equivariant with respect to 
a natural action of the group
$\G_{d} := GL_{d}(\C) \times \C^{\times}$,
where $\C^{\times}$ acts as the scalar multiplication
on the cone $\mathcal{N}_{d}$.
Due to Ginzburg and Kazhdan-Lusztig,
the affine Hecke algebra $\AH_{d}(q^{2})$
is isomorphic to the convolution algebra
$K^{\G_{d}}(\cZ_{d}) \otimes_{A}
\kk$
of the equivariant $K$-group of the
Steinberg variety $
\cZ_{d} := \F_{d} \times_{\mathcal{N}_{d}} \F_{d}$,
where $A = R(\C^{\times}) = \Z[v^{\pm 1}]$
is the representation ring of $\C^{\times}$ and
$- \otimes_{A} \kk$ means the specialization $v \mapsto q$.
On the other hand, 
we consider another Steinberg type variety
$Z_{d} := \M_{d} \times_{\mathcal{N}_{d}} \M_{d}$.
Here $\M_d$ is the cotangent bundle 
of the variety of partial flags in $\C^{d}$
of length $\le n+1$.
Due to Ginzburg-Vasserot,
there is an algebra homomorphism
$\Phi : U_{q}(L \mathfrak{sl}_{n+1}) 
\to K^{\G_{d}}(Z_{d}) \otimes_{A}\kk$
with some good properties.
Based on these facts, Ginzburg-Reshetikhin-Vasserot 
considered the intermediary fiber product
$\M_{d} \times_{\mathcal{N}_{d}}
\F_{d}$
and identified its equivariant $K$-group with the bimodule 
$\bV^{\otimes d}$.
More precisely, they established an isomorphism 
$\bV^{\otimes d} 
\cong
K^{\G_{d}}(\M_{d} \times_{\mathcal{N}_{d}} 
\F_{d}) \otimes_{A} \kk$
making
the following diagram commute:
$$
\xy
\xymatrix{
U_{q}(L\mathfrak{sl}_{n+1}) 
\ar[r]
\ar[d]^{\Phi}
&
\End\left( \bV^{\otimes d} \right)
\ar[d]^-{\cong}
&
\AH_{d}(q^{2})
\ar[l]
\ar[d]^-{\cong}
\\
K^{\G_{d}}(Z_{d})\otimes_{A} \kk
\ar[r]
&
\End \left( K^{\G_{d}}(\M_{d} \times_{\mathcal{N}_{d}} 
\F_{d}) \otimes_{A} \kk
\right)
&
K^{\G_{d}}(\cZ_{d})\otimes_{A}\kk,
\ar[l]
}
\endxy
$$
where horizontal arrows denote the bimodule structures.

Recently, 
in a series 
of papers 
\cite{KKK18, KKK15, KKKO15, KKKO16},
Kang, Kashiwara, Kim and Oh 
established 
some interesting generalized versions
of the quantum affine Schur-Weyl duality.
One of them (treated in \cite{KKK15} by Kang-Kashiwara-Kim) is associated with
a pair $(Q, \beta)$ of 
a Dynkin quiver $Q$ of type $\mathsf{ADE}$
and a sum $\beta=\sum_{i} d_{i} \alpha_{i}$ of simple roots,
which plays a similar role as the pair $(n, d)$ in the previous paragraphs.
One player is
the quantum loop algebra $U_{q}(L\g)$ 
of the complex simple Lie algebra $\g$
whose Dynkin diagram is the underlying graph of $Q$.
The other is 
the quiver Hecke (KLR) algebra $H_{Q}(\beta)$
associated with $(Q, \beta)$, or actually its completion $\hH_{Q}(\beta)$ 
along the grading.
The quiver Hecke algebra $H_{Q}(\beta)$ is regarded as a generalization 
of the affine Hecke algebra $\AH_{d}(q)$ 
from the viewpoint of the categorification of the quantum group. 
Inspired by the work of Hernandez-Leclerc \cite{HL15},
Kang-Kashiwara-Kim \cite{KKK15}
constructed on a left $U_{q}(L\g)$-module $\hV^{\otimes \beta}$
which is a direct sum of some tensor products of affinized fundamental modules
a commuting right action of the algebra $\hH_{Q}(\beta)$ by
using the normalized $R$-matrices. 
However, to make the $\hH_{Q}(\beta)$-action well-defined, 
we need to assume the simpleness of some poles of the normalized $R$-matrices.
This assumption was verified for type $\mathsf{AD}$ in \cite{KKK15} by 
an explicit computation of
the denominators of the normalized $R$-matrices.
On the other hand, for type $\mathsf{E}$,
this remains a conjecture. 
Under this well-definedness assumption, 
Kang-Kashiwara-Kim \cite{KKK15} also proved that the induced functor 
$$
\hH_{Q}(\beta) \modfd \to U_{q}(L \g)\modfd; \quad 
M \mapsto \hV^{\otimes \beta} \otimes_{\hH_{Q}(\beta)} M
$$
is exact, 
factors through the $\beta$-block $\Cc_{Q, \beta}$ of a monoidal 
full subcategory 
$\Cc_{Q}$ of $U_{q}(L\g)\modfd$ introduced by Hernandez-Leclerc \cite{HL15}
and gives a bijection between 
the simple isomorphism classes.
More recently, the author \cite{Fujita17} proved that
it actually gives an equivalence 
$\hH_{Q}(\beta) \modfd \simeq \Cc_{Q, \beta}$ 
by using the notion of affine highest weight category.
Note that here we forget the gradings by working with the completion 
$\hH_{Q}(\beta)$.   

In the present paper, we give a geometric realization of
the bimodule $\hV^{\otimes \beta}$ imitating Ginzburg-Reshetikhin-Vasserot\rq{}s realization.
In our case, the nilpotent cone $\mathcal{N}_{d}$ is replaced by
the space $E_{\beta}$ of representations of the quiver $Q$ over $\C$
of dimension vector $\beta$.
The group 
$G_{\beta}:= \prod_{i} GL_{d_{i}}(\C)$ naturally acts on $E_{\beta}$.
Instead of the Springer resolution 
$\F_{d} \to \mathcal{N}_d$, we consider a proper morphism $\F_{\beta} \to E_{\beta}$
from a ``quiver flag variety'' $\F_{\beta}$
introduced by Lusztig
in order to construct the canonical basis of the quantum group.
Varagnolo-Vasserot \cite{VV11} proved that 
the quiver Hecke algebra $H_{Q}(\beta)$ is isomorphic to the convolution algebra of the
equivariant Borel-Moore homology $H^{G_{\beta}}_{*}(\cZ_{\beta}, \kk)$,
where $\cZ_{\beta} := \F_{\beta} \times_{E_{\beta}} \F_{\beta}$.
After 
completion, 
it is isomorphic to the completed equivariant 
$K$-group $\hK^{G_{\beta}}(\cZ_{\beta})_{\kk}$.
On the $U_{q}(L\g)$-side,
we consider a canonical $G_{\beta}$-equivariant proper morphism
$\Mg_{\beta} \to \Mg_{0, \beta}$ between 
certain graded quiver varieties.
By Nakajima \cite{Nakajima01},
we have an algebra homomorphism
$
\widehat{\Phi}_{\beta} : U_{q}(L \g) \to \hK^{G_{\beta}}(\Zg_{\beta})_{\kk},
$
where $\Zg_{\beta} := \Mg_{\beta} \times_{\Mg_{0, \beta}} \Mg_{\beta}$.
The key of our construction is a $G_{\beta}$-equivariant 
isomorphism 
$\Mg_{0, \beta} \cong
E_{\beta}$ due to Hernandez-Leclerc \cite{HL15}, 
which was originally established in order to give a geometric interpretation 
to their isomorphism between the Grothendieck ring $K(\Cc_{Q})$ 
and the coordinate ring of the maximal unipotent subgroup 
(see Remark \ref{Rem:Grotisom}).
This allows us to form the intermediary fiber product
$\Mg_{\beta} \times_{E_{\beta}} \F_{\beta}$.

\begin{Thm}[=Theorem~\ref{Thm:left} + 
\ref{Thm:main}, see also Remark~\ref{Rem:KKK}]
There is an isomorphism
$$\hV^{\otimes \beta} \cong \hK^{G_{\beta}}(\Mg_{\beta} \times_{E_{\beta}} \F_{\beta})_{\kk}$$
such that the following diagram commutes (up to a twist):
$$
\xy
\xymatrix{
U_{q}(L\g) 
\ar[r]
\ar[d]^-{\widehat{\Phi}_{\beta}}
&
\End\left(\hV^{\otimes \beta}\right)
\ar[d]^-{\cong}
&
\hH_{Q}(\beta)
\ar[l]
\ar[d]^-{\cong}
\\
\hK^{G_{\beta}}(\Zg_{\beta})_{\kk}
\ar[r]
&
\End \left( \hK^{G_{\beta}}(\Mg_{\beta} \times_{E_{\beta}} 
\F_{\beta})_{\kk}
\right)
&
\hK^{G_{\beta}}(\cZ_{\beta})_{\kk},
\ar[l]
}
\endxy
$$
where the horizontal arrows denote the bimodule structures.
\end{Thm}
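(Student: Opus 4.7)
My plan is to follow the blueprint of Ginzburg-Reshetikhin-Vasserot as faithfully as possible, using the Hernandez-Leclerc isomorphism $\Mg_{0,\beta}\cong E_\beta$ to glue the $U_q(L\g)$-side and the quiver Hecke side. The geometric bimodule is
$\bW:=\hK^{G_\beta}(\Mg_\beta\times_{E_\beta}\F_\beta)_\kk$, which, as the $K$-group of a fiber product over $E_\beta$, carries two naturally commuting convolution actions: a left action of $\hK^{G_\beta}(\Zg_\beta)_\kk$ along the $\Mg_\beta$-factor, and a right action of $\hK^{G_\beta}(\cZ_\beta)_\kk$ along the $\F_\beta$-factor. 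Pulling back through $\widehat{\Phi}_\beta$ and the Varagnolo-Vasserot isomorphism $\hH_Q(\beta)\cong\hK^{G_\beta}(\cZ_\beta)_\kk$ will then turn $\bW$ into a $(U_q(L\g),\hH_Q(\beta))$-bimodule making the bottom row and both vertical arrows of the diagram commute by construction, so the entire content of the theorem is contained in (a) the identification $\bW\cong\hV^{\otimes\beta}$ of left $U_q(L\g)$-modules and (b) the compatibility of the geometric right action with the R-matrix action.

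For part (a) (which is Theorem~\ref{Thm:left}), I would decompose $\F_\beta=\bigsqcup_\ii \F_\beta(\ii)$ according to words $\ii$ of shape $\beta$ and work block by block. On each stratum the morphism $\F_\beta(\ii)\to E_\beta\cong\Mg_{0,\beta}$ factors through a chain of Hecke-style correspondences corresponding to successive tensoring by affinized fundamental modules $\hV_{i_k}(c_k)$, where the spectral parameter $c_k$ is prescribed by the Hernandez-Leclerc combinatorics of $Q$. Nakajima's geometric realization of tensor products of fundamental modules, combined with proper base change and concentration arguments, should then identify $\hK^{G_\beta}(\Mg_\beta\times_{E_\beta}\F_\beta(\ii))_\kk$ with $\hV_{i_1}(c_1)\otimes\cdots\otimes\hV_{i_{|\beta|}}(c_{|\beta|})$ as a left $U_q(L\g)$-module, and summing over $\ii$ yields $\hV^{\otimes\beta}$. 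The $U_q(L\g)$-equivariance of this identification follows formally from associativity of convolution.

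Part (b), matching the induced right $\hH_Q(\beta)$-action with the R-matrix action of \cite{KKK15}, is the main obstacle. I would check equality on generators. The polynomial subalgebra generated by the $x_k$'s is easy: both sides act by tautological line bundles on $\F_\beta$ and by the deformation parameters of $\hV^{\otimes\beta}$, and these match under the identification from part (a). For the braid-type generators $\tau_k$, the comparison reduces to a local computation on a rank-two subquiver, where one computes the convolution class of the correspondence $\F_\beta(\ii)\times_{E_\beta}\F_\beta(s_k\ii)$ explicitly and matches it with the explicit formula of the normalized R-matrix acting between two consecutive fundamental factors. The combinatorics of $Q$ ensures that the spectral parameters of adjacent tensor factors lie exactly at places where the normalized R-matrix has at worst a simple pole, so the geometric action is unconditionally well defined even in type $\mathsf{E}$; this very well-definedness delivers the Kang-Kashiwara-Kim conjecture as a by-product, rather than being an input to it.

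The "up to a twist" caveat in the statement should reflect a normalization discrepancy between the geometric spectral parameters dictated by $\Mg_\beta$ and the convention of \cite{KKK15}; I expect it to be absorbed by a grading shift on the $\hH_Q(\beta)$-side and by rescaling the affinization parameters by powers of $q$ depending only on $Q$ and $\ii$. Once the actions are matched on generators and the twist is identified, the commutativity of the two convolution actions on $\bW$ propagates to commutativity of the full diagram by associativity, completing the proof.
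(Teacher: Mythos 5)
Your architecture matches the paper's for part (a): the "chain of Hecke-style correspondences" you describe is really Nakajima's tensor product variety $\widetilde{\mathfrak{Z}}(\lambda; w_{\ii})$ together with the reduction
$\Mg_{\beta}\times_{E_{\beta}}\F_{\ii}\cong G_{\beta}\times^{B_{\ii}}\widetilde{\mathfrak{Z}}(\lambda; w_{\ii})^{T_{\beta}}$, and the localization theorem plus $\rr'_{\beta}$-adic completion finish the identification of the left $U_q(L\g)$-module. This is essentially Theorem~\ref{Thm:left}.

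For part (b) there is a genuine gap. You propose to compute the convolution class on the correspondence $\F_{\ii}\times_{E_{\beta}}\F_{\ii\cdot s_k}$ explicitly and match it against an explicit formula for the normalized $R$-matrix. That is not what the paper does, and it is problematic precisely in type $\mathsf{E}$: "matching explicit formulas" presupposes you know the denominators $d_{i_1,i_2}(u)$, which is exactly the open conjecture — so your method would be circular for the by-product you want. The paper never computes the $\tau_k$-convolution class. Instead, it first establishes the formula for $\psi(x_k)$ geometrically and then \emph{bootstraps} using the algebra relations of $H_Q(\beta)$ (available because Varagnolo--Vasserot already gives $\hH_Q(\beta)\cong\hK^{G_{\beta}}(\cZ_{\beta})_{\kk}$ and hence those relations hold for the geometric action): the commutation relation between $\tau_k$ and the $x_l$'s forces the operator $\mathcal{D}\,\psi(e(\ii)\tau_k)+1$ (with $\mathcal{D}=\log(a_{i_k}^{-1}X_k)-\log(a_{i_{k+1}}^{-1}X_{k+1})$ when $i_k=i_{k+1}$; analogously $\psi(e(\ii)\tau_k)$ itself otherwise) to lie in $\Hom_{U_q\otimes\hO_{\ii}}(\hV^{\otimes\ii},\varphi_{s_k}^*\hV^{\otimes\ii})$. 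Generic irreducibility of the localized tensor product makes this $\Hom$-space one-dimensional over $\bK_{\ii}$, spanned by $R_k^{\ii}$, and a single normalization check on the highest-weight space (Proposition~\ref{Prop:hwspace}, via the Khovanov--Lauda polynomial representation) fixes the scalar. No explicit $R$-matrix or convolution class enters, and this is precisely why the simple-pole bound in Corollary~\ref{Cor:KKKconj} falls out for free: the a priori shape of the $\tau_k$-formula already has at most a simple zero at $X_{k+1}/X_k=a_{i_{k+1}}/a_{i_k}$. Without this characterization step, your "well-definedness delivers the conjecture" slogan has no proof behind it.

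A minor correction: the twist in Remark~\ref{Rem:KKK} is the change of variable $\log(a_{i_k}^{-1}X_k)\leftrightarrow a_{i_k}^{-1}X_k-1$ on $\hO_{\ii}$ (exponential versus additive coordinates native to $K$-theory versus Borel--Moore homology), not a grading shift or $q$-power rescaling of spectral parameters.
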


Actually,
our geometric construction of the $\hH_{Q}( \beta )$-action 
is independent of that of \cite{KKK15}, 
which shares the same characterization of the actions. 
Therefore, their comparison yields a uniform proof of:

\begin{Thm}[=Corollary \ref{Cor:KKKconj}]
\label{Thm:KKKconj}
Kang-Kashiwara-Kim's conjecture 
 \cite[Conjecture 4.3.2]{KKK15}
on the simpleness
of some specific poles of normalized $R$-matrices 
for tensor products of fundamental modules
is true for 
any quiver $Q$ of type $\mathsf{ADE}$.
\end{Thm}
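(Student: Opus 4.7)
The plan is to derive the conjecture as a direct consequence of the main theorem by exploiting the asymmetry between the two constructions of the $\hH_Q(\beta)$-action on $\hV^{\otimes \beta}$: Kang-Kashiwara-Kim's construction from \cite{KKK15} uses normalized $R$-matrices and requires the simpleness of certain poles in order to be well-defined, whereas the present geometric construction via convolution in $\hK^{G_{\beta}}(\cZ_{\beta})_\kk$ is defined unconditionally for every quiver $Q$ of type $\mathsf{ADE}$. If the two constructions can be shown to coincide, then the regularity of the geometric action forces the KKK15 formulas to be regular, which is tautologically equivalent to the simpleness of the relevant poles.

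First I would isolate the precise role of the simpleness hypothesis in \cite{KKK15}: the action of the KLR intertwining generator on a tensor factor of the form $V_{i}(z_k) \otimes V_{j}(z_{k+1})$ is defined as a renormalized $R$-matrix divided by an explicit scalar factor that vanishes at the critical spectral parameter, and this quotient extends to a regular operator precisely when the pole is simple. Next, I would invoke the main theorem to produce the desired action unconditionally: under the identification $\hV^{\otimes \beta} \cong \hK^{G_{\beta}}(\Mg_{\beta} \times_{E_{\beta}} \F_{\beta})_\kk$, the right convolution action of $\hK^{G_{\beta}}(\cZ_{\beta})_\kk \cong \hH_Q(\beta)$ automatically produces honest endomorphisms of $\hV^{\otimes \beta}$ in every type.

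Finally, I would compare the two actions. Both commute with the $U_q(L\g)$-action by the commutative diagram of the main theorem, and both depend rationally on the spectral parameters of the constituent fundamental modules. On the generic locus where no critical specialization occurs, the KKK15 formula is regular and, being characterized by the same $U_q(L\g)$-linearity and intertwining properties as the geometric action, must agree with the latter. But the geometric action is regular across the critical locus as well; therefore, the KKK15 formula admits a regular extension there, which is possible only if the pole has order exactly one. The main obstacle will be making this matching precise: one must identify the image of a KLR intertwining generator under convolution with an explicit residue of a normalized $R$-matrix, which requires a careful $K$-theoretic computation on the component of $\cZ_{\beta}$ adjacent to the diagonal, together with an analysis of its image under the Hernandez-Leclerc isomorphism $\Mg_{0, \beta} \cong E_{\beta}$. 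Once this dictionary between convolution and $R$-matrix residues is in place, the conjecture follows immediately.
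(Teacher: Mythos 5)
Your high-level strategy is exactly the paper's: the geometric convolution action of $\hH_Q(\beta)$ on $\hK^{G_\beta}(\Mg_\beta \times_{E_\beta} \F_\beta)_\kk$ is defined unconditionally, so once it is matched with Kang-Kashiwara-Kim's $R$-matrix formulas, the regularity of the geometric action forces the pole bound. You also correctly identify the main obstacle — pinning down the precise dictionary between the geometric $\tau_k$-action and the renormalized $R$-matrix.

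Where you diverge from the paper is in how you propose to establish that dictionary, and this is where your plan is unlikely to work as stated. You suggest ``a careful $K$-theoretic computation on the component of $\cZ_\beta$ adjacent to the diagonal.'' But the geometric $\tau_k$-action is defined by convolution with a nontrivial cycle, and directly computing its effect as an operator $\hV^{\otimes \ii} \to \hV^{\otimes \ii \cdot s_k}$ under the chain of isomorphisms is hard and not what the paper does. The paper (Theorem~\ref{Thm:main}) instead computes only the $x_k$-action geometrically (which reduces to multiplication by a line-bundle class), and then \emph{characterizes} the geometric $\tau_k$-action purely algebraically: the commutation relations $(\tau_k x_l - x_{s_k(l)}\tau_k)e(\ii) = \pm e(\ii)$ or $0$ in $H_Q(\beta)$, combined with the established $x_l$-formula, force $\mathcal{D}\,\psi(e(\ii)\tau_k)+1$ (in the case $i_k=i_{k+1}$) or $\psi(e(\ii)\tau_k)$ itself (otherwise) to be a $U_q\otimes\hO_\ii$-linear twisted intertwiner $\hV^{\otimes\ii}\to\varphi_{s_k}^*\hV^{\otimes\ii\cdot s_k}$; generic irreducibility of $\hV^{\otimes\ii}_\bK$ makes the space of such intertwiners one-dimensional, spanned by $R^{\ii}_k$; and the scalar is pinned down by evaluating on the highest-weight vector, where the action is already known explicitly via the Khovanov-Lauda polynomial representation (Proposition~\ref{Prop:hwspace} and Theorem~\ref{Thm:KL}). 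Your plan omits this algebraic bootstrapping, which is the actual engine of the proof. Two smaller issues: the conclusion should read ``at most one,'' not ``exactly one'' (indeed the order is zero when there is no arrow $i_1 \leftarrow i_2$), and the case $i_1 = i_2$ should be set aside at the start using the known fact $d_{i,i}(1)\neq 0$ from \cite[Proposition~9.3]{Kashiwara02}.
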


Besides, a discussion involving geometric extension algebras 
yields another proof of the equivalence 
$\hH_{Q}(\beta) \modfd \simeq \Cc_{Q, \beta}$
given by the bimodule $\hV^{\otimes \beta}$
without using affine highest weight categories
(Theorem~\ref{Thm:equiv}).                  
We would also remark that 
we do not use any results of \cite{KKK18}, \cite{KKK15} 
for our proofs.  

The present paper is organized as follows.
In Section \ref{Sec:HL}, 
we recall the definition of graded quiver varieties 
$\Mg_{\beta}$ and $\Mg_{0, \beta}$, and  
Hernandez-Leclerc\rq{}s
isomorphism $\Mg_{0, \beta} \cong E_{\beta}$.
In Section \ref{Sec:Conv}, we study 
the convolution algebra $\hK^{G_{\beta}}(\cZ_{\beta})_{\kk}$ 
(resp.~$\hK^{G_{\beta}}(\Zg_{\beta})_{\kk}$) and 
recall its relation to the quiver Hecke algebra $H_{Q}(\beta)$
(resp.~the quantum loop algebra $U_{q}(L\g)$).
In the final section \ref{Sec:SW}, we 
study the structure of the bimodule
$\hK^{G_{\beta}}(\Mg_{\beta} \times_{E_{\beta}} \F_{\beta})_{\kk}$.

While the author 
was writing this paper, 
there appeared a preprint by 
Oh-Scrimshaw \cite{OS18} in arXiv
which also proves Theorem~\ref{Thm:KKKconj} 
by a different approach. They
compute the denominators 
of the normalized $R$-matrices for type $\mathsf{E}$ 
explicitly with a computer.

\paragraph{Acknowledgment.}
The author 
thanks Syu Kato for helpful discussions and comments.
He also thanks Ryosuke Kodera
for suggesting him to study the geometric realization of
the generalized 
quantum affine Schur-Weyl duality. 


\paragraph{Convention.}	
An algebra $A$ is associative and unital.
We denote by $A^{\mathrm{op}}$ (resp.~$A^{\times}$)
the opposite algebra 
(resp.~the set of invertible elements)
of $A$ and
by $A \text{-}\mathrm{mod}$
the category of left $A$-modules.
Working over a base field $\mathbb{F}$, 
the symbol $\otimes$ (resp.~$\Hom$)
stands for $\otimes_{\mathbb{F}}$ (resp.~$\Hom_{\mathbb{F}}$) 
if there is no other clarification.
If $A$ is an $\mathbb{F}$-algebra,
we denote by $A \modfd$ the category
of finite-dimensional left $A$-modules.


\section{Hernandez-Leclerc\rq{}s isomorphism}
\label{Sec:HL}

\subsection{Notation}
\label{Ssec:Notation}
  
Throughout this paper,
we fix a finite-dimensional complex simple Lie algebra
$\g$ of type $\mathsf{ADE}$ and
a quiver $Q=(I, \Omega)$
whose underlying graph is the Dynkin diagram of $\g$,
where $I=\{ 1,2, \ldots,n \}$
(resp. $\Omega$) is the set of vertices (resp. arrows).
For an arrow $h \in \Omega$,
let  $h^{\prime}, h^{\pprime} \in I$ denote its origin and goal
respectively.
We write $i \sim j$ (resp.~$i \to j$) if there is an arrow 
$h \in \Omega$
such that $\{i,j\}=\{h^{\prime}, h^{\pprime}\}$
(resp.~$(i, j) = (h^{\prime}, h^{\pprime})$).
Then the Cartan matrix $(a_{ij})_{i,j \in I}$ of $\g$ is given by 
$$
a_{ij} = \begin{cases}
2 & \text{if $i=j$}; \\
-1 & \text{if $i \sim j$}; \\
0 & \text{otherwise}.
\end{cases}
$$
Let $\cP^{\vee} = \bigoplus_{i \in I} \Z h_{i}$ 
be the coroot lattice of $\g$.
The fundamental weights $\{ \varpi_{i}\}_{i \in I}$ 
form a basis of  the weight lattice 
$\cP = \Hom_{\Z}(\cP^{\vee}, \Z)$
which is dual to $\{h_{i} \}_{i \in I}$.
Let $\alpha_{i} = \sum_{j \in I} a_{ij} \varpi_{j}$ 
be the $i$-th simple root and 
$\cQ = \bigoplus_{i \in I} \Z \alpha_{i} \subset \cP$ 
be the root lattice.
We put $\cP^{+} = \sum_{i \in I}\Z_{\ge 0} \varpi_{i}$
and $\cQ^{+} = \sum_{i \in I} \Z_{\ge 0} \alpha_{i}$.
The Weyl group is the finite group $W$ of 
linear transformations on $\cP$ generated by 
the set $\{r_{i}\}_{ i \in I}$ of simple reflections, 
which are given by
$r_{i}(\lambda) := \lambda - \lambda(h_{i}) \alpha_{i}$
for $\lambda \in \cP$. 
The set $\cR^{+}$ of positive roots is defined by 
$\cR^{+} = (W\{ \alpha_{i}\}_{ i \in I}) \cap \cQ^{+}$.


\subsection{Representations of Dynkin quiver}
\label{Ssec:Rep_quiver}

For an element $\beta \in \cQ^{+}$,
we fix an $I$-graded $\C$-vector space 
$D=\bigoplus_{i \in I} D_{i}$ such that 
$\vdim D := \sum_{i \in I} (\dim D_{i}) \alpha_{i} = \beta$.
Let us consider the space 
$$
E_{\beta} := \bigoplus_{h \in \Omega}
\Hom(D_{h^{\prime}}, D_{h^{\pprime}}) 
$$
of representations of the quiver $Q$ 
of dimension vector $\beta$. 
On the space $E_{\beta}$,
the group $G_{\beta} := \prod_{i \in I}\mathop{GL}(D_{i})$
acts by conjugation. 
The set $G_{\beta} \backslash E_{\beta}$ of $G_{\beta}$-orbits
is  
naturally in bijection with the set of
isomorphism classes of representations of 
the quiver $Q$ of 
dimension vector $\beta$.  
By Gabriel\rq{}s theorem, 
for each $\alpha \in \cR^{+}$ 
there exists an indecomposable representation 
$M_\alpha$ such that $\vdim M_\alpha = \alpha$
uniquely up to isomorphism.
The correspondence
$\alpha \mapsto M_{\alpha}$
gives a bijection between the set $\cR^{+}$ 
of positive roots and
the set of isomorphism classes of indecomposable objects 
of the category $\Rep Q$ of 
finite-dimensional representations of $Q$.
Hence, the set
$$
\KP(\beta) :=
\left\{ (m_{\alpha}) 
\in (\Z_{\ge 0})^{\mathsf{R}^{+}}
\; \middle| \; \textstyle
\sum_{\alpha \in \cR^{+}} m_{\alpha} \alpha
 = \beta \right\}
$$
of Kostant partitions of $\beta$ labels 
the set of $G_{\beta}$-orbits:
$G_{\beta} \backslash E_{\beta} 
= \{ \mathbb{O}_{\mm} \}_{\mm \in \KP(\beta)}$,
where for each $\mm = (m_\alpha) \in \KP(\beta)$, 
the $G_{\beta}$-orbit $\mathbb{O}_{\mm}$ corresponds
to the isomorphism class of the representation
$\bigoplus_{\alpha \in \cR^{+}} M_{\alpha}^{\oplus m_{\alpha}}$.
We have the natural $G_{\beta}$-orbit stratification
\begin{equation}
\label{Eq:decE}
E_{\beta} = \bigsqcup_{\mm \in \KP(\beta)}
\mathbb{O}_{\mm}.
\end{equation}


\subsection{Repetition quiver}
\label{Ssec:Repet_quiver}

We fix a height function  
$\xi : I \to \Z ; i \mapsto \xi_{i}$ of 
the quiver $Q$ i.e.~it satisfies 
$\xi_{i} = \xi_{j} +1$ if $i \to j$.
Such a function $\xi$ is determined up to adding a constant.
Choose a total ordering $I=\{i_{1}, i_{2}, \ldots , i_{n} \}$
such that $\xi_{i_1} \ge \xi_{i_2} \ge \cdots \ge \xi_{i_n}$
and define the corresponding Coxeter element
$c := r_{i_1}r_{i_2} \cdots r_{i_n} \in W$.

The repetition quiver $\widehat{Q} = (\widehat{I}, \widehat{\Omega})$
is an infinite quiver 
defined by
\begin{align*}
\widehat{I} &:= \{(i,p) \in I \times \Z \mid p-\xi_{i} \in 2\Z \}, \\
\widehat{\Omega} &:= \{(i,p) \to (j, p+1) \mid 
(i, p), (j, p+1) \in \widehat{I}, \; i \sim j 
\}.
\end{align*}
It is well-known (cf.~\cite{Happel88}) that 
there exists an isomorphism 
$\phi$ from
the Auslander-Reiten quiver of the derived category 
$D^{b}(\Rep Q)$ to the repetition quiver 
$\widehat{Q}$, 
which depends on the choice of $\xi$
and is described as follows.
Since each indecomposable object of $D^{b}(\Rep Q)$ 
is isomorphic to a unique stalk complex $M_{\alpha}[k]$ 
for some $(\alpha, k) \in \cR^{+} \times \Z$,
we have a bijection between the sets of vertices 
$$
\cR^{+} \times \Z \ni (\alpha, k) \mapsto 
\phi(M_\alpha [k]) \in \widehat{I},
$$
which we denote by the same symbol $\phi$.
This bijection $\phi : \cR^{+} \times \Z \to \widehat{I}$
is determined inductively as follows:
\begin{itemize}
\item
For each $i \in I$, we put 
$\gamma_{i} := \sum_{j} \alpha_{j}$ where 
$j$ runs all the vertices $j \in I$ such that there is a path
in $Q$ from $j$ to $i$.
Then $M_{\gamma_{i}}$ is an injective hull 
of the $1$-dimensional representation $M_{\alpha_{i}}$.
We define $\phi (\gamma_{i}, 0) := (i, \xi_{i})$;
\item
Inductively, if $\phi(\alpha, k) = (i, p)$ 
for $(\alpha, k) \in \mathsf{R}^{+} \times \Z$, then we define as:
\begin{align*}
\phi(c^{\pm1}(\alpha), k) &:= (i, p\mp 2) 
& \text{if $c^{\pm 1}(\alpha) \in \mathsf{R}^{+}$}, \\
\phi(-c^{\pm1}(\alpha), k \mp 1) &:= (i, p\mp 2) 
& \text{if $c^{\pm 1}(\alpha) \in - \mathsf{R}^{+}$}. 
\end{align*}
\end{itemize}
In the followings, we only consider the restriction of 
the bijection $\phi$ on $\cR^{+} = \cR^{+} \times \{0\}$,
which we denote by the same symbol, i.e. we define
$\phi(\alpha) := \phi(\alpha, 0)$ for $\alpha \in \cR^{+}$.


\subsection{Graded quiver varieties}
\label{Ssec:Quiver_var}

In this subsection, 
we recall the definition of the graded quiver varieties. 
A basic reference is \cite{Nakajima01}.

For elements
$\nu = \sum_{i \in I} n_{i} \alpha_{i} \in \cQ^{+}$
and 
$\lambda = \sum_{i \in I} l_{i} \varpi_{i}
\in \cP^{+}$,
we fix $I$-graded $\C$-vector spaces
$
V = \bigoplus_{i \in I} V_{i},
W = \bigoplus_{i \in I} W_{i}
$
such that $\dim V_{i} = n_{i}, 
\dim W_{i} = l_{i}$ for each $i \in I$.
We form the following space of linear maps:
$$
\mathbf{M}(\nu, \lambda) :=
\left(
\bigoplus_{i \sim j} \Hom (V_{j}, V_{i}) \right)
\oplus
\left( \bigoplus_{i \in I} \Hom (W_{i}, V_{i}) \right)
\oplus
\left(
\bigoplus_{i \in I} \Hom (V_{i}, W_{i}) \right)
$$
On the $\C$-vector space $\mathbf{M}(\nu, \lambda)$,
the groups
$G(\nu) := \prod_{i \in I} \mathop{GL}(V_{i})$,
$G(\lambda) := \prod_{i \in I} \mathop{GL}(W_{i})$ 
act by conjugation and the $1$-dimensional torus
$\C^{\times}$ acts by the scalar multiplication. 
We write an element of $\mathbf{M}(\nu, \lambda)$ 
as a triple
$(B,a,b)$ of linear maps $B = \bigoplus B_{ij}$,
$a = \bigoplus a_{i}$ and $b = \bigoplus b_{i}$.
Let $\mu = \bigoplus_{i \in I} \mu_{i} 
: \mathbf{M}(\nu, \lambda) 
\to \bigoplus_{i \in I} \mathfrak{gl}(V_{i})$
be the map given by 
$$
\mu_{i}(B, a, b)
= a_{i} b_{i}
+\sum_{j \sim i} \varepsilon(i,j) B_{ij}B_{ji},
$$
where $\varepsilon(i,j) := 1$ (resp.~$-1$)
if $j \to i$ (resp.~$i \to j$).
A point $(B, a, b) \in \mu^{-1}(0)$ is said to be stable
if there exists no non-zero $I$-graded subspace 
$V^{\prime} \subset V$ such that 
$B(V^{\prime}) \subset V^{\prime}$ and 
$V^{\prime} \subset \Ker b$.
Let $\mu^{-1}(0)^{\mathrm{st}}$ be 
the set of stable points, on which $G(\nu)$ acts freely. 
Then we consider a set-theoretic quotient
$$\M(\nu, \lambda) := \mu^{-1}(0)^{\mathrm{st}} / G(\nu).$$
It is known that this quotient has a structure of
a non-singular quasi-projective variety which 
is isomorphic to a quotient in the geometric invariant theory.
We also consider the affine algebro-geometric
quotient
$$\M_{0}(\nu, \lambda) := \mu^{-1}(0)/\!/G(\nu) 
= \mathrm{Spec}\, \C[\mu^{-1}(0)]^{G(\nu)},$$
together with a canonical projective morphism
$\M(\nu, \lambda) 
\to \M_{0}(\nu, \lambda)$.
These quotients $\M(\nu, \lambda)$, $\M_{0}(\nu, \lambda)$
naturally inherit the actions of the group 
$\G(\lambda) := G(\lambda) \times \C^{\times}$,
which makes the canonical projective morphism into
a $\G(\lambda)$-equivariant morphism.  

For $\nu, \nu^{\prime} \in \cQ^{+}$ such that
$\nu^{\prime} - \nu \in \cQ^{+}$,  
there is a natural closed embedding
$\M_{0} (\nu , \lambda)
\hookrightarrow 
\M_{0}(\nu^{\prime},\lambda).
$
With respect to these embeddings, 
the family $\{ \M_{0}(\nu, \lambda)\}_{\nu \in \cQ^{+}}$
forms an inductive system, 
which stabilizes at some $\nu \in \cQ^{+}$.
We consider the union (inductive limit) and 
obtain the following combined 
$\G(\lambda)$-equivariant morphism: 
$$
\pi : \M(\lambda) 
:= \bigsqcup_{\nu} \M(\nu, \lambda)
\to 
\M_{0}(\lambda) 
:= \bigcup_{\nu} \M_{0}(\nu, \lambda).
$$  
We denote the fiber $\pi^{-1}(0)$ of the origin 
$0 \in \M_{0}(\lambda)$ by $\LL(\lambda)
= \bigsqcup_{\nu \in \cQ^{+}} \LL(\nu, \lambda)$.
Note that $\M(0,\lambda) = \LL(0, \lambda)$
consists of a single point.

Next we consider a free abelian monoid 
$\lP^{+} = \mathbb{Z}_{\ge 0} \widehat{I}$
with the free generating set $\widehat{I}$.
Define a homomorphism 
$\mathsf{cl} : \lP^{+} \to \cP^{+}$
by $\cl(i, p)=\varpi_{i}$.
For an element $\hlam = \sum
 l_{i, p} (i,p) \in \lP^{+}$ 
with $\cl(\hlam) = \lambda$, we fix a decomposition
$W_{i} = \bigoplus_{p} W_{i, p}$ such that
$\dim W_{i, p} = l_{i,p}$ for each $(i,p) \in \widehat{I}$.
Define a group homomorphism
$f_{i} : \C^{\times} \to
\prod_{p} GL(W_{i, p})
\subset 
GL(W_{i})$
by 
$f_{i} (t) |_{W_{i, p}} 
:= t^{p} \cdot \mathrm{id}_{W_{i, p}}$
for $t \in \C^{\times}$.
We put $T(\hlam) := (\prod_{i \in I}f_{i} \times \mathrm{id})
(\C^{\times})
\subset \G(\lambda)$
and consider the subvarieties of $T(\hlam)$-fixed points:
$$
\pi^{\bullet} := \pi^{T(\hlam)} : \Mg(\hlam) := \M(\lambda)^{T(\hlam)} 
\to \Mg_{0}(\hlam) := \M_{0}(\lambda)^{T(\hlam)}.
$$
We refer these varieties as the graded quiver varieties.
We put $\Lg(\hlam) := \LL(\lambda)^{T(\hlam)}
= (\pi^{\bullet})^{-1}(0)$.

The centralizer of $T(\hlam)$ inside $\G(\lambda)$
is 
$$\G(\hlam) \equiv
G(\hlam) \times \C^{\times} := 
\prod _{(i,p) \in \widehat{I}} \mathop{GL}(W_{i, p}) \times \C^{\times}
\subset \G(\lambda),$$ which naturally acts 
on the varieties $\Mg(\hlam)$, $\Mg_{0}(\hlam)$, $\Lg(\hlam)$. 
The morphism $\pi^{\bullet}$ is $\G(\hlam)$-equivariant. 


\subsection{Hernandez-Leclerc\rq{}s isomorphism}
\label{Ssec:HL}

Let $\lP_{0}^{+}
\subset \lP^{+}$ be the submonoid 
generated by the subset 
$\phi(\cR^{+}) \subset \widehat{I}$.
For an element 
$\beta := \sum_{i \in I} d_{i} \alpha_{i} \in \cQ^{+}$,
we define 
$\hlam_{\beta} := \sum_{i \in I} d_{i} \phi(\alpha_{i})
\in \lP^{+}_{0}$. 
In this case, 
we write $\pi_{\beta} : \Mg_{\beta} \to \Mg_{0, \beta}$
instead of $\pi^{\bullet} : \Mg(\hlam_{\beta}) \to 
\Mg_{0}(\hlam_{\beta})$ for simplicity.
For each $i \in I$,
we identify the vector space $D_{i}$ in 
Subsection \ref{Ssec:Rep_quiver} 
with the vector space $W_{\phi(\alpha_{i})}$
in Subsection \ref{Ssec:Quiver_var}. 
This induces the identification $G_{\beta} = G(\hlam_{\beta})$.
We write $\G_{\beta}$, $T_{\beta}$
instead of $\G(\hlam_\beta)$, $T(\hlam_{\beta})$ respectively. 
By the inclusion
$G_{\beta} = G_{\beta} \times \{ 1\}
\subset G_{\beta} \times \C^{\times} = \G_{\beta}$,
the group $G_{\beta}$ is regarded as a subgroup of 
the group $\G_{\beta}$. 
Then the
multiplication map $G_{\beta} \times T_{\beta} \to \G_{\beta}$
gives an isomorphism of algebraic groups 
\begin{equation}
\label{Eq:group}
G_{\beta} \times T_{\beta} \cong \G_{\beta}.
\end{equation} 
We equip an action of the group $\G_{\beta}$
on the space $E_{\beta}$ via the projection
$\G_{\beta} \cong G_{\beta} \times T_{\beta} 
\twoheadrightarrow G_{\beta}.$

\begin{Thm}[Hernandez-Leclerc~\cite{HL15} Theorem~9.11]
\label{Thm:HL}
There exists
a $\G_{\beta}$-equivariant
isomorphism of varieties
$$
\M_{0, \beta}^{\bullet}
\xrightarrow{\cong}
E_{\beta}.$$
\end{Thm}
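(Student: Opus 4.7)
The plan is to unpack the $T_\beta$-fixed point condition on $\M_0(\lambda)$ explicitly, produce a natural $G_\beta$-equivariant morphism $\Mg_{0,\beta} \to E_\beta$ by contracting the quiver data through the framing, and then construct its inverse using Gabriel's theorem and Auslander--Reiten theory.

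First I would analyze the $T_\beta$-fixed locus inside $\M_0(\lambda)$. Since $T_\beta$ acts on $W_i$ by weight $p$ on $W_{i,p}$, any $T_\beta$-fixed closed orbit $[(B,a,b)]$ admits a representative in which $V$ carries a compatible $\Z$-grading $V_i = \bigoplus_p V_{i,p}$ such that $a_i$ and $b_i$ preserve the grading while the arrow components of $B$ shift it by $\pm 1$. Since $\hlam_\beta = \sum_i d_i \phi(\alpha_i)$ is supported on $\widehat{I}$, a parity argument combined with the moment map relation confines the grading on $V$ to the parities dictated by $\widehat{I}$. Thus $\Mg_{0,\beta}$ breaks up as a disjoint union, indexed by $\widehat{I}$-graded refinements of $\dim V$, of affine quotients of $\mu^{-1}(0)^{T_\beta}$ by the graded gauge group $\prod_{(i,p) \in \widehat{I}} GL(V_{i,p})$.

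Next, I would produce the morphism $\Mg_{0,\beta} \to E_\beta$. For each arrow $h : i \to j$ of $Q$, the inductive definition of $\phi$ together with $\xi_i - \xi_j = 1$ pins down canonical paths in $\widehat{Q}$ between the vertices $\phi(\alpha_i)$ and $\phi(\alpha_j)$; the corresponding compositions $b_{\phi(\alpha_j)} \circ (\text{product of } B\text{-components}) \circ a_{\phi(\alpha_i)}$ are $T_\beta$-invariant regular functions on $\mu^{-1}(0)^{T_\beta}$ taking values in $\Hom(D_{h'}, D_{h^{\pprime}})$. Assembling these over all $h \in \Omega$ and descending to the affine quotient yields a $G_\beta$-equivariant morphism into $E_\beta$, which is $\G_\beta$-equivariant once the isomorphism $\G_\beta \cong G_\beta \times T_\beta$ is used (the $T_\beta$-factor acting trivially on both sides).

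For the inverse, I would invoke Gabriel's theorem to decompose any $x \in E_\beta$ as $\bigoplus_{\alpha \in \cR^+} M_\alpha^{\oplus m_\alpha}$ with $(m_\alpha) \in \KP(\beta)$. Each indecomposable $M_\alpha$ has a minimal injective coresolution in $\Rep Q$ whose homogeneous pieces, read off via the $\phi$-bijection with the AR quiver of $D^b(\Rep Q)$, determine an $\widehat{I}$-graded dimension vector for $V$ together with canonical structure maps $(B,a,b)$ solving $\mu = 0$. The resulting closed $G_\beta$-orbit defines a point of $\Mg_{0,\beta}$, and tracking functoriality across the stratification \eqref{Eq:decE} should yield an algebraic map $E_\beta \to \Mg_{0,\beta}$ inverse to the previous one. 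The main obstacle will be this last step: an injective coresolution is canonical only up to non-unique isomorphism, so to upgrade the pointwise inverse to a morphism of varieties one must either work relatively in families of $Q$-representations, or carry out an invariant-theoretic comparison showing that $\C[\mu^{-1}(0)^{T_\beta}]^{\prod GL(V_{i,p})}$ is generated precisely by the trace-of-path functions pulled back from $\C[E_\beta]$. Granting this, the $\G_\beta$-equivariance and the bijection on closed points will fall out directly from the construction.
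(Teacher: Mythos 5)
The paper you are reading does not prove this statement: it is cited directly from Hernandez--Leclerc, Theorem~9.11, and used as a black box throughout. There is therefore no proof in the paper against which to compare your sketch; the author relies on the external reference.

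Taken on its own terms, the first two stages of your sketch are plausible and consistent with how one would expect such an argument to go: the $T_\beta$-fixed locus of $\mu^{-1}(0)$ does decompose according to $\widehat{I}$-graded refinements of the gauge space, and compositions $b \circ B \cdots B \circ a$ along suitable paths in $\widehat{Q}$ do give $T_\beta$-invariant regular functions landing in the arrow spaces $\Hom(D_{h'}, D_{h^{\pprime}})$. The heart of the theorem, however, is that the resulting morphism $\Mg_{0,\beta} \to E_\beta$ is an isomorphism, and that is exactly where your proposal leaves a gap which you yourself flag. Constructing the inverse pointwise via minimal injective coresolutions of $\bigoplus_\alpha M_\alpha^{\oplus m_\alpha}$ does not automatically yield a morphism of schemes: such a coresolution is canonical only up to non-unique isomorphism, and the assignment is not manifestly algebraic in $x$. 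Of your two proposed remedies, the invariant-theoretic one --- showing that $\C[\mu^{-1}(0)^{T_\beta}]^{\prod GL(V_{i,p})}$ is generated by the trace-of-path functions pulled back from $\C[E_\beta]$ --- is the structurally correct line of attack, but it is a genuine computation that depends delicately on the particular shape of the framing $\hlam_\beta = \sum_i d_i \phi(\alpha_i)$ and on which closed cycles in the doubled graded quiver die modulo the moment-map ideal; you do not carry it out. Until that step is completed, the sketch is a credible plan rather than a proof of the theorem.
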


Henceforth, we identify the graded quiver variety  
$\Mg_{0, \beta}$ with the space $E_{\beta}$
under the isomorphism in Theorem~\ref{Thm:HL}.

We recall some properties of fibers of the $\G_{\beta}$-equivariant morphism 
$\pi_{\beta}:\Mg_{\beta} \to E_{\beta}$.
By the injective map 
$$\KP(\beta) \ni (m_{\alpha}) \mapsto 
\sum_{\alpha} m_{\alpha} \phi(\alpha) \in \lP^{+}_{0}, $$
we regard $\KP(\beta)$ as a subset of $\lP^{+}_{0}$.
Then we have a disjoint union decomposition
$$
\lP^{+}_{0} = \bigsqcup_{\beta \in \cQ^{+}} \KP(\beta).
$$

\begin{Prop}[cf.~\cite{Fujita17} Section 3]
\label{Prop:fiber}
Let $\mm \in \KP(\beta)$ and 
pick a point
$x_{\mm} \in \mathbb{O}_{\mm}$.
\begin{enumerate}
\item \label{Prop:fiber:isom}
We have an isomorphism 
$\pi_{\beta}^{-1}(x_{\mm}) \cong \Lg(\mm)$.
\item \label{Prop:fiber:stab}
The maximal reductive quotient 
of the stabilizer $\Stab_{G_{\beta}}(x_{\mm})
\subset G_{\beta}$
of the point $x_{\mm}$ is isomorphic to 
$G(\mm)$.
\item 
The isomorphism in (\ref{Prop:fiber:isom}) induces 
the following commutative diagram:
$$
\xy
\xymatrix{
\Aut(\pi_{\beta}^{-1}(x_{\mm})) 
\ar[r]^-{\cong}
& 
\Aut(\Lg(\mm)) 
\\
\Stab_{G_\beta}(x_{\mm})
\ar[u]
\ar@{->>}[r]
&
G(\mm),
\ar[u]
}
\endxy
$$
where the vertical arrows are the action maps and the
lower horizontal arrow is the canonical quotient map in 
(\ref{Prop:fiber:stab}).  
\end{enumerate}
\end{Prop}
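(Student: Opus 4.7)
The plan is to combine Nakajima's transverse slice theorem for graded quiver varieties with the Hernandez-Leclerc identification $\Mg_{0, \beta} \cong E_\beta$ of Theorem~\ref{Thm:HL}, reducing the study of $\pi_\beta$ near $x_\mm$ to the study of $\pi_\mm^{\bullet}: \Mg(\mm) \to \Mg_{0}(\mm)$ near the origin. The heuristic is that Nakajima's natural stratification of $\Mg_{0, \beta}$ by local singularity type should agree under HL with the orbit decomposition \eqref{Eq:decE}, and the stratum through $x_\mm$ should be labelled by $\mm$ itself (now regarded as an element of $\lP^{+}_{0}$).

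For part~\ref{Prop:fiber:isom}, I would fix $x_\mm \in \mathbb{O}_\mm$ and use Theorem~\ref{Thm:HL} to identify it with the semisimple $Q$-representation $M_\mm := \bigoplus_{\alpha \in \cR^{+}} M_\alpha^{\oplus m_\alpha}$. After verifying that the Nakajima stratum through $x_\mm$ matches $\mathbb{O}_\mm$ and that the associated transverse-slice datum equals $\mm$, Nakajima's slice theorem furnishes a $\Stab_{G_\beta}(x_\mm)$-equivariant étale (or analytic) neighborhood $U \ni x_\mm$ on which $\pi_\beta$ is isomorphic, up to a smooth product factor, to $\pi_\mm^{\bullet}$ on a neighborhood of $0 \in \Mg_{0}(\mm)$. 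Restricting to the respective fibers yields $\pi_\beta^{-1}(x_\mm) \cong (\pi_\mm^{\bullet})^{-1}(0) = \Lg(\mm)$.

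For part~\ref{Prop:fiber:stab}, I would identify $\Stab_{G_\beta}(x_\mm)$ with $\Aut_{\Rep Q}(M_\mm) = \End_{\Rep Q}(M_\mm)^{\times}$ via HL. Since $Q$ is of Dynkin type $\mathsf{ADE}$, every indecomposable $M_\alpha$ is a brick, so the semisimple quotient of $\End_{\Rep Q}(M_\mm)$ is $\bigoplus_{\alpha \in \cR^{+}} M_{m_\alpha}(\C)$, and the maximal reductive quotient of $\Aut_{\Rep Q}(M_\mm)$ is $\prod_{\alpha \in \cR^{+}} GL_{m_\alpha}(\C) = G(\mm)$. Part~(3) then follows from the $\Stab_{G_\beta}(x_\mm)$-equivariance of the slice isomorphism of the previous paragraph: the unipotent radical acts trivially along the transverse slice and hence on $\Lg(\mm)$, while the reductive quotient $G(\mm)$ acts through its tautological action on $\Mg(\mm)$. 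The main obstacle is the explicit matching of slice data with $\mm$, namely, the identification of Nakajima's transverse slice at $x_\mm$ with $\Mg(\mm)$ as defined via $\hlam_\mm = \sum_{\alpha} m_\alpha \phi(\alpha)$, rather than with some merely abstractly equivalent graded quiver variety; this requires tracing through the Auslander-Reiten data of $M_\mm$ and using that the bijection $\phi$ of Subsection~\ref{Ssec:Repet_quiver} correctly records the cohomological shifts at each indecomposable summand, as carried out in \cite{Fujita17} Section~3.
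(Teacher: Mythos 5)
The paper offers no proof of this proposition; it cites \cite[Section~3]{Fujita17}, so there is no in-text argument for you to have matched. Your sketch is consistent with the cited approach: Hernandez--Leclerc's Theorem~\ref{Thm:HL} identifies $E_\beta$ with $\Mg_{0,\beta}$, Nakajima's transversal slice theorem for (graded) quiver varieties then reduces the local structure of $\pi_\beta$ over $x_\mm$ to $\pi^{\bullet}$ for a smaller graded quiver variety near the origin, and the only real content is pinning down that the slice datum equals $\hlam_\mm = \sum_{\alpha} m_\alpha\phi(\alpha)$ rather than some abstractly equivalent datum, which you correctly single out as the crux.

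Two small points of hygiene. First, in part~\ref{Prop:fiber:stab} you invoke that each $M_\alpha$ is a brick, but to identify the radical you also need directedness of the $\Hom$-spaces between nonisomorphic indecomposables of $\Rep Q$ (no cycles of nonzero nonisomorphisms), which is true for Dynkin $Q$ but should be stated rather than left implicit. Second, in part~(3) the slogan \emph{the unipotent radical acts trivially along the transverse slice} is stated as if it were obvious; what actually underwrites the commutativity of the diagram is that Nakajima's slice identification is constructed so that the $\Stab_{G_\beta}(x_\mm)$-action on the local model factors through $G(\mm)$ by design, not a general Luna-type equivariance claim about an arbitrary étale chart. Your conclusion is right, but the justification deserves a sentence appealing to the specific structure of Nakajima's slice rather than to equivariance alone.
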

\section{Convolution and geometric extension algebras}
\label{Sec:Conv}

Let $\kk$ be a field of characteristic zero.
Later in Subsection \ref{Ssec:Nakajima}, we specialize $\kk = \Q(q)$.

\subsection{Preliminary on equivariant geometry}
\label{Ssec:Pre}

For the materials in this subsection,
we refer \cite{CG97} and \cite{EG00}.  

Let $G$ be a complex linear algebraic group.
A $G$-variety $X$ is a quasi-projective 
complex algebraic variety
equipped with an algebraic action of the group $G$.  
We set $\mathrm{pt} := \mathop{\mathrm{Spec}} \C$ 
with the trivial $G$-action.
The equivariant $K$-group $K^{G}(X)$ is 
defined to be the Grothendieck group 
of the abelian category
of $G$-equivariant coherent sheaves on $X$
which is a module over the
representation ring
$R(G)=K^{G}(\mathrm{pt})$.
We put
$$K^{G}(X)_{\kk} := K^{G}(X)\otimes_{\Z}\kk, \quad
R(G)_{\kk} := R(G)\otimes_{\Z}\kk.$$
Let $I \subset R(G)_{\kk}$ be the augmentation ideal,
i.e.~the ideal 
generated by 
virtual representations of dimension $0$.
We define the $I$-adic completions by
$$
\hK^{G}(X)_{\kk} := 
\varprojlim_{k} K^{G}(X)_{\kk}/I^{k} K^{G}(X)_{\kk}, 
\quad
\hR(G)_{\kk}:= \varprojlim_{k} R(G)_{\kk} / I^{k}.
$$ 
The completed 
$K$-group $\hK^{G}(X)_{\kk}$
is a module over the algebra $\hR(G)_{\kk}$.

On the other hand, 
the $G$-equivariant Borel-Moore homology
with $\kk$-coefficients
$$
H^{G}_{*}(X, \kk) = \bigoplus_{k \in \Z}H_{k}^{G}(X, \kk),
$$
is a module over the $G$-equivariant 
cohomology ring 
$H_{G}^{*}(\mathrm{pt}, \kk)$
of $\mathrm{pt}$ (with the cup product).
Let us define the completion of
a $\Z$-graded $\kk$-vector space
$V=\bigoplus_{k \in \Z}V_{k}$
by 
$
V^{\wedge} := \prod_{k \in \Z} V_{k}.
$
The completion $H^{*}_{G}(\mathrm{pt}, \kk)^{\wedge}$
naturally becomes a $\kk$-algebra and 
the completion $H^{G}_{*}(X, \kk)^{\wedge}$
becomes 
a module over $H^{*}_{G}(\mathrm{pt}, \kk)^{\wedge}$.  

Assume that our $G$-variety $X$ is a 
$G$-stable
closed subvariety of 
a non-singular ambient 
$G$-variety $M$.
Then we have the 
$G$-equivariant local Chern character map
$$
(\ch^{G})^{M}_{X}
: \hK^{G}(X)_{\kk} \to H^{G}_{*}(X, \kk)^{\wedge}.
$$
relative to $M$.
We simply write $\ch^{G}$ instead of $(\ch^{G})^{M}_{X}$
if the pair $(M, X)$ is obvious
from the context.
When $X=M=\mathrm{pt}$, the corresponding
Chern character map induces an
isomorphism
of $\kk$-algebras
$$
\hR(G)_{\kk}
=\hK^{G}(\mathrm{pt})_{\kk}
\cong H^{G}_{*}(\mathrm{pt}, \kk)^{\wedge}
= H_{G}^{*}(\mathrm{pt}, \kk)^{\wedge}.
$$
We identify
$
H^{G}_{*}(\mathrm{pt}, \kk)^{\wedge}
$
with
$\hR(G)_{\kk}$ 
via this isomorphism.
Then $(\ch^{G})^{M}_{X}$ is regarded as
an $\hR(G)_{\kk}$-homomorphism.

For a $G$-equivariant vector bundle $E$ on 
a non-singular $M$,
let $\Td^{G}(E) \in H^{*}_{G}(M, \kk)^{\wedge}$ be the
$G$-equivariant Todd class. 
This is an invertible element with respect to 
the cup product.
For the tangent bundle $T_{M}$ of $M$,
we put $\Td^{G}_{M} := \Td^{G}(T_{M})$.

\begin{Thm}[Equivariant Riemann-Roch \cite{EG00}]
\label{Thm:EG}
For $i=1,2$, let $X_{i}$ be a $G$-variety
which is a $G$-stable closed subvariety of 
a non-singular ambient $G$-variety $M_{i}$.
Assume that a $G$-equivariant morphism
$\tilde{f}:M_{1}\to M_{2}$ restricts to a proper morphism
$f : X_{1} \to X_{2}$. 
Then we have
$$
f_{*} \left(\Td_{M_{1}}^{G} \cdot
(\ch^{G})_{X_{1}}^{M_{1}}(\zeta) \right)
= \Td^{G}_{M_{2}} \cdot
(\ch^{G})_{X_{2}}^{M_{2}}(f_{*} \zeta),
\quad
\zeta \in \hK^{G}(X_{1})_{\kk}.$$
\end{Thm}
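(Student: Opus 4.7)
The plan is to reduce the equivariant assertion to the classical (non-equivariant) Grothendieck-Riemann-Roch theorem for local Chern characters (Baum-Fulton-MacPherson; cf.\ Fulton, \emph{Intersection Theory}, Theorem~18.2) by means of the Borel-type approximation of equivariant cohomology and $K$-theory developed by Totaro and Edidin-Graham. Concretely, for each large integer $N$ I would fix a finite-dimensional $G$-representation $V_{N}$ and a $G$-stable open subset $U_{N}\subset V_{N}$ on which $G$ acts freely and whose complement has codimension $\ge N$. Since the $M_{i}$ are quasi-projective and carry $G$-linearizations, the mixed quotients $\widetilde{M}_{i,N} := M_{i}\times^{G}U_{N}$ and $\widetilde{X}_{i,N} := X_{i}\times^{G}U_{N}$ are quasi-projective varieties with $\widetilde{M}_{i,N}$ smooth and $\widetilde{X}_{i,N}$ closed in $\widetilde{M}_{i,N}$. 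The morphism $\tilde f$ descends to $\tilde f_{N}:\widetilde{M}_{1,N}\to\widetilde{M}_{2,N}$ and restricts to a proper morphism $f_{N}:\widetilde{X}_{1,N}\to\widetilde{X}_{2,N}$.

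Next, I would apply the non-equivariant local Riemann-Roch theorem to $f_{N}$, obtaining
\[
(f_{N})_{*}\bigl(\Td(T_{\widetilde{M}_{1,N}})\cdot \ch^{\widetilde{M}_{1,N}}_{\widetilde{X}_{1,N}}(\zeta_{N})\bigr)
= \Td(T_{\widetilde{M}_{2,N}})\cdot \ch^{\widetilde{M}_{2,N}}_{\widetilde{X}_{2,N}}\bigl((f_{N})_{*}\zeta_{N}\bigr)
\]
for every class $\zeta_{N}$ in the ordinary $K$-group of $\widetilde{X}_{1,N}$. The short exact sequence $0\to T_{M_{i}}\times^{G}U_{N}\to T_{\widetilde{M}_{i,N}}\to q_{N}^{*}T_{U_{N}/G}\to 0$, where $q_{N}:\widetilde{M}_{i,N}\to U_{N}/G$ is the projection, factors the Todd class as $\Td(T_{\widetilde{M}_{i,N}}) = (\Td_{M_{i}}^{G})_{N}\cdot q_{N}^{*}\Td(T_{U_{N}/G})$, where $(\Td^{G}_{M_{i}})_{N}$ is the finite-dimensional approximation of the equivariant Todd class. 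The last factor is pulled back from the common base $U_{N}/G$ (the same on both sides through $\tilde f_{N}$), so by the projection formula it can be cancelled.

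Finally, I would pass to the limit. Edidin-Graham's framework identifies $H^{G}_{k}(X_{i},\kk)$ with $H_{k+2d_{N}}(\widetilde{X}_{i,N},\kk)$ for $d_{N}:=\dim U_{N}/G$ and $N\gg k$, while the completed equivariant $K$-group $\hK^{G}(X_{i})_{\kk}$ is recovered from the ordinary $K$-groups of the $\widetilde{X}_{i,N}$ through the $I$-adic filtration; under these identifications the non-equivariant local Chern characters, Todd classes, and proper pushforwards assemble into their equivariant counterparts, so the family of identities from Step~2 collapses to the desired formula in the completion. The main obstacle is precisely this last step: verifying the compatibility of the local Chern character with the stabilization maps between different approximations and matching the $I$-adic completion on the $K$-theory side to the degree-wise completion $H^{G}_{*}(\mathrm{pt},\kk)^{\wedge}$ on the Borel-Moore side. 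This matching is the heart of the Edidin-Graham theorem; once it is granted, the cancellation of the $U_{N}/G$-factor in Step~2 and the assembly of the limit are formal consequences of functoriality.
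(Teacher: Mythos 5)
The paper does not actually prove this theorem; it states it with a citation to Edidin–Graham \cite{EG00}, where it is a (re)formulation of their equivariant Riemann--Roch theorem combined with the Baum--Fulton--MacPherson local Riemann--Roch for singular varieties. Your proposal reconstructs precisely the strategy of \cite{EG00}: approximate $BG$ by mixed quotients $U_N/G$, apply the non-equivariant localized Grothendieck--Riemann--Roch (Fulton, \emph{Intersection Theory}, \S18.1 and Thm.~18.3, rather than Thm.~18.2, which is the smooth-proper case) to the induced morphism of mixed spaces, factor $\Td(T_{\widetilde M_{i,N}})$ through the exact sequence relating the vertical tangent bundle to the pullback of $T_{U_N/G}$, cancel the common base factor by the projection formula and invertibility of Todd classes, and then pass to the limit. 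That outline is correct, and you are right that the genuinely nontrivial step is the last one: identifying the $I$-adic completion of $K^G(X)_\kk$ with the inverse limit of ordinary $K$-groups of the $\widetilde X_{i,N}$ and matching it to the degree-wise completion $H^G_*(X,\kk)^\wedge$ compatibly with pushforward, stabilization in $N$, and the local Chern character. That compatibility is exactly the content of \cite{EG00}, so deferring it there is appropriate, but it means your argument is a gloss on the cited proof rather than an independent one. One small point to add for completeness: to legitimize the cancellation of $q_N^*\Td(T_{U_N/G})$ you need $\tilde f_N$ to be a morphism over $U_N/G$ (so that $\tilde f_N^* q_{2,N}^* = q_{1,N}^*$), which holds by construction of the mixed quotient, and you need the Todd class to be invertible in the truncated cohomology ring of the finite-dimensional approximation, which holds since its leading term is $1$. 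Neither point is an obstacle, but both should be said explicitly.
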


The following proposition is standard.

\begin{Prop}
\label{Prop:chex}
Let $M$ be a non-singular $G$-variety.
Let $Y \subset X \subset M$ be $G$-stable closed subvarieties,
and $i : Y \hookrightarrow X$, 
$j : X\setminus Y \hookrightarrow X$
be inclusions.
Then 
we have the following commutative diagram:
$$
\xy
\xymatrix{
\hK^{G}(Y)_{\kk}
\ar[r]^-{i_{*}}
\ar[d]^-{(\ch^{G})^{M}_{Y}}
& 
\hK^{G}(X)_{\kk}
\ar[r]^-{j^{*}} 
\ar[d]^-{(\ch^{G})^{M}_{X}}
&
\hK^{G}(X\setminus Y)_{\kk}
\ar[d]^-{(\ch^{G})^{M\setminus Y}_{X\setminus Y}}
\\
H_{*}^{G}(Y, \kk)^{\wedge}
\ar[r]^-{i_{*}}
&
H_{*}^{G}(X, \kk)^{\wedge}
\ar[r]^-{j^{*}}
&
H_{*}^{G}(X\setminus Y, \kk)^{\wedge}.
}
\endxy
$$
\end{Prop}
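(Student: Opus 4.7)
The plan is to verify the commutativity of the two squares separately, each by invoking a standard property of the equivariant local Chern character: compatibility with proper pushforward for the left square, and naturality under open restriction for the right square. Throughout, I work first at the level of uncompleted equivariant $K$-theory and Borel--Moore homology, and note that everything passes to the $I$-adic completions because the maps involved are $R(G)_{\kk}$-linear.

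For the left square, my approach is to apply the equivariant Riemann--Roch theorem (Theorem~\ref{Thm:EG}) with $\widetilde{f} = \mathrm{id}_{M}$ and $f = i : Y \hookrightarrow X$, so that the ambient smooth varieties on both sides coincide with $M$. The two Todd classes appearing in Riemann--Roch then both equal $\Td^{G}_{M}$, yielding
\[
i_{*}\bigl(\Td^{G}_{M} \cdot (\ch^{G})^{M}_{Y}(\zeta)\bigr) = \Td^{G}_{M} \cdot (\ch^{G})^{M}_{X}(i_{*}\zeta).
\]
The projection formula for the proper pushforward $i_{*}$ on equivariant Borel--Moore homology (applied through the composite inclusions $Y \hookrightarrow X \hookrightarrow M$) allows $\Td^{G}_{M}$ to be pulled outside $i_{*}$ on the left-hand side, and since $\Td^{G}_{M}$ is invertible with respect to the cup product it may be cancelled, giving exactly $i_{*} \circ (\ch^{G})^{M}_{Y} = (\ch^{G})^{M}_{X} \circ i_{*}$.

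For the right square, my approach is to realise a class in $\hK^{G}(X)_{\kk}$ by a $G$-equivariant coherent sheaf $\mathcal{F}$ on $X$ together with a finite resolution $E^{\bullet} \to (i_{X,M})_{*}\mathcal{F}$ by $G$-equivariant vector bundles on the smooth ambient variety $M$. The local Chern character $(\ch^{G})^{M}_{X}[\mathcal{F}]$ is then computed as the Chern character with supports of the complex $E^{\bullet}$, which is exact off $X$. Restricting this resolution along the open embedding $\widetilde{j}: M \setminus Y \hookrightarrow M$ yields a resolution of $(i_{X \setminus Y, M \setminus Y})_{*}(j^{*}\mathcal{F})$ on $M \setminus Y$, and the Chern-character-with-supports construction commutes tautologically with such flat restriction. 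This is precisely the commutativity of the right square, modulo the (automatic) passage to completions.

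The main issue, rather than a genuine obstacle, lies in the bookkeeping for the right square: one must keep careful track of the shift of ambient smooth variety from $M$ to $M \setminus Y$ when comparing $(\ch^{G})^{M}_{X}$ with $(\ch^{G})^{M \setminus Y}_{X \setminus Y}$, and verify that the resolutions used to define the two local Chern characters can indeed be taken compatibly via restriction. Both underlying ingredients (equivariant Riemann--Roch, and the naturality of Chern characters with supports under open pullback) are standard and documented in \cite{CG97} and \cite{EG00}, so the proposition ultimately functions as a bookkeeping device packaging these compatibilities in the exact form needed for later comparisons of convolution structures in $K$-theory and in Borel--Moore homology.
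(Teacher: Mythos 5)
Your proposal is correct, and it supplies exactly the standard argument that the paper deliberately omits: the paper asserts Proposition~\ref{Prop:chex} with only the remark ``the following proposition is standard'' and gives no proof at all, so there is nothing to compare against.

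Both halves of your argument check out. For the left square, applying Theorem~\ref{Thm:EG} with $M_{1}=M_{2}=M$, $\widetilde f=\mathrm{id}_{M}$, and $f=i$ (a closed, hence proper, immersion) yields
$i_{*}\bigl(\Td^{G}_{M}\cdot(\ch^{G})^{M}_{Y}(\zeta)\bigr)=\Td^{G}_{M}\cdot(\ch^{G})^{M}_{X}(i_{*}\zeta)$;
the projection formula lets you pull the (restriction of the) invertible class $\Td^{G}_{M}$ across $i_{*}$ and cancel it, giving the commutativity on the nose. Note this is not circular: the equivariant Riemann--Roch theorem of \cite{EG00} is established independently of the present compatibility. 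For the right square, representing a class by a bounded resolution $E^{\bullet}\to (i_{X,M})_{*}\mathcal F$ of $G$-equivariant vector bundles on $M$ (available in the quasi-projective equivariant setting the paper works in) and using that the localized Chern character with supports commutes with flat, in particular open, base change is the standard argument from \cite[Ch.~5]{CG97}. The only point worth being explicit about, which you flag, is that the ambient smooth variety changes from $M$ to $M\setminus Y$ on the right; this is harmless precisely because the resolution restricts to a resolution. Passing to $I$-adic completions is automatic since all maps are $R(G)_{\kk}$-linear. In short, the proof is complete and is the one a reader of \cite{CG97,EG00} would supply.
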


Next we consider the convolution products.
Let $M_i$ be non-singular $G$-varieties for $i=1,2,3$.
We denote by $p_{ij} : M_1 \times M_2 \times M_3
\to M_i \times M_j$ the projection to the $(i,j)$-factors
for $(i,j) = (1,2), (2,3), (1,3)$.
Let $Z_{12} \subset M_{1} \times M_{2}$ and
$Z_{23} \subset M_{2} \times M_{3}$ be 
$G$-stable closed subvarieties such that
the morphism 
$$
p_{13} : p_{12}^{-1}(Z_{12}) \cap p_{23}^{-1}(Z_{23}) 
\to Z_{13} 
:= p_{13}(p_{12}^{-1}(Z_{12}) \cap p_{23}^{-1}(Z_{23}) )
$$
is proper.
Then we define the convolution product
$
* : K^{G}(Z_{12}) \otimes_{R(G)} K^{G}(Z_{23}) 
\to K^{G}(Z_{13})
$
relative to $M_{1} \times M_{2} \times M_{3}$ by
$$
\zeta * \eta := p_{13*} (p_{12}^{*}\zeta
\otimes^{\mathbb{L}}_{M_{1} \times M_{2} \times M_{3}} p_{23}^{*}\eta ),
\quad
\zeta \in K^{G}(Z_{12}), \eta \in K^{G}(Z_{23}).
$$
This naturally induces the convolution product
on the completed $G$-equivariant $K$-groups
$\hK^{G}(Z_{12})_{\kk} \otimes_{\hR(G)_{\kk}}
\hK^{G}(Z_{23})_{\kk} \to \hK^{G}(Z_{13})_{\kk}
$.
Similarly, we have the convolution product 
on the $G$-equivariant Borel-Moore homologies
$H_{*}^{G}(Z_{12}, \kk) \otimes_{H^{*}_{G}(\mathrm{pt}, \kk)}
H_{*}^{G}(Z_{23}, \kk) \to H^{G}_{*}(Z_{13}, \kk)$
relative to $M_{1} \times M_{2} \times M_{3}$
and its completed version
$H_{*}^{G}(Z_{12}, \kk)^{\wedge} \otimes_{\hR(G)_{\kk}}
H_{*}^{G}(Z_{23}, \kk)^{\wedge} 
\to H_{*}^{G}(Z_{13}, \kk)^{\wedge}.
$ 

Under the situation in the previous paragraph,
for each $(i,j)= (1,2), (2,3), (1,3)$, 
we also define the $G$-equivariant
Riemann-Roch homomorphism
$\RR^{G} : \hK^{G}(Z_{ij})_{\kk} \to 
H_{*}^{G}(Z_{ij}, \kk)^{\wedge}$
relative to $M_{i} \times M_{j}$
by 
$$
\RR^{G}(\zeta) := 
(p_{i}^{*} \Td^{G}_{M_{i}}) \cdot 
(\ch^{G})_{Z_{ij}}^{M_{i} \times M_{j}} (\zeta), \quad
\zeta \in \hK^{G}(Z_{ij})_{\kk},
$$
where $p_{i} : M_{i} \times M_{j} \to M_{i}$
is the projection.
By a completely similar discussion as in \cite[5.11.11]{CG97},
we can prove the following.
\begin{Prop}
\label{Prop:RR}
The $G$-equivariant Riemann-Roch homomorphisms 
are compatible with the convolution product, i.e.~we have
$$
\RR^{G}(\zeta * \eta) = \RR^{G}(\zeta) 
* \RR^{G}(\eta), \quad
\zeta \in \hK^{G}(Z_{12})_{\kk}, 
\eta \in \hK^{G}(Z_{23})_{\kk}.
$$ 
\end{Prop}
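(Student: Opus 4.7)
The plan is to unfold both sides via the definitions, reduce the problem to a single application of the equivariant Riemann--Roch theorem (Theorem~\ref{Thm:EG}) for the proper morphism
\[
p_{13} : p_{12}^{-1}(Z_{12}) \cap p_{23}^{-1}(Z_{23}) \to Z_{13}
\]
between the non-singular ambient $G$-varieties $M_1 \times M_2 \times M_3$ and $M_1 \times M_3$, and then close the argument using the projection formula together with the Whitney multiplicativity of the Todd class on products. Throughout, write $q_i : M_1 \times M_2 \times M_3 \to M_i$ and $p_i^{(ij)} : M_i \times M_j \to M_i$ for the projections, so that $q_i = p_i^{(ij)} \circ p_{ij}$ whenever the composition makes sense.

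First I would expand $\RR^G(\zeta) * \RR^G(\eta)$ according to the definitions. Since $p_{12}$ and $p_{23}$ are smooth, their pullbacks commute with $\ch^G$ and are ring homomorphisms; combined with the identity $p_{ij}^* \circ p_i^{(ij)*} = q_i^*$ on the Todd factors and the multiplicativity of $\ch^G$ under derived tensor product inside the smooth ambient $M_1 \times M_2 \times M_3$, one obtains
\[
\RR^G(\zeta) * \RR^G(\eta) = p_{13*}\bigl(q_1^*\Td^G_{M_1} \cdot q_2^*\Td^G_{M_2} \cdot \ch^G(p_{12}^*\zeta \otimes^{\mathbb{L}} p_{23}^*\eta)\bigr).
\]

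Next I would apply Theorem~\ref{Thm:EG} to the proper morphism $p_{13}$. Using the Whitney product formulas $\Td^G_{M_1 \times M_2 \times M_3} = q_1^*\Td^G_{M_1} \cdot q_2^*\Td^G_{M_2} \cdot q_3^*\Td^G_{M_3}$ and $\Td^G_{M_1 \times M_3} = p_1^{(13)*}\Td^G_{M_1} \cdot p_3^{(13)*}\Td^G_{M_3}$, together with the projection formula applied to $q_3 = p_3^{(13)} \circ p_{13}$, the equivariant Riemann--Roch identity for $p_{13}$ becomes
\[
p_3^{(13)*}\Td^G_{M_3} \cdot p_{13*}\bigl(q_1^*\Td^G_{M_1} \cdot q_2^*\Td^G_{M_2} \cdot \ch^G(p_{12}^*\zeta \otimes^{\mathbb{L}} p_{23}^*\eta)\bigr) = p_1^{(13)*}\Td^G_{M_1} \cdot p_3^{(13)*}\Td^G_{M_3} \cdot \ch^G(\zeta * \eta).
\]
Since $p_3^{(13)*}\Td^G_{M_3}$ is invertible with respect to the cup product, cancelling it identifies the left-hand side displayed above with $\RR^G(\zeta * \eta)$ and finishes the argument.

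The proof is essentially formal bookkeeping once Theorem~\ref{Thm:EG} and the standard multiplicativity of the local Chern character with respect to flat pullback and derived tensor product (compare Proposition~\ref{Prop:chex} and \cite[5.11.11]{CG97}) are taken for granted. The only point requiring mild care---and thus the expected main obstacle---is verifying that these compatibilities persist after passage to the $I$-adic completions $\hK^G(-)_\kk$ and $H^G_*(-,\kk)^\wedge$; this introduces no new geometric content and reduces to the observation that the relevant $R(G)_\kk$-modules are finitely generated, so completion behaves well with the operations involved.
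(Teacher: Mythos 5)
Your proof is correct and follows exactly the route the paper intends: the paper itself gives no explicit argument and simply invokes ``a completely similar discussion as in \cite[5.11.11]{CG97}''; what you have written is a faithful equivariant spell-out of that argument (expand both sides, apply Theorem~\ref{Thm:EG} to the proper map $p_{13}$, use the Whitney product formula for Todd classes together with the projection formula, and cancel the invertible factor $p_3^{(13)*}\Td^G_{M_3}$). The only small imprecision is your closing remark about finite generation over $R(G)_\kk$: the statement holds in full generality without any finiteness hypothesis, because the local Chern character, the proper pushforward, the flat pullback, and the derived tensor product are all already defined directly on the completed groups $\hK^G(-)_\kk$ and $H^G_*(-,\kk)^\wedge$, so there is no separate ``passage to completion'' step to justify; this does not affect the validity of the argument.
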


\subsection{Quiver Hecke algebra}
\label{Ssec:KLR}

Fix an element $\beta = \sum_{i \in I}
d_{i} \alpha_{i}
\in \cQ^{+}$ 
and put $d := \sum_{i \in I} d_{i}$.
Let
$$
I^{\beta} := \{ 
\ii = (i_{1}, \ldots, i_{d}) \in I^{d} \mid
\alpha_{i_{1}} + \cdots + \alpha_{i_{d}} = \beta 
\}.
$$
The symmetric group $\SG_{d}$
of degree $d$ acts on the set $I^{\beta}$ 
from the right by
$(i_{1}, \ldots, i_{d}) \cdot w := (i_{w(1)}, \ldots
, i_{w(d)}).$
Let $s_{k} \in \SG_{d}$ 
denote the transposition of $k$ and $k+1$
for $1 \le k < d$.

\begin{Def}[Khovanov-Lauda \cite{KL09}, Rouquier \cite{Rouquier08}]
\label{Def:KLR}
The quiver Hecke algebra $H_{Q}(\beta)$
is defined to be a $\kk$-algebra with the generating set
$
\{ e( \ii ) \mid \ii \in I^{\beta}\}
\cup 
\{x_{1}, \ldots, x_{d} \}
\cup
\{\tau_{1}, \ldots, \tau_{d-1} \},
$
satisfying the following relations:
$$
e(\ii) e(\ii^{\prime}) = 
\delta_{\ii, \ii^{\prime}} e(\ii), \quad
\sum_{\ii \in I^{\beta}} e(\ii) = 1,
\quad
x_{k} x_{l} = x_{l} x_{k}, \quad
x_k e(\ii) = e(\ii) x_{k},  
$$
$$
\tau_{k} e(\ii) = e(\ii \cdot s_k) \tau_k,
\quad
\tau_k \tau_l = \tau_l \tau_k \quad \text{if $|k-l| > 1$},
$$
$$
\tau_{k}^{2} e(\ii) = 
\begin{cases}
(x_{k} - x_{k+1})e(\ii), 
& \text{if $i_{k} \leftarrow i_{k+1}$},\\
(x_{k+1} - x_{k})e(\ii), 
& \text{if $i_{k} \to i_{k+1}$},\\ 
e(\ii) & \text{if $a_{i_{k}, i_{k+1}} = 0$},\\
0
& \text{if $i_{k}=i_{k+1}$},
\end{cases}
$$
$$
(\tau_k x_l - x_{s_{k}(l)} \tau_k)e(\ii)
= \begin{cases}
- e(\ii) & \text{if $l=k, i_{k} = i_{k+1}$}, \\
e(\ii) & \text{if $l=k+1, i_{k} = i_{k+1}$}, \\
0 & \text{otherwise},
\end{cases}  
$$
$$
(\tau_{k+1} \tau_{k} \tau_{k+1} - \tau_{k} \tau_{k+1} \tau_{k}) 
e(\ii)
= 
\begin{cases}
e(\ii)
& \text{if $i_{k} = i_{k+2}, i_{k} \leftarrow i_{k+1}$}, \\
-e(\ii) 
& \text{if $i_{k} = i_{k+2}, i_{k} \to i_{k+1}$}, \\
0 &
\text{otherwise}.
\end{cases}  
$$
\end{Def}

The quiver Hecke algebra $H_{Q}(\beta)$ is equipped with 
a $\Z$-grading given by
$$
\deg e(\ii) = 0, \quad
\deg x_k = 2, \quad
\deg \tau_{k} e(\ii) = -a_{i_k, i_{k+1}}.
$$
Since the grading is bounded from below
(see \cite[Theorem~2.5]{KL09}),
the completion $\widehat{H}_{Q}(\beta)
:= H_{Q}(\beta)^{\wedge}$
inherits a natural structure of $\kk$-algebra.

We recall the faithful polynomial
right representation of $H_{Q}(\beta)$
from \cite[Section 2.3]{KL09}.
We set 
$$
P_{\beta} := \bigoplus_{\ii \in I^{\beta}}
\kk [x_{1}, \ldots, x_{d}] 1_{\ii}
$$
with a commutative $\kk[x_{1}, \ldots, x_{d}]$-algebra 
structure $1_{\ii} \cdot 1_{\ii^{\prime}} = 
\delta_{\ii \ii^{\prime}} 1_{\ii}$.  
We define $f^{w}(x_{1}, \ldots, x_{d})
:= f(x_{w(1)}, \ldots, x_{w(d)})$ for
$f \in \kk[x_{1}, \ldots, x_{d}]$
and $w \in \SG_{d}$.

\begin{Thm}[\cite{KL09} Proposition 2.3]
\label{Thm:KL}
The following formulas give a faithful 
right $H_{Q}(\beta)$-module structure on 
the $\kk$-vector space $P_{\beta}$:
\begin{align*}
a \cdot e(\ii) &= a 1_{\ii}, \\
a \cdot x_{k} &= a x_{k}, \\
(f 1_{\ii}) \cdot \tau_{k} &=
\begin{cases}
\displaystyle 
\frac{f^{s_{k}} - f}{x_{k} - x_{k+1}} 1_{\ii} & \text{if $i_{k} = i_{k+1}$},\\
 (x_{k+1} - x_{k})f^{s_{k}} 1_{\ii \cdot s_{k}} & 
\text{if $i_{k} \leftarrow i_{k+1}$},\\
f^{s_{k}} 1_{\ii \cdot s_{k}} & \text{otherwise},
\end{cases}        
\end{align*} 
where $a \in P_{\beta}$ and 
$f 1_{\ii} \in \kk[x_{1}, \ldots, x_{d}]1_{\ii}$.
\end{Thm}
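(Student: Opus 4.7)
The plan is to prove the two assertions in turn: first that the given formulas define a $\kk$-linear right action of $H_Q(\beta)$ on $P_\beta$, and second that this action is faithful.

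For well-definedness, I would check each defining relation of Definition~\ref{Def:KLR}. The idempotent identities and pure polynomial relations follow at once from $1_\ii \cdot 1_{\ii'} = \delta_{\ii\ii'} 1_\ii$ and commutativity of $\kk[x_1, \ldots, x_d]$. The intertwining $\tau_k e(\ii) = e(\ii \cdot s_k) \tau_k$ and the distant commutations $\tau_k \tau_l = \tau_l \tau_k$ for $|k-l|>1$ are immediate because $\tau_k$ touches only positions $k, k+1$. The substance lies in: (i) the quadratic relation $\tau_k^2 e(\ii)$, which reduces when $i_k = i_{k+1}$ to the vanishing of the square of the divided difference, and otherwise to composing two shifts with the prescribed $(x_{k+1} - x_k)$-factors; (ii) the Leibniz-type mixed relation $(\tau_k x_l - x_{s_k(l)} \tau_k) e(\ii)$, which records the identity $\partial_k(fg) = (\partial_k f)\, g + f^{s_k} (\partial_k g)$ when $i_k = i_{k+1}$ and is trivial otherwise; (iii) the braid relation for the $\tau$'s, which requires a case analysis on $(i_k, i_{k+1}, i_{k+2})$ and whose only nontrivial case $i_k = i_{k+2}$, $i_k \sim i_{k+1}$ admits a direct six-term cancellation producing a residue $\pm e(\ii)$ consistent with the arrow orientation.

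For faithfulness, I would use the standard PBW-type spanning set. Fixing for each $w \in \SG_d$ a reduced expression $w = s_{k_1} \cdots s_{k_{\ell(w)}}$ and setting $\tau_w := \tau_{k_1} \cdots \tau_{k_{\ell(w)}}$, the relations already verified imply that $H_Q(\beta)$ is spanned by $\{\tau_w x^a e(\ii) \mid w \in \SG_d,\, a \in \Z_{\ge 0}^d,\, \ii \in I^\beta\}$. By induction on $\ell(w)$, tracking whether each inserted factor falls in the divided-difference or in the permutation case, one obtains an expansion of the form
$$
1_{\ii \cdot w} \cdot (\tau_w x^a e(\ii)) = c_{w, a, \ii}\, x^{w(a)} 1_\ii + (\text{terms in } 1_{\ii'} \text{ for } \ii' \ne \ii \text{ or arising from strictly shorter Weyl elements})
$$
for some nonzero $c_{w, a, \ii} \in \kk[x_1, \ldots, x_d]$. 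A leading-term argument combining a lexicographic order on monomials in $x_1, \ldots, x_d$ with the length partial order on $\SG_d$ then yields the $\kk$-linear independence of the images of $\tau_w x^a e(\ii)$ in $\End_\kk(P_\beta)$, hence faithfulness.

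The main obstacle is the braid relation in the case $i_k = i_{k+2}$, $i_k \sim i_{k+1}$: the expansion of $(f 1_\ii) \cdot (\tau_{k+1} \tau_k \tau_{k+1} - \tau_k \tau_{k+1} \tau_k)$ mixes divided differences with shift-twists, and its collapse to the desired $\pm e(\ii)$-residue requires simultaneously tracking the $(x_j - x_{j+1})$-factors and the orientation of the arrow between $i_k$ and $i_{k+1}$. Once this is in hand, the remaining cases of (a) are short direct computations, and the triangularity argument for (b) is standard provided one carefully bookkeeps the leading coefficients accumulated through consecutive divided-difference steps.
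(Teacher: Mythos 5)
The paper does not prove this statement; it imports it verbatim as a citation to Khovanov--Lauda~\cite{KL09}, Proposition~2.3, so there is no in-paper proof for your argument to be compared against. Your outline is the standard one: verify the defining relations of Definition~\ref{Def:KLR} directly on the polynomial representation (the only substantive cases being the quadratic relation, the Leibniz-type divided-difference relation, and the braid relation with $i_k = i_{k+2}$, $i_k \sim i_{k+1}$), then establish faithfulness via a PBW-type spanning set $\{\tau_w x^a e(\ii)\}$ and a triangularity/leading-term argument. This is essentially how the result is proved in~\cite{KL09}, adjusted to the right-module convention used here.

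One bookkeeping slip worth correcting: with the paper's conventions, $f 1_{\ii'} \cdot \tau_k$ lands in $\kk[x_1,\dots,x_d]\,1_{\ii'\cdot s_k}$, so $1_{\ii'}\cdot \tau_w$ lands in the $1_{\ii'\cdot w}$-component. Hence $1_{\ii'}\cdot(\tau_w x^a e(\ii))$ is annihilated by $e(\ii)$ unless $\ii'\cdot w = \ii$, i.e.\ $\ii' = \ii\cdot w^{-1}$; your expansion should therefore be evaluated on $1_{\ii\cdot w^{-1}}$, not $1_{\ii\cdot w}$. This is cosmetic and does not affect the validity of the triangularity argument, but it matters when tracking which idempotent component the leading term sits in.
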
 

Replacing the polynomial ring $\kk[x_{1}, \ldots, x_{d}]$
with the ring $\kk[\![ x_{1}, \ldots, x_{d} ]\!]$ of formal power series,
we get the completion of the representation $P_{\beta}$:
\begin{equation}
\label{Eq:Phat}
\widehat{P}_{\beta}
:= \bigoplus_{\ii \in I^{\beta}}
\kk[\![ x_{1}, \ldots, x_{d} ]\!] 1_{\ii}
= P_{\beta} \otimes_{H_{Q}(\beta)} \widehat{H}_{Q}(\beta).
\end{equation}


\subsection{Varagnolo-Vasserot\rq{}s realization}
\label{Ssec:VV}

Fix an $I$-graded $\C$-vector space $D=\bigoplus_{i \in I} D_{i}$
with $\vdim D = \beta$,
i.e. $\dim D_{i} = d_i$
as in Subsection \ref{Ssec:Rep_quiver}.
We consider the following two non-singular $G_{\beta}$-varieties:
\begin{align*}
\B_{\beta} &= 
\{F^{\bullet} = (D=F^{0} \supsetneq F^{1} \supsetneq \cdots 
\supsetneq F^{d}=0) \mid \text{$F^{k}$ is an $I$-graded 
subspace of $D$} \}, \\
\F_{\beta} &=
\{(F^{\bullet}, x) \in \B_{\beta} \times E_{\beta}
\mid x(F^{k}) \subset F^{k} \; \text{for any $1\le k \le d$} \}.
\end{align*}
The $G_\beta$-action on $\F_{\beta}$ is defined so that
the projections 
$\mathrm{pr}_{1} : \F_{\beta} \to \B_{\beta}$ and
$\mu_{\beta} := \mathrm{pr}_{2}: \F_{\beta} \to E_{\beta}$
are $G_{\beta}$-equivariant. 
They decompose into connected components as
$$
\B_{\beta} = \bigsqcup_{\ii \in I^{\beta}} \B_{\ii},
\qquad
\F_{\beta} = \bigsqcup_{\ii \in I^{\beta}} \F_{\ii}, 
$$
where we put 
$$
\B_{\ii} := \{ F^{\bullet} \in \B_{\beta} \mid 
\vdim F^{k-1} = \vdim F^{k} + \alpha_{i_k}, \; \forall k \},
\quad \F_{\ii} := (\mathrm{pr}_{1})^{-1}(\B_{\ii})
$$
for $\ii=(i_{1}, \ldots, i_{d}) \in I^{\beta}$.

We fix a basis $\{ v_{k} \}_{1 \le k\le d}$
of the vector space $D$ so that
the set $\{ v_{i,j}\}_{1 \le j \le d_{i}}$
forms a basis of the vector space $D_{i}$ for each $i \in I$, 
where we put
$
v_{i,j} := v_{d_{1} + \cdots + d_{i-1} + j}.
$
Let $H_{i} \subset GL(D_{i})$
be the maximal torus 
fixing the lines $\{\C v_{i,j}\}_{1\le j \le d_{i}}$
for each $i \in I$.
We set
$H_{\beta} := \prod_{i \in I} H_{i} \subset G_{\beta}$.

Let
$
F_{0}^{\bullet} \in \B_{\beta}
$
be the flag
defined by
$
F_{0}^{k} := \bigoplus_{l > k} \C v_{l},
$
which belongs to the component $\B_{\ii_0}$
with 
$\ii_0 := (1^{d_{1}}, 2^{d_{2}}, \ldots, n^{d_{n}}) \in I^{\beta}$.
For each $\ii \in I^{\beta}$, we fix an element 
$w_{\ii} \in \SG_{d}$ such that $\ii = \ii_{0} \cdot w_{\ii}$.
The set $\{ w_{\ii} \}_{\ii \in I^{\beta}}$ forms a complete
system of coset representatives for the quotient 
$\SG_{\beta} \backslash \SG_{d}$, where
$\SG_{\beta} := \Stab_{\SG_{d}}(\ii_0) 
= \SG_{d_1} \times \cdots \times \SG_{d_n}$.
For each $w \in \SG_{d}$, we define the flag $F^{\bullet}_{w}$ by
$
F_{w}^{k} := \bigoplus_{l > k} \C v_{w(l)}
$
which belongs to the component $\B_{\ii_{0} \cdot w}$.
Let
$F^{\bullet}_{\ii} := F^{\bullet}_{w_{\ii}} \in \B_{\ii}$
for $\ii \in I^{\beta}$.
Then we have
$\B_{\ii} \cong G_{\beta}/B_{\ii}$
with $B_{\ii} := \Stab_{G_{\beta}}(F_{\ii}^{\bullet})
\subset G_{\beta}$
being the Borel subgroup fixing the flag $F^{\bullet}_{\ii}$,
which
contains the maximal torus $H_{\beta}$.
Then
we have
\begin{equation}
\label{Eq:isomRH}
H^{G_{\beta}}_{*}(\B_{\ii}, \kk)
\cong H^{B_{\ii}}_{*}(\mathrm{pt}, \kk)
\cong H_{H_{\beta}}^{*}(\mathrm{pt}, \kk)
\cong \kk[x_{1}, \ldots, x_{d}] 1_{\ii},
\end{equation}
where the last isomorphism sends 
the $1$st $H_{\beta}$-equivariant 
Chern class of the line $\C v_{w_{\ii}(k)}$
to the element $x_{k}1_{\ii}$.
Thus we get an isomorphism
\begin{equation}
\label{Eq:isomP}
H_{*}^{G_{\beta}}(\B_{\beta}, \kk) 
= \bigoplus_{\ii \in I^{\beta}}
H_{*}^{G_{\beta}}(\B_{\ii}, \kk)
\cong \bigoplus_{\ii \in I^{\beta}} 
\kk[x_{1}, \ldots, x_{d}] 1_{\ii} 
= P_{\beta}.
\end{equation}

We consider the Steinberg type variety 
$
\cZ_\beta := \F_{\beta} \times_{E_{\beta}} \F_{\beta} 
$
associated with the morphism $\mu_{\beta}: \F_{\beta} \to E_{\beta}$.
Its $G_{\beta}$-equivariant 
Borel-Moore homology group
$
H^{G_{\beta}}_{*}(\cZ_{\beta}, \kk)$
becomes a $\kk$-algebra with respect to   
the convolution product relative to 
$\F_{\beta} \times \F_{\beta} \times \F_{\beta}$.
We identify the variety $\B_{\beta}$
with the fiber product
$\{ 0 \} \times_{E_{\beta}} \F_{\beta}$.
Then the convolution product
relative to $\{ 0 \} \times \F_{\beta} \times \F_{\beta}$
makes the space $H_{*}^{G_{\beta}}(\B_{\beta}, \kk)$
into a right $H_{*}^{G_{\beta}}(\cZ_{\beta}, \kk)$-module. 

Let $\mu_{\ii}$ denote the
restriction of the proper morphism $\mu_{\beta}: \F_{\beta} \to E_{\beta}$
to the component $\F_{\ii}$ for $\ii \in I^{\beta}$.
We put
$$
\mathcal{L}_{\beta}
:= \bigoplus_{\ii \in I^{\beta}} 
(\mu_{\ii})_{*} \underline{\kk} [\dim \F_{\ii}],
$$
where
$\underline{\kk}[\dim \F_{\ii}]$
is the trivial local system 
(i.e. the constant $\kk$-sheaf of rank $1$)
on $\F_{\ii}$
homologically shifted by $\dim \F_{\ii}$.
By the decomposition theorem, we have
$$
\mathcal{L}_{\beta}
\cong \bigoplus_{\mm \in \KP(\beta)}
L_{\mm} \otimes_{\kk} \mathcal{IC}_{\mm}
= \bigoplus_{\mm \in \KP(\beta)} 
\bigoplus_{k \in \Z} L_{\mm, k} \otimes_{\kk} \mathcal{IC}_{\mm}[k],
$$
where 
$\mathcal{IC}_{\mm}$ 
denotes the intersection cohomology complex associated 
with the trivial local system on the orbit $\mathbb{O}_{\mm}$
and $L_{\mm} = \bigoplus_{k \in \Z} L_{\mm, k}[k]$ is a
self-dual 
finite-dimensional graded $\kk$-vector space
for each $\mm \in \KP(\beta)$.
The vector space $L_{\mm}$ is known to be non-zero 
for all $\mm \in \KP(\beta)$
(see \cite[Corollary 2.8]{Kato14}).
We consider the Yoneda algebra
$$
\Ext_{G_{\beta}}^{*}
(\mathcal{L}_{\beta},
\mathcal{L}_{\beta})
= \bigoplus_{k \in \Z} \Ext_{G_{\beta}}^{k}(
\mathcal{L}_{\beta},
\mathcal{L}_{\beta})
$$
in the derived category of 
$G_{\beta}$-equivariant constructible complexes on $E_{\beta}$.
This is a $\Z$-graded $\kk$-algebra whose grading
is bounded from below.

By a standard argument (see \cite[Section 8.6]{CG97}),
we have an isomorphism of $\kk$-algebras
\begin{equation}
\label{Eq:isomG1}
\Ext_{G_{\beta}}^{*}(\mathcal{L}_{\beta},
\mathcal{L}_{\beta})
\cong H^{G_{\beta}}_{*}(\cZ_{\beta}, \kk).
\end{equation}
Note that this is not
compatible with the $\Z$-grading.

Let $\mathcal{L}_{\ii}(k)$ be
the $G_{\beta}$-equivariant line bundle
on $\F_{\ii}$ whose fiber at the point $(F^{\bullet}, x) \in \F_{\ii}$
is $F^{k-1} / F^{k}$
for $\ii \in I^{\beta}$ and $1 \le k \le d$.

\begin{Thm}[Varagnolo-Vasserot \cite{VV11}]
\label{Thm:VV}
There is a unique isomorphism of $\Z$-graded $\kk$-algebras
\begin{equation}
\label{Eq:isomVV}
H_{Q}(\beta) \xrightarrow{\cong} \Ext_{G_{\beta}}^{*}
(\mathcal{L}_{\beta}, \mathcal{L}_{\beta})
\end{equation}
which satisfies the following properties:
\begin{enumerate}
\item \label{Thm:VV:elem}
The composition $H_{Q}(\beta) 
\xrightarrow{\cong} H^{G_{\beta}}(\cZ_{\beta}, \kk)$ 
of the isomorphisms (\ref{Eq:isomVV})
and (\ref{Eq:isomG1}) sends the element $e(\ii)$ (resp.~$x_{k} e(\ii)$)
to the push-forward of the fundamental
class $[\F_{\ii}]$ (resp.~the $1$st $G_{\beta}$-equivariant Chern class 
of the line bundle $\mathcal{L}_{\ii}(k)$) with respect to the diagonal embedding
$\F_{\ii} \to \F_{\ii} \times_{E_{\beta}} \F_{\ii}$;
\item We have the following commutative diagram:
$$
\xy
\xymatrix{
H_{Q}(\beta)
\ar[r]^-{\cong}
\ar[d]
& 
H_{*}^{G_{\beta}}(\cZ_{\beta}, \kk) 
\ar[d]
\\
\End \left(P_{\beta} \right)^{\mathrm{op}}
\ar[r]^-{\cong}
&
\End \left(
H_{*}^{G_{\beta}}(\B_{\beta}, \kk) \right)^{\mathrm{op}},
}
\endxy
$$
where the lower horizontal arrow 
denotes the isomorphism induced from (\ref{Eq:isomP}) 
and the vertical arrows denote the right module structures.
\end{enumerate}
\end{Thm}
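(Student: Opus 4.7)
The plan is to construct the isomorphism by first defining an algebra homomorphism $\Psi : H_Q(\beta) \to H^{G_\beta}_*(\cZ_\beta, \kk)$ on generators, then verifying the KLR relations, and finally proving bijectivity through the faithful polynomial representation of Theorem~\ref{Thm:KL}. On generators, I would send $e(\ii)$ to the push-forward of the fundamental class $[\F_\ii]$ under the diagonal embedding $\F_\ii \hookrightarrow \F_\ii \times_{E_\beta} \F_\ii \subset \cZ_\beta$; send $x_k e(\ii)$ to the cap product of that diagonal class with the first $G_\beta$-equivariant Chern class of the tautological line bundle $\mathcal{L}_\ii(k)$; and send $\tau_k e(\ii)$ to the fundamental class of the correspondence $Z_{\ii,k} \subset \F_\ii \times_{E_\beta} \F_{\ii \cdot s_k}$ consisting of pairs of flags $(F^\bullet, F'^\bullet)$ which agree in every degree except possibly the $k$-th. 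These are precisely the classes dictated by property~(\ref{Thm:VV:elem}), so their specification also takes care of the uniqueness clause of the theorem. Composing with the Ext-algebra identification (\ref{Eq:isomG1}), one then obtains the desired map to $\Ext^*_{G_\beta}(\mathcal{L}_\beta, \mathcal{L}_\beta)$; graded-ness is built into the construction since all generators are given by classes of pure codimensional cycles or Chern classes of line bundles.

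The main obstacle is the verification of the KLR relations. The orthogonality of the $e(\ii)$, the commutativity $x_k x_l = x_l x_k$, and the far commutation $\tau_k \tau_l = \tau_l \tau_k$ for $|k-l| > 1$ reduce to obvious disjointness of the underlying correspondences. The substantial identities are the quadratic relation on $\tau_k^2$, the mixed relation for $\tau_k x_l - x_{s_k(l)} \tau_k$, and above all the braid relation $\tau_{k+1}\tau_k\tau_{k+1} - \tau_k\tau_{k+1}\tau_k$. My plan for each of them is to pass to the $H_\beta$-fixed points of the relevant iterated fiber products of flag varieties and compute the convolution product by equivariant localization, so that each composition expresses as a sum over fixed points with weights given by ratios of $G_\beta$-equivariant Euler classes of normal bundles. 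The computation reduces to the rank-one or rank-two subvariety where only one or two consecutive indices of $\ii$ vary, and the orientation-dependent signs on the right-hand sides of Definition~\ref{Def:KLR} arise naturally from the orientation function $\varepsilon(i,j)$ used in the moment map, matching the chosen orientation of the arrows in $Q$. The braid relation is the most delicate, as the sign sensitivity to $i_k \leftarrow i_{k+1}$ versus $i_k \to i_{k+1}$ must be tracked carefully through a convolution over a three-step partial flag variety with one two-step jump.

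Once $\Psi$ is seen to factor through the relations, it gives a graded algebra homomorphism, and both injectivity and property~(2) follow by comparing the geometric action on $H^{G_\beta}_*(\B_\beta, \kk)$ with the polynomial representation $P_\beta$ under the identification (\ref{Eq:isomP}). A direct localization computation shows that each image generator acts on $P_\beta$ by exactly the formulas of Theorem~\ref{Thm:KL}, so the square in property~(2) commutes; faithfulness of $P_\beta$ then forces $\Psi$ to be injective. For surjectivity, I would invoke the standard Bruhat-type cellular paving of $\cZ_\beta$: its $G_\beta$-equivariant Borel-Moore homology has a basis (as a free module over $H^*_{G_\beta}(\mathrm{pt}, \kk)$) indexed by triples $(\ii, \ii', w)$ with $w$ a minimal coset representative in $\SG_\beta \backslash \SG_d / \SG_\beta$-type combinatorics, and matching this against the PBW-type basis of $H_Q(\beta)$ given by $\{e(\ii) \tau_{w_{\ii\ii'}} \cdot \text{monomials in } x_k\}$ yields equal graded ranks. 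Transporting through (\ref{Eq:isomG1}) then delivers the asserted isomorphism of $\Z$-graded $\kk$-algebras.
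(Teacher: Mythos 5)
The paper does not prove Theorem~\ref{Thm:VV} at all: it is cited verbatim from Varagnolo--Vasserot \cite{VV11}, and the only original content surrounding it is the Remark immediately after, which records that the isomorphism used in this paper is the one of \cite{VV11} post-composed with an explicit involution of $H_Q(\beta)$ (the map $e(\ii)\mapsto e(\ii^{\mathrm{op}})$, $x_k\mapsto x_{d-k+1}$, $\tau_k e(\ii)\mapsto \pm\tau_{d-k}e(\ii^{\mathrm{op}})$) in order to reconcile a different convention for $\B_\beta$. So your proposal is necessarily a reconstruction of a proof the paper never gives, and should be assessed against \cite{VV11} rather than against anything in this text.

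As a reconstruction, the overall shape is reasonable and broadly matches the Varagnolo--Vasserot strategy (generators $\mapsto$ geometric classes, relations by convolution/localization, faithfulness from the polynomial representation, surjectivity by a paving and rank count). But there is one concrete gap in your argument for uniqueness. You assert that the three assignments ``are precisely the classes dictated by property~(\ref{Thm:VV:elem}), so their specification also takes care of the uniqueness clause.'' Property~(\ref{Thm:VV:elem}) only pins down the images of $e(\ii)$ and $x_k e(\ii)$; it says nothing about $\tau_k e(\ii)$, so it cannot by itself force uniqueness of an algebra isomorphism. Uniqueness instead follows from property~(2) together with faithfulness of $P_\beta$ (Theorem~\ref{Thm:KL}): any two isomorphisms satisfying (1) and (2) differ by an automorphism of $H_Q(\beta)$ that fixes all $e(\ii)$ and $x_k e(\ii)$ and acts trivially in the faithful polynomial representation, hence is the identity. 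This chain needs to be stated; without it the uniqueness clause is not actually addressed.

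A second, more technical caveat: describing the image of $\tau_k e(\ii)$ simply as ``the fundamental class of the correspondence of pairs of flags agreeing in every degree except the $k$-th'' hides essentially all of the orientation-sensitivity. When $i_k\neq i_{k+1}$, the two $I$-graded flag data determine each other, but the conditions ``$x$ preserves $F^\bullet$'' and ``$x$ preserves $F'^\bullet$'' are not equivalent: depending on whether $i_k\to i_{k+1}$ or $i_k\leftarrow i_{k+1}$, one of them imposes an extra codimension-one constraint on $x$. The fundamental class of the naive correspondence therefore represents operators of different homological degree in the two orientations, and this discrepancy is precisely what produces the factor $(x_{k+1}-x_k)$ in one direction but not the other, and the signs in the cubic relation. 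Your outline defers this to ``track carefully through a convolution,'' which is fine as a plan, but it should be flagged that the assignment of $\tau_k$ itself is where the asymmetry enters, not merely in the verification of the relations.
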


\begin{Rem}
Because our convention of the flag variety $\B_{\beta}$ 
differs from Varagnolo-Vasserot\rq{}s \cite{VV11},
we need a modification.
Actually, our isomorphism 
(\ref{Eq:isomVV}) is obtained by 
twisting the  
original isomorphism
$H_{Q}(\beta) \cong \Ext_{G_{\beta}}^{*}(
\mathcal{L}_{\beta}, \mathcal{L}_{\beta})$
in \cite{VV11} by a $\kk$-algebra involution
on $H_{Q}(\beta)$ given by
$$
e(\ii) \mapsto e(\ii^{\mathrm{op}}),  \quad
x_{k} \mapsto x_{d-k+1}, 
\quad
\tau_{k}e(\ii)  \mapsto 
\begin{cases}
-\tau_{d-k} e(\ii^{\mathrm{op}}) & \text{if $i_k = i_{k+1}$}; \\
\tau_{d-k} e(\ii^{\mathrm{op}}) & \text{if $i_k \neq i_{k+1}$},
\end{cases}
$$
where $\ii^{\mathrm{op}} := (i_{d}, \ldots, i_{2}, i_{1})$ 
for $\ii = (i_{1}, i_{2}, \ldots, i_{d}) \in I^{\beta}$.
\end{Rem}

Similarly to the case of
the $G_{\beta}$-equivariant Borel-Moore homologies,
the $K$-group $K^{G_{\beta}}(\cZ_{\beta})_{\kk}$
becomes an $R(G_{\beta})_{\kk}$-algebra 
and the $K$-group $K^{G_{\beta}}(\B_{\beta})_{\kk}$
becomes a right 
$K^{G_{\beta}}(\cZ_{\beta})_{\kk}$-module
with respect to the convolution products.

For each $\ii \in I^{\beta}$, we have
$$
K^{G_{\beta}}(\B_{\ii})_{\kk} \cong 
K^{B_{\ii}}(\mathrm{pt})_{\kk} \cong 
K^{H_{\beta}}(\mathrm{pt})_{\kk}
= R(H_{\beta})_{\kk}
\cong \kk[y^{\pm 1}_{1}, \ldots, y^{\pm 1}_{d}] 1_{\ii}
$$
where the last isomorphism sends
the class $[\C v_{w_{\ii}(k)}]$
of the $1$-dimensional
$H_{\beta}$-module $\C v_{w_{\ii}(k)}$
to the element $y_{k} 1_{\ii}$. 
The $G_{\beta}$-equivariant 
Chern character map
$
(\ch^{G_{\beta}})^{\B_{\ii}}_{\B_{\ii}}
$
gives an isomorphism of $\kk$-algebras
$$
\hK^{G_{\beta}}(\B_{\ii})_{\kk}
\cong \kk[\![ y_{1} -1, \ldots, y_{d}-1]\!] 1_{\ii}
\xrightarrow{\cong}
\kk[\![ x_{1}, \ldots, x_{d} ]\!] 1_{\ii}
\cong H_{*}^{G_{\beta}}(\B_{\ii}, \kk)^{\wedge}, 
$$
where the middle arrow 
sends the element $y_{k} 1_{\ii}$
to the exponential $e^{x_{k}} 1_{\ii}$
for $1 \le k \le d$.
Applying the equivariant Riemann-Roch theorem (=Theorem~\ref{Thm:EG})
to the inclusion $\B_{\ii} \hookrightarrow \F_{\ii}$,
we have
\begin{equation}
\label{Eq:const}
(\ch^{G_{\beta}})^{\F_{\ii}}_{\B_{\ii}} = C_{\ii} \cdot (\ch^{G_{\beta}})^{\B_{\ii}}_{\B_{\ii}},
\quad
C_{\ii} := (\Td^{G_{\beta}}_{\F_{\ii}})^{-1} \Td^{G_{\beta}}_{\B_{\ii}} \cdot 1_{\ii}
\in \kk[\![ x_{1}, \ldots, x_{d} ]\!] 1_{\ii}
\end{equation}
and hence the map
$(\ch^{G_{\beta}})^{\F_{\ii}}_{\B_{\ii}}$ 
is an isomorphism of $\hR(G_{\beta})_{\kk}$-modules.
Summing up over $\ii \in I^{\beta}$, we obtain
an isomorphism of $\hR(G_{\beta})_{\kk}$-modules
\begin{equation}
\label{Eq:isomRRB}
(\ch^{G_{\beta}})^{\F_{\beta}}_{\B_{\beta}} : 
\hK^{G_{\beta}}(\B_{\beta})_{\kk} \xrightarrow{\cong} 
H_{*}^{G_{\beta}}(\B_{\beta}, \kk)^{\wedge}. 
\end{equation}

\begin{Prop}
\label{Prop:RRflag}
The Riemann-Roch homomorphism
gives an isomorphism of $\widehat{R}(G_{\beta})_{\kk}$-algebras:
$$ \RR^{G_{\beta}}  :
\hK^{G_{\beta}}(\cZ_{\beta})_{\kk}
\xrightarrow{\cong} 
H_{*}^{G_{\beta}}(\cZ_{\beta}, \kk)^{\wedge},
$$
which makes the following diagram commute:
\begin{equation}
\label{Diag:sqKH}
\xy
\xymatrix{
\hK^{G_{\beta}}(\cZ_{\beta})_{\kk}
\ar[r]^-{\cong}
\ar[d]
& 
H_{*}^{G_{\beta}}(\cZ_{\beta}, \kk)^{\wedge} 
\ar[d]
\\
\End \left(\hK^{G_{\beta}}(\B_{\beta})_{\kk} \right)^{\mathrm{op}}
\ar[r]^-{\cong}
&
\End \left(
H_{*}^{G_{\beta}}(\B_{\beta}, \kk)^{\wedge} \right)^{\mathrm{op}},
}
\endxy
\end{equation}
where the lower horizontal arrow denotes
the isomorphism induced from (\ref{Eq:isomRRB}) and  
the vertical arrows denote the right module structures.
\end{Prop}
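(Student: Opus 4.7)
The plan is to establish three things in order: that $\RR^{G_\beta}$ is a unital algebra homomorphism, that the square (\ref{Diag:sqKH}) commutes, and that $\RR^{G_\beta}$ is bijective. The first two are formal consequences of Proposition~\ref{Prop:RR}; the third is the substantive work.

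For the algebra homomorphism assertion, I would apply Proposition~\ref{Prop:RR} to the symmetric triple $M_1 = M_2 = M_3 = \F_\beta$ with $Z_{12} = Z_{23} = Z_{13} = \cZ_\beta$, which directly gives $\RR^{G_\beta}(\zeta * \eta) = \RR^{G_\beta}(\zeta) * \RR^{G_\beta}(\eta)$. Compatibility with the unit (the class of the structure sheaf of the diagonal on the $K$-side, the fundamental class of the diagonal on the homology side) follows from Theorem~\ref{Thm:EG} applied to the diagonal embedding $\F_\beta \hookrightarrow \F_\beta \times \F_\beta$, which cancels the Todd factors. For the commutativity of (\ref{Diag:sqKH}), apply Proposition~\ref{Prop:RR} to $M_1 = \mathrm{pt}$, $M_2 = M_3 = \F_\beta$, $Z_{12} = \B_\beta$, $Z_{23} = \cZ_\beta$, and identify the vertical arrows on the flag-variety side via the isomorphism (\ref{Eq:isomRRB}).

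The main work is the bijectivity. I would proceed by Noetherian induction along the orbit stratification (\ref{Eq:decE}) pulled back to $\cZ_\beta$. For each $\mm \in \KP(\beta)$ set $\cZ_\beta^{\le \mm} := \mu_\beta^{-1}(\overline{\mathbb{O}}_\mm) \times_{E_\beta} \mu_\beta^{-1}(\overline{\mathbb{O}}_\mm)$ and define $\cZ_\beta^{<\mm}$ analogously using $\overline{\mathbb{O}}_\mm \setminus \mathbb{O}_\mm$. Proposition~\ref{Prop:chex}, decorated by the invertible Todd factor built into the definition of $\RR^{G_\beta}$, produces a ladder of exact sequences in $\hK^{G_\beta}(-)_\kk$ and $H^{G_\beta}_*(-,\kk)^\wedge$ for the closed-open pair $(\cZ_\beta^{\le \mm}, \cZ_\beta^{<\mm})$. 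A five-lemma argument then reduces bijectivity on $\cZ_\beta$ to bijectivity on each open stratum $\cZ_\beta^\mm := \mu_\beta^{-1}(\mathbb{O}_\mm) \times_{\mathbb{O}_\mm} \mu_\beta^{-1}(\mathbb{O}_\mm)$.

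On each open stratum, Proposition~\ref{Prop:fiber} together with $G_\beta$-equivariant descent along the fibration $\cZ_\beta^\mm \to \mathbb{O}_\mm$ identifies both $\hK^{G_\beta}(\cZ_\beta^\mm)_\kk$ and $H^{G_\beta}_*(\cZ_\beta^\mm, \kk)^\wedge$ with the corresponding $(G(\mm) \times \C^\times)$-equivariant invariants of $\Lg(\mm) \times \Lg(\mm)$. Using the cell-like decomposition of $\Lg(\mm)$ standard in the theory of graded quiver varieties, the completed Chern character -- and hence $\RR^{G_\beta}$ up to the invertible Todd factor -- becomes an isomorphism cell by cell via a direct equivariant Koszul computation. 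The main obstacle I anticipate is precisely this final reduction: verifying the local isomorphism uniformly in $\mm$ and checking that the $I$-adic completion is compatible with restriction along the stratification, so that the inverse system defining $\hK^{G_\beta}(\cZ_\beta)_\kk$ passes cleanly to each $\cZ_\beta^{\le \mm}$. This finiteness point is the subtle ingredient underlying the whole argument.
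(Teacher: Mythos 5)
Your preliminary steps (Proposition~\ref{Prop:RR} for the algebra homomorphism and the commutativity of (\ref{Diag:sqKH})) are fine and match the paper. The problem is in the bijectivity argument: you have transplanted the strategy of Proposition~\ref{Prop:RRquiver}, which the paper uses for the graded quiver variety side $\Zg_{\beta}$, to the flag-variety side $\cZ_{\beta}$, and the key input does not carry over. Concretely, Proposition~\ref{Prop:fiber} describes the fibers of $\pi_{\beta}\colon \Mg_{\beta}\to E_{\beta}$: it says $\pi_{\beta}^{-1}(x_{\mm})\cong \Lg(\mm)$. You invoke it to claim that the open stratum $\cZ_{\beta}^{\mm}=\mu_{\beta}^{-1}(\mathbb{O}_{\mm})\times_{\mathbb{O}_{\mm}}\mu_{\beta}^{-1}(\mathbb{O}_{\mm})$ reduces to $(G(\mm)\times\C^{\times})$-equivariant geometry on $\Lg(\mm)\times\Lg(\mm)$. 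But $\mu_{\beta}^{-1}(x_{\mm})$ is the \emph{quiver Springer fiber} of the flag-type map $\mu_{\beta}\colon \F_{\beta}\to E_{\beta}$; it has nothing to do with $\Lg(\mm)$, and Proposition~\ref{Prop:fiber} is silent about it. The paper provides no cell decomposition or homology-vanishing statement for these Springer-type fibers, so the "direct equivariant Koszul computation" you promise on each stratum has no support. This is a genuine gap, not merely a subtle finiteness issue as you suggest at the end.

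The paper avoids this entirely by stratifying each connected component $\cZ_{\ii,\ii'}$ via a Schubert-type (cellular fibration) decomposition: the locally closed pieces $\cZ_{\ii,\ii'}^{w}$, indexed by $w\in\SG_{\beta}w_{\ii'}$ and ordered by Bruhat length, are $G_{\beta}$-equivariant affine bundles over $\B_{\ii}\cong G_{\beta}/B_{\ii}$. On each such affine bundle the odd equivariant Borel--Moore homology vanishes and the completed Chern character is visibly an isomorphism (reduce to a point via the bundle projection and the reduction $\B_{\ii}\cong G_{\beta}/B_{\ii}$). The cellular fibration lemma \cite[5.5.1]{CG97} together with Proposition~\ref{Prop:chex} then gives compatible short exact sequences in both theories, and one climbs up by induction on $w$. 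You should replace your orbit stratification with this Bruhat stratification; the orbit stratification is the right tool only on the $\Zg_{\beta}$ side, where Proposition~\ref{Prop:fiber} actually applies.
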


\begin{proof}
By Proposition~\ref{Prop:RR},
the map $\RR^{G_{\beta}} :
\hK^{G_{\beta}}(\cZ_{\beta})_{\kk}
\to H_{*}^{G_{\beta}}(\cZ_{\beta}, \kk)^{\wedge}$ is an
algebra homomorphism and the diagram 
(\ref{Diag:sqKH}) commutes.
To prove that the map 
$\RR^{G_{\beta}} :
\hK^{G_{\beta}}(\cZ_{\beta})_{\kk}
\to H_{*}^{G_{\beta}}(\cZ_{\beta}, \kk)^{\wedge}$ is an isomorphism,
it suffices to check that
the equivariant Chern character map $(\ch^{G_{\beta}}
)^{\F_{\beta} \times \F_{\beta}}_{\cZ_{\beta}} :
\hK^{G_{\beta}}(\cZ_{\beta})_{\kk}
\to H_{*}^{G_{\beta}}(\cZ_{\beta}, \kk)^{\wedge}$ 
gives an isomorphism of $\widehat{R}(G_{\beta})_{\kk}$-modules
since $\RR^{G_{\beta}}$ is obtained from 
$(\ch^{G_{\beta}}
)^{\F_{\beta} \times \F_{\beta}}_{\cZ_{\beta}}$
by multiplying
the $G_{\beta}$-equivariant Todd class
$p_{1}^{*}\Td^{G_{\beta}}_{\F_{\beta}}$, which  
is an invertible element. 
Because we have the connected component decomposition 
$$
\cZ_{\beta} = \bigsqcup_{\ii, \ii^{\prime} \in I^{\beta}}
\cZ_{\ii, \ii^{\prime}}, \quad
\cZ_{\ii, \ii^{\prime}} :=
\F_{\ii} \times_{E_{\beta}} \F_{\ii^{\prime}},
$$
we focus on a connected component 
$$
\cZ_{\ii, \ii^{\prime}}
= \{ (F^{\bullet}, F^{\prime \bullet}, x) \in
\B_{\ii} \times \B_{\ii^{\prime}} \times E_{\beta} \mid
x(F^{k}) \subset F^{k}, x(F^{\prime k}) \subset F^{\prime k}, \; 
\forall k \}.$$
For each $w \in \SG_{\beta} w_{\ii^{\prime}}$, we define 
a locally closed $G_{\beta}$-subvariety
$$
\cZ_{\ii, \ii^{\prime}}^{w}
= G_{\beta} \times^{B_{\ii}} \{ (F_{\ii}^{\bullet}, F^{\prime \bullet}, 
x ) \in \cZ_{\ii, \ii^{\prime}} \mid F^{\prime \bullet} \in 
B_{\ii} F_{w}^{\bullet} \}
$$
which is a $G_{\beta}$-equivariant affine bundle 
over $\B_{\ii}$.
They give a $G_{\beta}$-stable 
stratification
$
\cZ_{\ii, \ii^{\prime}} := 
\bigsqcup_{w \in \SG_{\beta} w_{\ii^{\prime}}}
Z_{\ii, \ii^{\prime}}^{w}.
$
Fix a total ordering 
$\SG_{\beta}w_{\ii^{\prime}}
= \{w_{1}, w_{2}, \ldots, w_{m} \}$
such that we have $w_{k}w_{\ii}^{-1} < w_{l} w_{\ii}^{-1}$ 
in the Bruhat ordering
only if $k<l$.
We simply write $\cZ_{\ii, \ii^{\prime}}^{k}
:= \cZ_{\ii, \ii^{\prime}}^{w_k}$ and set
$\cZ_{\ii, \ii^{\prime}}^{\le k} := 
\bigsqcup_{j \le k} \cZ_{\ii, \ii^{\prime}}^{j}$.
Then for each $k$,
the variety $\cZ_{\ii, \ii^{\prime}}^{\le k-1}$
is closed in $\cZ_{\ii, \ii^{\prime}}^{\le k}$
and its complement is $\cZ_{\ii, \ii^{\prime}}^{k}$.
Since $\cZ_{\ii, \ii^{\prime}}^{k}$ is 
a $G_{\beta}$-equivariant affine bundle over $\B_{\ii}$,
its homology of odd degree vanishes:
$H_{\mathrm{odd}}^{G_{\beta}}(\cZ_{\ii, \ii^{\prime}}^{k}, \kk)=0$.
Therefore an inductive argument with respect to $k$ 
yields
$H_{\mathrm{odd}}^{G_{\beta}}(\cZ_{\ii, \ii^{\prime}}^{\le k}, \kk) =0$.
Using the cellular fibration lemma \cite[5.5.1]{CG97} for 
equivariant $K$-groups and Proposition~\ref{Prop:chex}, 
we obtain 
the following commutative diagram with exact rows
for each $k$:
$$
\xy
\xymatrix{
0
\ar[r]
&
\hK^{G_{\beta}}(\cZ_{\ii, \ii^{\prime}}^{\le k-1})_{\kk}
\ar[r]
\ar[d]^-{\ch^{G_{\beta}}}
& 
\hK^{G_{\beta}}(\cZ_{\ii, \ii^{\prime}}^{\le k})_{\kk}
\ar[r]
\ar[d]^-{\ch^{G_{\beta}}}
&
\hK^{G_{\beta}}(\cZ_{\ii, \ii^{\prime}}^{k})_{\kk}
\ar[r]
\ar[d]^-{\ch^{G_{\beta}}}
&
0
\\
0
\ar[r]
&
H_{*}^{G_{\beta}}(\cZ_{\ii, \ii^{\prime}}^{\le k-1},
\kk)^{\wedge}
\ar[r]
&
H_{*}^{G_{\beta}}(\cZ_{\ii, \ii^{\prime}}^{\le k}, \kk)^{\wedge}
\ar[r]
&
H_{*}^{G_{\beta}}(\cZ_{\ii, \ii^{\prime}}^{k}, \kk)^{\wedge}
\ar[r]
&
0.
}
\endxy
$$ 
Note that the map
$\ch^{G_{\beta}} : 
\hK^{G_{\beta}}(\cZ^{k}_{\ii, \ii^{\prime}})_{\kk}
\to 
H_{*}^{G_{\beta}}(\cZ_{\ii, 
\ii^{\prime}}^{k}, \kk)^{\wedge}$
is an isomorphism for any $k$
since again the variety $Z^{k}_{\ii, \ii^{\prime}}$
is an affine bundle over $\B_{\ii}$.
Hence, by induction on $k$, we conclude that 
$\ch^{G_{\beta}} : 
\hK^{G_{\beta}}(\cZ^{\le k}_{\ii, \ii^{\prime}})_{\kk}
\to 
H_{*}^{G_{\beta}}(\cZ_{\ii, 
\ii^{\prime}}^{\le k}, \kk)^{\wedge}$
is an isomorphism for all $k$.
\end{proof}

Note that
the isomorphism (\ref{Eq:isomG1}) induces 
an isomorphism between the completions:
$$
\Ext_{G_{\beta}}^{*}(\mathcal{L}_{\beta}, 
\mathcal{L}_{\beta})^{\wedge}
\cong
H_{*}^{G_{\beta}}(\cZ_{\beta}, \kk)^{\wedge}. 
$$
As a summary of this subsection, we have the following.

\begin{Cor}
We have the following isomorphisms of $\kk$-algebras:
$$
\widehat{H}_{Q}(\beta) 
\cong \Ext_{G_{\beta}}^{*}(\mathcal{L}_{\beta},
\mathcal{L}_{\beta})^{\wedge} 
\cong
H_{*}^{G_{\beta}}(\cZ_{\beta}, \kk)^{\wedge}
\cong
\hK^{G_{\beta}}(\cZ_{\beta})_{\kk}.
$$
\end{Cor}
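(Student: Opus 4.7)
The corollary is a summary that chains together three isomorphisms each of which has essentially been built up through this subsection, so the plan is to assemble them carefully, paying attention to the various notions of completion.

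For the first isomorphism $\widehat{H}_{Q}(\beta) \cong \Ext_{G_{\beta}}^{*}(\mathcal{L}_{\beta},\mathcal{L}_{\beta})^{\wedge}$, I would invoke Theorem~\ref{Thm:VV}, which yields an isomorphism $H_{Q}(\beta) \xrightarrow{\cong} \Ext_{G_{\beta}}^{*}(\mathcal{L}_{\beta},\mathcal{L}_{\beta})$ of $\Z$-graded $\kk$-algebras. Both sides are graded algebras whose grading is bounded from below (for the quiver Hecke side this was noted in Subsection~\ref{Ssec:KLR}, and on the $\Ext$ side it is built into the construction of $\mathcal{L}_{\beta}$). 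The completion $V^{\wedge} = \prod_{k \in \Z} V_k$ turns such a bounded-below graded algebra into a genuine topological algebra: since each graded piece of a product of two homogeneous elements involves only finitely many contributions, multiplication extends continuously to the product. The isomorphism of graded algebras therefore induces an isomorphism of the completions, giving the first claim.

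The second isomorphism $\Ext_{G_{\beta}}^{*}(\mathcal{L}_{\beta},\mathcal{L}_{\beta})^{\wedge} \cong H_{*}^{G_{\beta}}(\cZ_{\beta}, \kk)^{\wedge}$ is precisely what is asserted in the paragraph immediately preceding the corollary as a consequence of the ungraded $\kk$-algebra isomorphism~(\ref{Eq:isomG1}). Even though~(\ref{Eq:isomG1}) fails to preserve the $\Z$-grading (the $\Ext$ grading and the Borel--Moore grading differ by the shifts $\dim \F_\ii + \dim \F_{\ii'}$ on each component), both completions are defined merely as the products of their homogeneous pieces, and~(\ref{Eq:isomG1}) restricts to a degreewise linear isomorphism after these shifts, so it lifts to an isomorphism of the completions.

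The third isomorphism $H_{*}^{G_{\beta}}(\cZ_{\beta}, \kk)^{\wedge} \cong \hK^{G_{\beta}}(\cZ_{\beta})_{\kk}$ is exactly the content of Proposition~\ref{Prop:RRflag}, whose equivariant Riemann--Roch map $\RR^{G_{\beta}}$ is an isomorphism of $\widehat{R}(G_{\beta})_{\kk}$-algebras. Composing the three isomorphisms then gives the corollary. The only genuinely non-formal point is the reconciliation between the two different flavors of completion involved---namely the graded (product-over-degrees) completion on the $\Ext$/Borel--Moore side versus the $I$-adic completion on the $K$-theoretic side---but this has already been absorbed into Proposition~\ref{Prop:RRflag} via the cellular fibration decomposition of $\cZ_{\beta}$, so no additional work is needed here.
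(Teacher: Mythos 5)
Your proposal is correct and follows the same route the paper implicitly takes: complete the graded isomorphism of Theorem~\ref{Thm:VV}, pass through the completion of~(\ref{Eq:isomG1}) (with the observation that the grading discrepancy is a componentwise shift, which is harmless for the product-over-degrees completion), and invoke Proposition~\ref{Prop:RRflag} for the Riemann--Roch comparison with the $I$-adic completion. The paper presents the corollary simply as a summary of these three facts, so no gaps arise.
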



\subsection{Nakajima\rq{}s homomorphism and the category 
$\Cc_{Q, \beta}$}
\label{Ssec:Nakajima}

Henceforth, we specialize $\kk$ to be the field $\Q(q)$
of rational functions in an indeterminate $q$.
In this subsection,
we consider  
the quantum loop algebra $U_{q} \equiv U_{q}(L \g)$
defined over $\kk$.
The quantum loop algebra $U_{q}(L \g)$
is isomorphic to the
level zero quotient of the quantum affine algebra
$U_{q}^{\prime}(\widehat{\g})$
without the degree operator.
We do not recall the definitions here.
See e.g.~\cite{Fujita17}, \cite{KKK15}, \cite{Nakajima01}
for the precise definitions of $U_{q}(L\g)$ or 
$U^{\prime}_{q}(\widehat{\g})$.

Recall 
the quiver varieties with proper
$\G(\lambda)$-equivariant morphism
$\pi : \M(\lambda) \to \M_{0}(\lambda)$
for each $\lambda \in \cP^{+}$ 
(see Subsection \ref{Ssec:Quiver_var}).
We consider the Steinberg type variety
$Z(\lambda) := 
\M(\lambda) \times_{\M_{0}(\lambda)} \M(\lambda)$.
Then its $\G(\lambda)$-equivariant $K$-group
$K^{\G(\lambda)}(Z(\lambda))$ becomes
an $R(\G(\lambda))$-algebra with respect to the
convolution product relative to 
$\M(\lambda) \times \M(\lambda) \times \M(\lambda)$.
We identify the fiber $\LL(\lambda) = \pi^{-1}(0)$ with 
the fiber product $\M(\lambda) \times_{\M_{0}(\lambda)} \{0\}.$
Then the convolution product relative to 
$\M(\lambda) \times \M(\lambda) \times \{ 0 \}$
makes the $K$-group $K^{\G(\lambda)}(\LL(\lambda))$
into a left $K^{\G(\lambda)}(Z(\lambda))$-module.

Recall that $\G(\lambda) = G(\lambda) \times \C^{\times}$.
We set $A := R(\C^{\times})$
and identify $A = \Z[v^{\pm 1}]$ in the standard way.
Specializing $v$ to $q$,
we regard $\kk$ as an $A$-algebra.

\begin{Thm}[Nakajima \cite{Nakajima01} Theorem 9.4.1]
\label{Thm:Nakajima}
There exists a $\kk$-algebra homomorphism
$$
\Phi_{\lambda} : U_{q}(L\g) \to 
K^{\G(\lambda)}(Z(\lambda))\otimes_{A} \kk
$$
such that the pull-back
$$
\bW(\lambda) := \Phi_{\lambda}^{*} \left(
K^{\G(\lambda)}(\LL(\lambda))\otimes_{A}\kk
\right)
$$
is a cyclic $U_{q}(L\g)$-module
generated by an extremal weight vector
$w_{\lambda} := [\mathcal{O}_{\LL(0, \lambda)}] \in 
K^{\G(\lambda)}(\LL(0,\lambda))\otimes_{A} \kk$
of weight $\lambda$. Moreover the module $\bW(\lambda)$
is free of finite rank over  
$\End_{U_{q}}(\bW(\lambda)) \cong 
R(\G(\lambda)) \otimes_{A} \kk$.
\end{Thm}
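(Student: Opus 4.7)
The strategy I would adopt imitates Nakajima's original approach and proceeds in three stages: first define $\Phi_\lambda$ on Drinfeld generators via Hecke-type correspondences, then verify the defining relations of $U_q(L\g)$ inside the convolution algebra, and finally establish the cyclicity statement together with the endomorphism-ring identification.

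First I would specify the images of the Drinfeld generators $x^{\pm}_{i,r}, \psi^{\pm}_{i,r}$ as follows. For each $i \in I$ and $\nu \in \cQ^{+}$ with $\nu+\alpha_i \in \cQ^{+}$, introduce the Hecke correspondence
$$\mathfrak{P}_i(\nu, \lambda) \subset \M(\nu,\lambda) \times \M(\nu+\alpha_i,\lambda)$$
parametrizing pairs of stable points whose underlying data fit into a short exact sequence of $I$-graded modules with cokernel concentrated at vertex $i$ and of dimension one. The image of $x^{\pm}_{i,r}$ is built from the structure sheaf of $\mathfrak{P}_i(\nu,\lambda)$ twisted by the $r$-th power of the tautological line bundle tracking this cokernel, summed over all admissible $\nu$; the images of the $\psi^{\pm}$-series are given by tautological Chern-class generating functions on the diagonal, coupled with the equivariant parameter coming from the $\C^{\times}$-factor of $\G(\lambda)$.

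The main obstacle is to verify the defining relations of $U_q(L\g)$ in $K^{\G(\lambda)}(Z(\lambda)) \otimes_A \kk$. The Cartan-type relations and the $[x^+_{i,r}, x^-_{j,s}]$ commutator reduce to explicit intersection calculations on triple fiber products of Hecke correspondences, using the projection formula and the smoothness of $\M(\nu,\lambda)$; in particular the case $i \neq j$ is immediate by transversality. The $q$-Serre relations and the higher Drinfeld relations are substantially harder: I would handle them by restricting to rank-$2$ Levi-type subquiver varieties built from only two vertices of $Q$, which reduces the problem to Nakajima's computations in types $\mathsf{A}_1$ and $\mathsf{A}_2$; equivariant localization at $\G(\lambda)$-fixed points can be used to control torsion issues arising after the specialization $v \mapsto q$.

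For cyclicity, observe that $\LL(0,\lambda)$ is a single point, so $w_\lambda = [\mathcal{O}_{\LL(0,\lambda)}]$ is canonically defined and, by the very construction of the quiver variety, it is an extremal weight vector of weight $\lambda$. I would then prove by induction on $\nu \in \cQ^{+}$ that iterated applications of the $x^{-}_{i,r}$'s to $w_\lambda$ generate $K^{\G(\lambda)}(\LL(\nu,\lambda)) \otimes_A \kk$ modulo contributions from strata $\LL(\nu',\lambda)$ with $\nu' < \nu$, using the adjunction between Hecke correspondences and restriction to strata. Finally, the freeness of $\bW(\lambda)$ over $\End_{U_q}(\bW(\lambda)) \cong R(\G(\lambda)) \otimes_A \kk$ follows from the existence of a $\G(\lambda)$-equivariant affine paving of $\LL(\lambda)$ obtained via a Bialynicki-Birula decomposition for a generic cocharacter; this forces $K^{\G(\lambda)}(\LL(\lambda))$ to be free of finite rank over $R(\G(\lambda))$, after which the endomorphism-ring identification follows from a Morita-type argument for the convolution algebra attached to the proper morphism $\pi$.
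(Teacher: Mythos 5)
This statement is cited in the paper as Nakajima's Theorem~9.4.1 (together with surrounding material from \cite{Nakajima01}); the paper gives no proof, so there is no internal argument to compare your sketch against. Judged on its own terms, your outline does capture the broad shape of Nakajima's construction: Hecke correspondences $\mathfrak{P}_i(\nu,\lambda)$ for the raising and lowering generators, tautological-bundle Chern data for the Cartan loop part, and a relation-check carried out in the convolution algebra.

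However, two places in the final stage are genuine gaps rather than compressions. First, you assert that $w_\lambda = [\mathcal{O}_{\LL(0,\lambda)}]$ is an extremal weight vector ``by the very construction of the quiver variety.'' What is automatic from the geometry is only that $w_\lambda$ has weight $\lambda$ and is annihilated by the $x^{+}_{i,r}$ (there is no stratum $\LL(\nu,\lambda)$ with $\nu<0$); the extremal-weight/integrability statement is a nontrivial fact about the module structure and in Nakajima's treatment is established together with, not prior to, the comparison with the Weyl/extremal-weight modules. Second, and more seriously, your derivation of $\End_{U_q}(\bW(\lambda)) \cong R(\G(\lambda))\otimes_A\kk$ does not go through as written. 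An affine paving gives freeness of $K^{\G(\lambda)}(\LL(\lambda))$ over $R(\G(\lambda))$, but that only shows $R(\G(\lambda))\otimes_A\kk$ sits inside the centralizer; the hard direction is that nothing else does, and a vague ``Morita-type argument'' is not a substitute. The standard route is via cyclicity: once $\bW(\lambda)$ is shown to be generated by $w_\lambda$, any $U_q$-endomorphism is pinned down by where it sends $w_\lambda$, which must lie in the $\lambda$-weight space, and that weight space is precisely $(R(\G(\lambda))\otimes_A\kk)\cdot w_\lambda$. This double-centralizer step is logically downstream of the cyclicity claim that you treated separately, so the two pieces need to be ordered and linked rather than argued independently. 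Finally, the reduction of the $q$-Serre and higher Drinfeld relations to rank-$2$ subquivers is a plausible heuristic but is not actually Nakajima's method; his verification is done directly on the Hecke correspondences and is the technical core of the proof, so simply deferring to the $\mathsf{A}_1$, $\mathsf{A}_2$ cases undersells what needs to be checked.
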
        

\begin{Rem}
The module $\bW(\lambda)$
is known to be isomorphic to the global Weyl module
defined by Chari-Pressley \cite{CP01}
and also to the level $0$ extremal weight module
defined by Kashiwara \cite{Kashiwara94}.
In particular, if $\lambda = \varpi_{i}$ for some $i \in I$,
the module $\bW(\varpi_{i})$ is isomorphic
to the affinization of the fundamental module $W(\varpi_{i})$
(see \cite{Kashiwara02}).
\end{Rem}

Take an element $\hlam \in \lP^{+}$
with $\cl (\hlam) = \lambda$ and 
recall the $1$-dimensional subtorus
$T(\hlam) \subset \G(\hlam) \subset \G(\lambda)$. 
We identify $R(T(\hlam)) = A$
via the isomorphism
$\prod_{i \in I}f_{i} \times \mathrm{id} : \C^{\times} \xrightarrow{\cong}
T(\hlam)$.
Let 
$\mathfrak{m}_{\hlam}$ be 
the kernel of the restriction
$
R(\G(\lambda))\otimes_{A} \kk
\to 
R(T(\hlam))\otimes_{A} \kk = \kk.
$
The corresponding specialization 
$
\bW(\lambda) / \mathfrak{m}_{\hlam}
\bW(\lambda)
$
(known as the local Weyl module defined in \cite{CP01})
has a unique simple quotient $L(\hlam)$
in $U_{q} \modfd$.

\begin{Def}[Hernandez-Leclerc \cite{HL15}]
We define the category $\Cc_{Q}$ (resp.~$\Cc_{Q, \beta}$
for each $\beta \in \cQ^{+}$)
to be the minimal Serre full subcategory of
the category $U_{q} \modfd $ of finite-dimensional $U_{q}(L\g)$-modules 
containing
the simple objects $\{ L(\hlam) \mid \hlam \in \lP^{+}_{0} \}$
(resp.~$\{ L(\mm) \mid \mm \in \KP(\beta) \}$),
where $\lP^{+}_{0} = \bigsqcup_{\beta \in \cQ^{+}}
\KP(\beta) \subset \lP^{+}$ is as in Subsection \ref{Ssec:HL}.  
\end{Def}

\begin{Rem}
\label{Rem:Grotisom}
Let $G$ be a linear algebraic group whose Lie algebra is $\g$
and $N$ be  the maximal
unipotent subgroup of $G$ corresponding to 
the positive roots.
Hernandez-Leclerc \cite{HL15} proved that
the category $\Cc_{Q}$ is a monoidal subcategory
and there is an isomorphism from the
complexified Grothendieck ring
$K(\Cc_{Q})_{\C}$ to
the coordinate ring $\C[N]$,
which sends the classes of simple objects
to the elements of the dual canonical basis bijectively.
Actually, Hernandez-Leclerc
established an isomorphism between their quantizations.
We have a block decomposition
$\Cc_{Q} = \bigoplus_{\beta \in \cQ^{+}} \Cc_{Q, \beta}$
satisfying $\Cc_{Q, \beta} \otimes \Cc_{Q, \beta^{\prime}}
\subset
\Cc_{Q, \beta + \beta^{\prime}}$
(see \cite[Section 2.6]{Fujita17}).
This decomposition corresponds
to the weight decomposition 
$\C[N] = \bigoplus_{\beta \in \cQ^{+}}\C[N]_{\beta}$. 
The isomorphism
$
\M_{0, \beta}^{\bullet}
\cong
E_{\beta}$ in Theorem \ref{Thm:HL}
was originally established in order to give a geometric interpretation 
to the isomorphism $K(\Cc_{Q, \beta})_{\C} \cong \C[N]_{\beta}$.
\end{Rem}

Now we fix an element 
$\beta \in \cQ^{+}$. 
In Subsection \ref{Ssec:HL},
we defined the graded quiver variety $\Mg_{\beta}$
with a canonical $\G_{\beta}$-equivariant proper morphism
$\pi_{\beta}: \Mg_{\beta} \to E_{\beta}$,
which is obtained from $\pi : \M(\lambda) \to 
\M_{0}(\lambda)$ with $\lambda = \cl(\hlam_{\beta})$
by taking the fixed locus 
with respect to the action of the $1$-dimensional torus
$T_{\beta} \subset \G_{\beta} \subset \G(\lambda)$.
We form the Steinberg type variety
$\Zg_{\beta} := \Mg_{\beta} \times_{E_{\beta}} \Mg_{\beta} 
= Z(\lambda)^{T_{\beta}}$.
Let $\rr_{\beta}$ be 
the kernel of the restriction
$R(\G_{\beta})\otimes_{A} \kk
\to R(T_{\beta}) \otimes_{A}\kk = \kk$.
Note that the decomposition (\ref{Eq:group}) 
$\G_{\beta} \cong G_{\beta} \times T_{\beta}$ 
yields an isomorphism 
$$
K^{\G_{\beta}}(X)\otimes_{A} \kk 
\cong K^{G_{\beta}}(X)_{\kk}
$$
for any $\G_{\beta}$-variety $X$ with
a trivial $T_\beta$-action.
In particular, we have an isomorphism 
$R(\G_{\beta})\otimes_{A} \kk
\cong R(G_{\beta})_{\kk}$ of $\kk$-algebras,
via which the maximal ideal $\rr_{\beta}
\subset R(\G_{\beta})\otimes_{A} \kk$
corresponds to the augmentation ideal $I \subset 
R(G_{\beta})_{\kk}$.  
Therefore we have an isomorphism 
\begin{equation}
\label{Eq:complG}
\left[
K^{\G_{\beta}}(X)\otimes_{A} \kk 
\right]_{\rr_{\beta}}^{\wedge}
\cong 
\hK^{G_{\beta}}(X)_{\kk},
\end{equation}
where $[ - ]_{\rr_{\beta}}^{\wedge}$
denotes the $\rr_{\beta}$-adic completion. 
We define the $\kk$-algebra homomorphism
$\widehat{\Phi}_{\beta} : U_{q}(L\g) \to \hK^{G_{\beta}}
(Z^{\bullet}_{\beta})_{\kk}$ as the following composition:
\begin{align*}
U_{q}(L\g) 
& \xrightarrow{\Phi_{\lambda}}
K^{\G(\lambda)}(Z(\lambda)) \otimes_{A} \kk
\\
&\to
K^{\G_{\beta}}(Z(\lambda)) \otimes_{A} \kk
&& \text{(restriction to $\G_{\beta} \subset \G(\lambda)$)}
\\
& \to
\left[
K^{\G_{\beta}}(Z(\lambda))\otimes_{A} \kk 
\right]_{\rr_{\beta}}^{\wedge}
&&
\text{($\rr_{\beta}$-adic completion)}
\\
&\cong
\left[
K^{\G_{\beta}}(\Zg_{\beta})\otimes_{A} \kk 
\right]_{\rr_{\beta}}^{\wedge}
&&
\text{(localization theorem)}
\\
&\cong 
\hK^{G_{\beta}}(\Zg_{\beta})_{\kk}.
&& \text{(isomorphism (\ref{Eq:complG}))}
\end{align*}

\begin{Thm}[\cite{Fujita17} Theorem~4.9]
\label{Thm:mine}
The pull-back 
along the homomorphism 
$\widehat{\Phi}_{\beta} : U_{q}(L \g) \to \hK^{G_{\beta}}
(\Zg_{\beta})_{\kk}$ induces an equivalence
$$\widehat{\Phi}_{\beta}^{*}:
\hK^{G_{\beta}}(\Zg_{\beta})_{\kk} \modfd
\xrightarrow{\simeq}
\Cc_{Q, \beta}$$
between the category 
$\hK^{G_{\beta}}(\Zg_{\beta})_{\kk} \modfd$
of finite-dimensional 
$\hK^{G_{\beta}}(\Zg_{\beta})_{\kk}$-modules
and the category $\Cc_{Q, \beta} \subset U_{q}\modfd$.
\end{Thm}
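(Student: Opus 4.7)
The plan is to realize both $\hK^{G_\beta}(\Zg_\beta)_\kk\modfd$ and $\Cc_{Q,\beta}$ as modules over a common geometric extension algebra, and then to transfer an affine highest weight structure that is visible on the geometric side to match the representation-theoretic side.

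First I would analyze the target. By an argument parallel to Subsection~\ref{Ssec:VV} (a Riemann-Roch comparison of $\hK^{G_\beta}(\Zg_\beta)_\kk$ with its Borel-Moore homology completion via a cellular stratification of $\Zg_\beta$, followed by the standard convolution-to-Ext identification), one obtains an isomorphism
$$
\hK^{G_\beta}(\Zg_\beta)_\kk \cong \Ext^{*}_{G_\beta}(\mathcal{K}_\beta, \mathcal{K}_\beta)^{\wedge},
\qquad
\mathcal{K}_\beta := (\pi_\beta)_{*} \underline{\kk}[\dim \Mg_\beta].
$$
Using the Hernandez-Leclerc identification $\Mg_{0,\beta} \cong E_\beta$ together with Proposition~\ref{Prop:fiber}, the decomposition theorem splits $\mathcal{K}_\beta$ into $G_\beta$-equivariant IC summands $\mathcal{IC}_\mm$ indexed by $\mm \in \KP(\beta)$ with all multiplicity spaces non-zero. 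Consequently the simples of $\hK^{G_\beta}(\Zg_\beta)_\kk\modfd$ are canonically parametrized by $\KP(\beta)$.

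Second, I would establish that $\widehat{\Phi}_\beta^*$ lands in $\Cc_{Q,\beta}$ and is essentially surjective. Nakajima's geometric characterization of the simple $U_{q}(L\g)$-modules implies that a simple lies in the image of the pull-back along the composition $U_{q} \to K^{\G(\lambda)}(Z(\lambda))\otimes_{A} \kk \to [\,K^{\G_\beta}(Z(\lambda))\otimes_{A} \kk\,]_{\rr_\beta}^{\wedge}$ precisely when its $l$-weight support is contained in the $T_\beta$-fixed locus, that is, exactly for the simples $L(\mm)$ with $\mm \in \KP(\beta)$. One then matches $L(\mm)$ with the convolution-side simple labelled by $\mm$ by identifying the specialized local Weyl module $\bW(\lambda)/\mathfrak{m}_{\hlam_\beta}\bW(\lambda)$, whose head is $L(\mm)$, with the geometric standard module constructed from $K^{G_\beta}(\pi_\beta^{-1}(x_\mm))_\kk$ via Proposition~\ref{Prop:fiber}.

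Finally, I would upgrade the bijection of simples to an exact equivalence through the theory of affine highest weight categories. On the geometric side, the standard modules $\Delta(\mm)$ are built from $\pi_\beta$-fibers over generic orbit representatives, costandards come from duality, and the partial order is the closure order of the orbits $\mathbb{O}_\mm \subset E_\beta$; the resulting affine quasi-hereditary structure follows from standard geometric arguments using exactness for affine bundles and Ext-vanishing read off from the stratification. The main obstacle is the parallel verification on the $\Cc_{Q,\beta}$ side: one must show that the specialized local Weyl modules form a standard system with respect to the same partial order, which requires a Demazure-type flag argument for Kashiwara's extremal weight modules together with control of $\Ext$ groups between simples in $\Cc_{Q,\beta}$, transferred back from the geometric computation. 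Once this compatibility is in place, the equivalence is concluded by the reconstruction theorem for affine highest weight categories.
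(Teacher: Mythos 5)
This statement is imported verbatim from \cite{Fujita17} (Theorem 4.9) and is not proved in the present paper, so there is no in-text proof here to compare against. Your overall strategy does align with what the introduction attributes to \cite{Fujita17} --- an affine highest weight category argument --- and the ingredients in your first two steps (the completed Ext-algebra realization of $\hK^{G_\beta}(\Zg_\beta)_\kk$, the parametrization of simples by $\KP(\beta)$ via the decomposition theorem applied to $(\pi_\beta)_*\underline{\kk}$, and matching simples with Nakajima's theory) all appear in Subsection~\ref{Ssec:Nakajima} in some form.

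The essential gap is in your third step. Asserting that $\Cc_{Q,\beta}$ carries an affine highest weight structure with standards given by specialized local Weyl modules and partial order the orbit-closure order on $\{\mathbb{O}_\mm\}_{\mm\in\KP(\beta)}$ is precisely the hard content of the cited theorem, not a technical verification that can be flagged as an ``obstacle'' and deferred: the reconstruction theorem you invoke at the end requires this as a hypothesis, so your sketch never actually closes the argument. Moreover, for the pull-back $\widehat{\Phi}_\beta^*$ to be fully faithful (not merely faithful and exact) one needs the image of $\widehat{\Phi}_\beta$ to be dense in the completed convolution algebra $\hK^{G_\beta}(\Zg_\beta)_\kk$; this nontrivial surjectivity statement is never addressed in your sketch. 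Finally, the phrase in your second step about ``$\ell$-weight support contained in the $T_\beta$-fixed locus'' is imprecise: the characterization of simples in the image of the pull-back comes from their support over the central subalgebra $R(\G(\lambda))\otimes_A\kk$ at the maximal ideal $\rr_\beta$, translated via Nakajima's description of simple quotients of localized standard modules indexed by central characters into the assertion that the simples are exactly $L(\mm)$ for $\mm\in\KP(\beta)$; it is not a statement about supports as subsets of the variety.
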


The next proposition is
a counterpart of Proposition~\ref{Prop:RRflag}.

\begin{Prop}
\label{Prop:RRquiver}
The Riemann-Roch homomorphism gives an
isomorphism of $\widehat{R}(G_{\beta})_{\kk}$-algebras:
$$
\RR^{G_{\beta}} :
\hK^{G_{\beta}}(\Zg_{\beta})_{\kk}
\xrightarrow{\cong}
H_{*}^{G_{\beta}}(\Zg_{\beta}, \kk)^{\wedge}.
$$
\end{Prop}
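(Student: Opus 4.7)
The plan is to mimic the proof of Proposition~\ref{Prop:RRflag}. By Proposition~\ref{Prop:RR} the map $\RR^{G_{\beta}}$ is an algebra homomorphism, and since it differs from the local Chern character map $(\ch^{G_{\beta}})^{\Mg_{\beta} \times \Mg_{\beta}}_{\Zg_{\beta}}$ by multiplication by the invertible equivariant Todd class $p_{1}^{*}\Td^{G_{\beta}}_{\Mg_{\beta}}$, it suffices to prove that the local Chern character map itself is an isomorphism of $\widehat{R}(G_{\beta})_{\kk}$-modules.

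To do this, I would construct a $G_{\beta}$-stable algebraic stratification $\Zg_{\beta} = \bigsqcup_{k} \Zg_{\beta}^{k}$ (with a compatible ordering so that the union $\Zg_{\beta}^{\le k} = \bigsqcup_{j \le k} \Zg_{\beta}^{j}$ is closed in $\Zg_{\beta}^{\le k+1}$) such that each stratum $\Zg_{\beta}^{k}$ is a $G_{\beta}$-equivariant affine bundle over a smooth $G_{\beta}$-variety with vanishing odd-degree equivariant Borel-Moore homology. The natural starting point is the stratification induced by the $G_{\beta}$-orbit decomposition $E_{\beta} = \bigsqcup_{\mm \in \KP(\beta)} \mathbb{O}_{\mm}$ from~(\ref{Eq:decE}): setting $\Mg_{\beta}^{\mm} := \pi_{\beta}^{-1}(\mathbb{O}_{\mm})$ and $\Zg_{\beta}^{\mm, \mm'} := \Mg_{\beta}^{\mm} \times_{E_{\beta}} \Mg_{\beta}^{\mm'}$, Proposition~\ref{Prop:fiber} describes the fibers of $\pi_{\beta}|_{\Mg_{\beta}^{\mm}}$ as $\Lg(\mm)$. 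Nakajima's results on the geometry of graded quiver varieties provide an affine paving of each $\Lg(\mm)$ (via the Bialynicki-Birula decomposition for a generic cocharacter into the center of $G(\mm)$), and this can be upgraded to a $G_{\beta}$-equivariant refinement of the $\Zg_{\beta}^{\mm, \mm'}$ into iterated affine bundles over smooth $G_{\beta}$-homogeneous pieces of the orbits $\mathbb{O}_{\mm}$.

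Once such a stratification is in hand, the proof proceeds by induction on $k$ exactly as in the proof of Proposition~\ref{Prop:RRflag}. The cellular fibration lemma~\cite[5.5.1]{CG97} applied to equivariant $K$-theory together with Proposition~\ref{Prop:chex} yields a commutative diagram with exact rows
$$
\xy
\xymatrix{
0 \ar[r] & \hK^{G_{\beta}}(\Zg_{\beta}^{\le k-1})_{\kk} \ar[r] \ar[d]^-{\ch^{G_{\beta}}} & \hK^{G_{\beta}}(\Zg_{\beta}^{\le k})_{\kk} \ar[r] \ar[d]^-{\ch^{G_{\beta}}} & \hK^{G_{\beta}}(\Zg_{\beta}^{k})_{\kk} \ar[r] \ar[d]^-{\ch^{G_{\beta}}} & 0 \\
0 \ar[r] & H_{*}^{G_{\beta}}(\Zg_{\beta}^{\le k-1}, \kk)^{\wedge} \ar[r] & H_{*}^{G_{\beta}}(\Zg_{\beta}^{\le k}, \kk)^{\wedge} \ar[r] & H_{*}^{G_{\beta}}(\Zg_{\beta}^{k}, \kk)^{\wedge} \ar[r] & 0.
}
\endxy
$$
The rightmost vertical map is an isomorphism because each stratum is a $G_{\beta}$-equivariant affine bundle over a base for which the Chern character is an isomorphism, and the bottom row is exact by the odd-vanishing assumption on the strata. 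The five lemma then propagates the isomorphism inductively.

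The main obstacle is the construction of the cellular stratification: unlike the Steinberg variety $\cZ_{\beta}$, where the Bruhat decomposition of the partial flag varieties gives an immediate $G_{\beta}$-equivariant affine paving, $\Zg_{\beta}$ has no obvious analogue. I would exploit the Hernandez-Leclerc identification $\Mg_{0,\beta} \cong E_{\beta}$ of Theorem~\ref{Thm:HL} to transport Nakajima's Bialynicki-Birula type paving of the central fiber $\Lg(\hlam_{\beta})$ along each orbit $\mathbb{O}_{\mm}$, using the transitive $G_{\beta}$-action on $\mathbb{O}_{\mm}$ and the compatibility with the stabilizer structure in Proposition~\ref{Prop:fiber}. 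As a fallback, one can avoid the explicit stratification altogether by instead invoking the purity of $\Mg_{\beta}$ (which guarantees that $H^{G_{\beta}}_{*}(\Zg_{\beta}, \kk)$ is concentrated in even degrees) together with a degeneration of the Atiyah-Hirzebruch type spectral sequence relating equivariant $K$-theory and equivariant Borel-Moore homology after completion, which forces $\ch^{G_{\beta}}$ to be an isomorphism.
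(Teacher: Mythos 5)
Your overall framework matches the paper's: reduce to the local Chern character via invertibility of the Todd factor, stratify $\Zg_{\beta}$ using the $G_{\beta}$-orbit decomposition of $E_{\beta}$, establish the isomorphism on each stratum, and conclude by the five lemma applied to the short exact sequences from Proposition~\ref{Prop:chex} and the cellular fibration lemma. That much is correct and aligned with the paper.

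Where you diverge, and where there is a gap, is at the per-stratum step. You treat the absence of a Bruhat-type decomposition of $\Zg_{\beta}$ as "the main obstacle" and propose to manufacture a $G_{\beta}$-equivariant affine paving of each $\Zg_{\beta}^{\mm,\mm'}$ by transporting a Bialynicki-Birula decomposition of $\Lg(\mm)$ along the orbit $\mathbb{O}_{\mm}$. This is never carried out, and it is not what the paper does. The paper instead observes that each stratum has the form
$$
\Zg_{\beta}\big|_{\mathbb{O}_{\mm}} \cong G_{\beta} \times^{\Stab_{G_{\beta}}(x_{\mm})} \bigl( \pi_{\beta}^{-1}(x_{\mm}) \times \pi_{\beta}^{-1}(x_{\mm}) \bigr),
$$
so that by Proposition~\ref{Prop:fiber} and equivariant reduction along the fibration $G_{\beta}/\Stab_{G_{\beta}}(x_{\mm})$ one obtains
$$
K^{G_{\beta}}(\Zg_{\beta}^{k}) \cong K^{G(\mm_{k})}\bigl(\Lg(\mm_{k}) \times \Lg(\mm_{k})\bigr), \qquad
H_{*}^{G_{\beta}}(\Zg_{\beta}^{k}, \kk) \cong H_{*}^{G(\mm_{k})}\bigl(\Lg(\mm_{k}) \times \Lg(\mm_{k}), \kk\bigr),
$$
and then simply cites Nakajima's Theorem~7.4.1 from \cite{Nakajima01} to obtain the Chern character isomorphism for $\Lg(\mm_{k}) \times \Lg(\mm_{k})$ (which also supplies the odd-vanishing needed for exactness of the bottom row). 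In other words, the required statement about the central fibers is already a black box in the literature, and no further cellular refinement of the strata is needed. Your proposed refinement might be made to work, but as written it is speculative; and your "fallback" via purity plus degeneration of an Atiyah-Hirzebruch-type spectral sequence is likewise unsubstantiated in this equivariant completed setting. The concrete fix is to replace the affine-paving construction with the reduction to $\Lg(\mm) \times \Lg(\mm)$ and the appeal to Nakajima's theorem.
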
 
\begin{proof}
As in the proof of Proposition~\ref{Prop:RRflag}, 
it suffices to prove that
the equivariant Chern character map 
$(\ch^{G_{\beta}})^{\Mg_{\beta} 
\times \Mg_{\beta}}_{Z^{\bullet}_{\beta}}:
\hK^{G_{\beta}}(\Zg_{\beta})_{\kk}
\to
H_{*}^{G_{\beta}}(\Zg_{\beta}, \kk)^{\wedge}$
is an isomorphism.

Note that the $G_{\beta}$-orbit stratification (\ref{Eq:decE}) 
yields a stratification of $\Zg_{\beta}$:
$$
Z^{\bullet}_{\beta} = \bigsqcup_{\mm \in \KP(\beta)}
Z^{\bullet}_{\beta} |_{\mathbb{O}_{\mm}},
\quad 
Z^{\bullet}_{\beta} |_{\mathbb{O}_{\mm}}
\cong G_{\beta} \times^{\Stab_{G_{\beta}}(x_{\mm})} 
\left( \pi_{\beta}^{-1}(x_{\mm}) \times \pi_{\beta}^{-1}(x_{\mm}) 
\right).
$$ 
Fix a total ordering
$\KP(\beta) 
= 
\{\mm_{1}, \mm_{2}, \ldots, \mm_{s}\}$
such that
we have $\mathbb{O}_{k} \subset \overline{\mathbb{O}}_{l}$
only if $k < l$.
Set 
$Z_{\beta}^{k} := \Zg_{\beta}|_{\mathbb{O}_{\mm_k}}$
and 
$Z_{\beta}^{\le k} := \bigsqcup_{j \le k} Z_{\beta}^{j}$.
Then
the variety $Z_{\beta}^{\le k-1}$ is a closed subvariety
of $Z_{\beta}^{\le k}$
whose complement is $Z_{\beta}^{k}$. 
 By Proposition~\ref{Prop:fiber} and the reduction, we have 
\begin{align*}
K^{G_{\beta}}(Z^{k}_{\beta})
&\cong 
K^{G(\mm_k)}(\Lg(\mm_k) \times \Lg(\mm_k)), \\
H_{*}^{G_{\beta}}(Z^{k}_{\beta}, \kk)
& \cong
H_{*}^{G(\mm_k)}(\Lg(\mm_k) \times \Lg(\mm_k), \kk)
\end{align*}
for each $k$.
Then, using \cite[Theorem~7.4.1]{Nakajima01},
we can prove that the equivariant Chern character map
gives an isomorphism
$\ch^{G_{\beta}} : 
\hK^{G_{\beta}} (Z_{\beta}^{k})_{\kk}
\xrightarrow{\cong}
H_{*}^{G_{\beta}}(Z_{\beta}^{k}, \kk)^{\wedge}
$
for each $k$. Moreover, we obtain
the following commutative diagram with exact rows
for each $k$:
$$
\xy
\xymatrix{
0
\ar[r]
&
\hK^{G_{\beta}}(Z_{\beta}^{\le k-1})_{\kk}
\ar[r]
\ar[d]^-{\ch^{G_{\beta}}}
& 
\hK^{G_{\beta}}(Z_{\beta}^{\le k})_{\kk}
\ar[r]
\ar[d]^-{\ch^{G_{\beta}}}
&
\hK^{G_{\beta}}(Z_{\beta}^{k})_{\kk}
\ar[r]
\ar[d]^-{\ch^{G_{\beta}}}
&
0
\\
0
\ar[r]
&
H_{*}^{G_{\beta}}(Z_{\beta}^{\le k-1},
\kk)^{\wedge}
\ar[r]
&
H_{*}^{G_{\beta}}(Z_{\beta}^{\le k}, \kk)^{\wedge}
\ar[r]
&
H_{*}^{G_{\beta}}(Z_{\beta}^{k}, \kk)^{\wedge}
\ar[r]
&
0.
}
\endxy
$$ 
By induction on $k$,  
the equivariant Chern character map
gives an isomorphism 
$\ch^{G_{\beta}} :
\hK^{G_{\beta}}(Z_{\beta}^{\le k})_{\kk}
\xrightarrow{\cong}
H_{*}^{G_{\beta}}(Z_{\beta}^{\le k}, \kk)^{\wedge}$
for all $k$.
\end{proof}

We consider the proper push-forward 
$$
\mathcal{L}_{\beta}^{\bullet} := (\pi_{\beta})_{*} \underline{\kk}
$$
of the 
trivial local system $ \underline{\kk}$ on $\Mg_\beta$.
By the decomposition theorem,
we have
$$
\mathcal{L}_{\beta}^{\bullet} \cong \bigoplus_{\mm \in \KP(\beta)}
L_{\mm}^{\bullet} \otimes_{\kk} \mathcal{IC}_{\mm}
=
\bigoplus_{\mm \in \KP(\beta)} \bigoplus_{k \in \Z}
L^{\bullet}_{\mm, k} \otimes_{\kk} \mathcal{IC}_{\mm}[k],
$$
where $L_{\mm}^{\bullet} = \bigoplus_{k} L^{\bullet}_{\mm, k}$
is a finite-dimensional 
graded $\kk$-vector space, which is 
known to be non-zero for each $\mm$
(see \cite[Theorem~14.3.2]{Nakajima01}). 
Similarly to the previous subsection,
we have a standard isomorphism of $\kk$-algebras
\begin{equation}
\label{Eq:isomG2}
\Ext_{G_{\beta}}^{*}(\mathcal{L}_{\beta}^{\bullet}, 
\mathcal{L}_{\beta}^{\bullet}) \cong 
H_{*}^{G_{\beta}}(\Zg_{\beta}, \kk),
\end{equation}
which also induces an isomorphism between completions.
\begin{Cor}
We have the following isomorphisms of $\kk$-algebras:
$$
\Ext_{G_{\beta}}^{*}(\mathcal{L}_{\beta}^{\bullet}, 
\mathcal{L}_{\beta}^{\bullet})^{\wedge}
\cong
H_{*}^{G_{\beta}}(\Zg_{\beta}, \kk)^{\wedge}
\cong 
\hK^{G_{\beta}}(\Zg_{\beta})_{\kk}.
$$
\end{Cor}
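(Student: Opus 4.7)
The statement is parallel to the unlabeled corollary concluding the previous subsection for $\cZ_{\beta}$, so the plan is to import both of its ingredients in the graded quiver variety setting. Concretely, I would establish the first isomorphism $\Ext_{G_{\beta}}^{*}(\mathcal{L}_{\beta}^{\bullet}, \mathcal{L}_{\beta}^{\bullet})^{\wedge} \cong H_{*}^{G_{\beta}}(\Zg_{\beta}, \kk)^{\wedge}$ by taking the grading completion of the standard isomorphism \eqref{Eq:isomG2}, and the second isomorphism $H_{*}^{G_{\beta}}(\Zg_{\beta}, \kk)^{\wedge} \cong \hK^{G_{\beta}}(\Zg_{\beta})_{\kk}$ by invoking Proposition~\ref{Prop:RRquiver}, which shows that the equivariant Riemann--Roch homomorphism $\RR^{G_{\beta}}$ is an isomorphism of $\widehat{R}(G_{\beta})_{\kk}$-algebras.

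For the first isomorphism, the key point is that the $\Z$-grading on the Yoneda algebra is bounded from below (since the perverse sheaves $\mathcal{IC}_{\mm}$ sit in bounded cohomological degrees and $\pi_{\beta}$ is proper), so completing along the grading is well defined and commutes with the standard identification between $\Ext$-groups and convolution homology. Since \eqref{Eq:isomG2} is already an isomorphism of $\kk$-algebras, passing to grading completions of both sides is formally automatic and the algebra structure survives. For the second isomorphism, Proposition~\ref{Prop:RRquiver} is proved by Chern-character/cellular-fibration arguments on the stratification of $\Zg_{\beta}$ by $G_{\beta}$-orbits in $E_{\beta}$, combined with Nakajima's computation \cite[Theorem~7.4.1]{Nakajima01} describing the fibers $\pi_{\beta}^{-1}(x_{\mm})$.

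The main obstacle, if any, is checking that the two different notions of completion (grading completion on the Borel--Moore side, $I$-adic completion on the $K$-theoretic side via \eqref{Eq:complG}) are matched by the Riemann--Roch map. This is already handled inside the proof of Proposition~\ref{Prop:RRquiver}: the augmentation ideal of $R(G_\beta)_\kk$ maps under the Chern character to the positive-degree part of $H^{*}_{G_\beta}(\mathrm{pt},\kk)$, so the $I$-adic and grading filtrations on the equivariant (co)homology are intertwined, and the induced map on completions is an isomorphism. Once this is acknowledged, the two isomorphisms of the corollary can simply be chained.

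Since all the hard analytic work (the decomposition theorem for $\pi_{\beta}$, Nakajima's description of fixed-point fibers, the cellular-fibration lemma for equivariant $K$-theory, and compatibility of Riemann--Roch with convolution via Proposition~\ref{Prop:RR}) has been carried out earlier in the section, the proof itself is essentially a two-line assembly: combine the grading completion of \eqref{Eq:isomG2} with Proposition~\ref{Prop:RRquiver}. I would therefore present it as a brief corollary-style proof that simply cites these two facts in sequence, mirroring verbatim the structure of the analogous corollary for $\cZ_\beta$ and $\widehat{H}_Q(\beta)$ in the previous subsection.
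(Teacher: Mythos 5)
Your proposal is correct and matches the paper's (implicit) argument exactly: the first isomorphism is obtained by completing the standard identification \eqref{Eq:isomG2} along the grading, and the second is precisely the conclusion of Proposition~\ref{Prop:RRquiver}. The paper presents the corollary with no further proof beyond the remark that \eqref{Eq:isomG2} passes to completions, so your two-step assembly is the intended reading.
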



\section{Dynkin quiver type quantum affine Schur-Weyl duality}
\label{Sec:SW}

\subsection{Geometric construction of a bimodule
and a Morita equivalence}

We keep the notation in the previous sections.
In particular, $\kk = \Q(q).$
We fix an element $\beta = \sum_{i \in I}
d_{i} \alpha_{i}
\in \cQ^{+}$ and
put
$\lambda := \cl(\hlam_{\beta})
\in \cP^{+}$.
From the two $G_{\beta}$-equivariant 
proper morphisms
$
\pi_{\beta} : \Mg_{\beta} \to E_{\beta}
$
and 
$
\mu_{\beta} : \F_{\beta} \to E_{\beta},
$
we form the fiber product
$
\Mg_{\beta} \times_{E_{\beta}} \F_{\beta}
$.
The convolution products make its 
completed $G_{\beta}$-equivariant
$K$-group 
$
\hK^{G_{\beta}}(\Mg_{\beta} 
\times_{E_{\beta}} \F_{\beta})_{\kk}$
into a $(\hK^{G_{\beta}}(\Zg_{\beta})_{\kk},
\hK^{G_{\beta}}(\cZ_{\beta})_{\kk})$-bimodule.
More precisely, the convolution products give 
$\kk$-algebra homomorphisms 
$$
\hK^{G_{\beta}}(\Zg_{\beta})_{\kk}
\to
\End \left(\hK^{G_{\beta}}(\Mg_{\beta} 
\times_{E_{\beta}} \F_{\beta})_{\kk}\right)
\leftarrow
\hK^{G_{\beta}}(\cZ_{\beta})_{\kk}^{\mathrm{op}},
$$
whose images commute with each other.
In the rest of this subsection, we prove that 
this bimodule induces a Morita equivalence.

For a moment, 
we focus on a component 
$\Mg_{\beta} \times_{E_{\beta}} \F_{\ii}$
for a fixed $\ii \in I^{\beta}$.
Using the isomorphism
$\B_{\ii} 
\cong G_{\beta}/B_{\ii}$
with $B_{\ii} = \Stab_{G_{\beta}}(F^{\bullet}_{\ii})$,
we have
\begin{align}
\Mg_{\beta} \times_{E_{\beta}} \F_{\ii}
&\cong \Mg_{\beta} \times_{E_{\beta}} 
\left( G_{\beta} \times^{B_{\ii}} \mathrm{pr}_{1}^{-1}(F_{\ii}^{\bullet})
\right) \nonumber \\
&\cong
G_{\beta} \times^{B_{\ii}} \left(
\Mg_{\beta} \times_{E_{\beta}} 
\mathrm{pr}_{1}^{-1}(F_{\ii}^{\bullet})
\right), \label{Eq:isomatr}
\end{align}
where $\mathrm{pr}_{1}$ is the projection 
$\F_{\ii} \ni (F^{\bullet}, x)
\mapsto F^{\bullet} \in \B_{\ii}$.
We define a $1$-parameter subgroup
$
\rho_{\ii} : \C^{\times} \to H_{\beta} 
$
by $\rho_{\ii}(t) v_{w_{\ii}(k)} := t^{k} v_{w_{\ii}(k)}$
for $t \in \C^{\times}$.
Note that this depends on the choice of $w_{\ii} \in \SG_{d}$
fixed in Subsection \ref{Ssec:VV}.
We observe that  
$$
\mathrm{pr}_{1}^{-1}(F_{\ii}^{\bullet})
\cong
\{ x \in E_{\beta} \mid x(F_{\ii}^{k}) \subset F_{\ii}^{k}, \, \forall k \}
 =
\left\{ x \in E_{\beta} \; \middle| \; 
\lim_{t \to 0} \rho_{\ii}(t) x = 0 \right\}.
$$
Therefore we get
$$
\Mg_{\beta} \times_{E_{\beta}} 
\mathrm{pr}_{1}^{-1}(F_{\ii}^{\bullet})
\cong \left\{ x \in \Mg_{\beta}\; \middle| \;
\lim_{t \to 0} \rho_{\ii}(t) \pi_{\beta}(x) = 0 \right\}.
$$
Since the morphism
$\pi_{\beta} : \Mg_{\beta} \to E_{\beta}$
is the $T_{\beta}$-fixed part
of $\pi : \M(\lambda) \to \M_{0}(\lambda)$,
it is natural to consider the following 
subvariety of $\M(\lambda)$:
$$
\widetilde{\mathfrak{Z}}(\lambda ; w_{\ii})
:= \left\{ x \in \M(\lambda) \; \middle| \; \lim_{t \to 0} \rho_{\ii}(t) \pi(x)
= 0 \in \M_{0}(\lambda) \right\},
$$
which turns out to be 
the tensor product variety introduced 
by Nakajima \cite{Nakajima01t}.
Since the subgroups $T_{\beta}$ and 
$\rho_{\ii}(\C^{\times})$
commute with each other, 
we have
\begin{equation}
\label{Eq:obs2}
\Mg_{\beta} \times_{E_{\beta}} 
\mathrm{pr}_{1}^{-1}(F_{\ii}^{\bullet})
\cong 
\widetilde{\mathfrak{Z}}(\lambda ; w_{\ii})^{T_{\beta}}.
\end{equation}
Using (\ref{Eq:isomatr}), (\ref{Eq:obs2}) and the reduction,
we obtain
\begin{align}
K^{G_{\beta}}(\Mg_{\beta} \times_{E_{\beta}} \F_{\ii})
&\cong 
K^{H_{\beta}}(\tZ
(\lambda ; w_{\ii})^{T_{\beta}}), 
\label{Eq:redK} \\
H_{*}^{G_{\beta}}(\Mg_{\beta} \times_{E_{\beta}} \F_{\ii}, 
\kk)
&\cong 
H_{*}^{H_{\beta}}(\tZ(\lambda; w_{\ii})^{T_{\beta}}, 
\kk). \label{Eq:redH}
\end{align}

\begin{Prop}
\label{Prop:chbimodule}
The $G_{\beta}$-equivariant Chern character map gives 
an isomorphism:
$$
\ch^{G_{\beta}} : 
\hK^{G_{\beta}}(\Mg_{\beta} \times_{E_{\beta}} \F_{\ii})_{\kk}
\xrightarrow{\cong}
H_{*}^{G_{\beta}}(\Mg_{\beta} \times_{E_{\beta}} \F_{\ii}, 
\kk)^{\wedge}.
$$
\end{Prop}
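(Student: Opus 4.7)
The plan is to mimic the strategy used in the proofs of Propositions~\ref{Prop:RRflag} and~\ref{Prop:RRquiver}, namely to reduce the claim to a stratum-by-stratum Chern character isomorphism via the orbit stratification~\eqref{Eq:decE} on $E_{\beta}$ and then conclude by an inductive five-lemma argument using long exact sequences in both equivariant $K$-theory (cellular fibration lemma \cite[5.5.1]{CG97}) and equivariant Borel-Moore homology (Proposition~\ref{Prop:chex}).

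First, I would pull back the stratification $E_{\beta} = \bigsqcup_{\mm \in \KP(\beta)} \mathbb{O}_{\mm}$ along the proper morphism $\Mg_{\beta} \times_{E_{\beta}} \F_{\ii} \to E_{\beta}$ to obtain a $G_{\beta}$-stable stratification
$$
\Mg_{\beta} \times_{E_{\beta}} \F_{\ii} = \bigsqcup_{\mm \in \KP(\beta)} Y_{\mm}, \qquad Y_{\mm} := (\Mg_{\beta} \times_{E_{\beta}} \F_{\ii})|_{\mathbb{O}_{\mm}},
$$
where each stratum admits a description
$$
Y_{\mm} \cong G_{\beta} \times^{\Stab_{G_{\beta}}(x_{\mm})} \bigl( \pi_{\beta}^{-1}(x_{\mm}) \times \mu_{\ii}^{-1}(x_{\mm}) \bigr).
$$
After fixing a total ordering $\KP(\beta) = \{\mm_{1}, \ldots, \mm_{s}\}$ compatible with the orbit closure order (so that $\overline{\mathbb{O}}_{\mm_{k}} \subset \bigsqcup_{j \le k} \mathbb{O}_{\mm_{j}}$), I would set $Y^{\le k} := \bigsqcup_{j \le k} Y_{\mm_{j}}$ and obtain commutative diagrams of short exact sequences linking the completed $K$-theory and completed Borel-Moore homology of $Y^{\le k-1}$, $Y^{\le k}$, and $Y_{\mm_{k}}$, with the Chern character map between them.

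For the stratumwise base step, I would apply Proposition~\ref{Prop:fiber} to identify $\pi_{\beta}^{-1}(x_{\mm}) \cong \Lg(\mm)$ and to reduce the computation from $\Stab_{G_{\beta}}(x_{\mm})$ to its maximal reductive quotient $G(\mm)$. This produces isomorphisms
\begin{align*}
\hK^{G_{\beta}}(Y_{\mm})_{\kk} &\cong \hK^{G(\mm)}\bigl(\Lg(\mm) \times \mu_{\ii}^{-1}(x_{\mm})\bigr)_{\kk},\\
H_{*}^{G_{\beta}}(Y_{\mm}, \kk)^{\wedge} &\cong H_{*}^{G(\mm)}\bigl(\Lg(\mm) \times \mu_{\ii}^{-1}(x_{\mm}), \kk\bigr)^{\wedge},
\end{align*}
compatible with the Chern character. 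Now $\Lg(\mm)$ carries the cellular structure of \cite[Theorem~7.3.5]{Nakajima01} and $\mu_{\ii}^{-1}(x_{\mm})$ is a quiver analogue of a Springer fiber admitting an affine paving by the Bia{\l}ynicki-Birula strata coming from the cocharacter $\rho_{\ii}$ applied to the $T_{\beta}$-fixed locus description in~\eqref{Eq:obs2}. Hence both factors have only even Borel-Moore homology, and applying \cite[Theorem~7.4.1]{Nakajima01} to the product yields the Chern character isomorphism on each stratum.

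The hardest step is the stratumwise base case: one must establish the equivariant Chern character isomorphism for the product $\Lg(\mm) \times \mu_{\ii}^{-1}(x_{\mm})$, and in particular justify the affine paving of $\mu_{\ii}^{-1}(x_{\mm})$ in a $G(\mm)$-equivariant manner. Once this is in hand, descending induction on $k$ via the five lemma propagates the isomorphism from $Y_{\mm_{k}}$ and $Y^{\le k-1}$ to $Y^{\le k}$, and setting $k = s$ gives the desired conclusion for $Y = \Mg_{\beta} \times_{E_{\beta}} \F_{\ii}$.
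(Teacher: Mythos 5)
Your overall strategy (pull back the orbit stratification, reduce stratum-by-stratum, and run the cellular fibration/five-lemma induction) is reasonable and mirrors the proofs of Propositions~\ref{Prop:RRflag} and~\ref{Prop:RRquiver}, but it is \emph{not} the route the paper takes, and the base case of your induction has a genuine gap. The paper's own proof is much shorter: it invokes the reduction isomorphisms~\eqref{Eq:redK} and~\eqref{Eq:redH}, which were established in the paragraphs immediately preceding the proposition and identify $\hK^{G_{\beta}}(\Mg_{\beta}\times_{E_{\beta}}\F_{\ii})_{\kk}$ with $\hK^{H_{\beta}}(\tZ(\lambda;w_{\ii})^{T_{\beta}})_{\kk}$ (and likewise for Borel--Moore homology), and then appeals to a $T_{\beta}$-fixed-part analogue of Nakajima's Theorem~3.10(1) in the tensor product varieties paper, where the even-homology and Chern-character isomorphism statements for $\tZ(\lambda;w_{\ii})$ are already worked out. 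In other words, the paper transfers the problem to the tensor product variety, where the requisite ``$\alpha$-partition into affine-bundle strata'' technology is available off the shelf, rather than re-deriving a stratification on $\Mg_{\beta}\times_{E_{\beta}}\F_{\ii}$ from scratch.

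The concrete gap in your argument is the claimed affine paving of $\mu_{\ii}^{-1}(x_{\mm})$ ``by the Bia{\l}ynicki-Birula strata coming from the cocharacter $\rho_{\ii}$.'' The one-parameter subgroup $\rho_{\ii}(\C^{\times})\subset H_{\beta}$ does \emph{not} stabilize the point $x_{\mm}\in E_{\beta}$ in general (it is used precisely because $\lim_{t\to 0}\rho_{\ii}(t)x=0$ for $x$ in the attracting set, not because it fixes individual orbit representatives), so it does not act on the Springer-type fiber $\mu_{\ii}^{-1}(x_{\mm})$, and the proposed BB decomposition of that fiber does not exist as described. One could try to salvage the base case by citing an independent result that quiver Springer fibers admit affine pavings, or by performing the reduction to $G(\mm)$-equivariance more carefully and then arguing as in the paper's Proposition~\ref{Prop:RRquiver}; but as written the key stratumwise isomorphism is not established. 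Given that~\eqref{Eq:redK} and~\eqref{Eq:redH} were set up just before the proposition, the intended (and cleaner) argument is to use them and quote the tensor-product-variety result.
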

\begin{proof}
Thanks to (\ref{Eq:redK}) and (\ref{Eq:redH}),
it is enough to show that the $H_{\beta}$-equivariant
Chern character map 
$$
\ch^{H_{\beta}}:
\hK^{H_{\beta}}(\tZ
(\lambda ; w_{\ii})^{T_{\beta}})_{\kk}
\to
H_{*}^{H_{\beta}}(\tZ(\lambda, w_{\ii})^{T_{\beta}}, 
\kk)^{\wedge}
$$ 
is an isomorphism.
This latter assertion follows from 
a $T_{\beta}$-fixed part analogue of
\cite[Theorem~3.10.~(1)]{Nakajima01t}.
\end{proof}

The $G_{\beta}$-equivariant Borel-Moore homology
$H_{*}^{G_{\beta}}(\Mg_{\beta} \times_{E_{\beta}} \F_{\beta}, \kk)$
becomes a 
$(H_{*}^{G_{\beta}}(\Zg_{\beta}, \kk),
H_{*}^{G_{\beta}}(\cZ_{\beta}, \kk))$-bimodule
by the convolution products, similarly to the case of $K$-groups.
On the other hand, 
the $\Ext$-group 
$\Ext_{G_{\beta}}^{*}(\mathcal{L}_{\beta}^{\bullet},
\mathcal{L}_{\beta})$
becomes a 
$(
\Ext_{G_{\beta}}^{*}(\mathcal{L}_{\beta}^{\bullet}, 
\mathcal{L}_{\beta}^{\bullet}),
\Ext_{G_{\beta}}^{*}(\mathcal{L}_{\beta}, 
\mathcal{L}_{\beta}
))$-bimodule by the Yoneda products. 
This bimodule 
$\Ext_{G_{\beta}}^{*}(\mathcal{L}_{\beta}^{\bullet},
\mathcal{L}_{\beta})$
gives a Morita equivalence 
between  
$\Ext_{G_{\beta}}^{*}(\mathcal{L}_{\beta}^{\bullet}, 
\mathcal{L}_{\beta}^{\bullet})$
and $
\Ext_{G_{\beta}}^{*}(\mathcal{L}_{\beta}, 
\mathcal{L}_{\beta})$
because $\mathcal{IC}_{\mm}$ 
appears as a non-zero direct summand
of both $\mathcal{L}_{\beta}$ and $\mathcal{L}^{\bullet}_{\beta}$
for each $\mm \in \KP(\beta)$.
Moreover, we have a standard isomorphism
\begin{equation}
\label{Eq:isomG3}
H_{*}^{G_{\beta}}(\Mg_{\beta} \times_{E_{\beta}} \F_{\beta}, \kk)
\cong
\Ext_{G_{\beta}}^{*}(\mathcal{L}_{\beta}^{\bullet}, \mathcal{L}_{\beta})
\end{equation}

\begin{Thm}
\label{Thm:Morita}
We have the following commutative diagram:
$$
\xy
\xymatrix{
\hK^{G_{\beta}}(\Zg_{\beta})_{\kk}
\ar[r]
\ar[d]^-{\cong}_{\RR^{G_{\beta}}}
&
\End \left(
\hK^{G_{\beta}}(\Mg_{\beta} \times_{E_{\beta}} \F_{\beta})_{\kk}
\right)
\ar[d]^-{\cong}_{\RR^{G_{\beta}}}
&
\hK^{G_{\beta}}(\cZ_{\beta})_{\kk}^{\mathrm{op}}
\ar[l]
\ar[d]^-{\cong}_{\RR^{G_{\beta}}}
\\
H_{*}^{G_{\beta}}(\Zg_{\beta}, \kk)^{\wedge} 
\ar[r]
\ar[d]^-{\cong}_-{(\ref{Eq:isomG2})}
&
\End \left( H_{*}^{G_{\beta}}(\Mg_{\beta} \times_{E_{\beta}} 
\F_{\beta},\kk)^{\wedge} \right)
\ar[d]^-{\cong}_-{(\ref{Eq:isomG3})}
&
H_{*}^{G_{\beta}}(\cZ_{\beta}, \kk)^{\wedge \mathrm{op}}
\ar[d]^-{\cong}_-{(\ref{Eq:isomG1})}
\ar[l]
\\
\Ext_{G_{\beta}}^{*}(\mathcal{L}_{\beta}^{\bullet},
\mathcal{L}_{\beta}^{\bullet})^{\wedge}
\ar[r]
&
\End \left(
\Ext_{G_{\beta}}^{*}(\mathcal{L}_{\beta}^{\bullet},
\mathcal{L}_{\beta})^{\wedge} \right)
&
\Ext_{G_{\beta}}^{*}(\mathcal{L}_{\beta},
\mathcal{L}_{\beta})^{\wedge \mathrm{op}},
\ar[l]
}
\endxy
$$
where each row denotes the bimodule structure 
defined above.
In particular, 
the bimodule $\hK^{G_{\beta}}(\Mg_{\beta} \times_{E_{\beta}}
\F_{\beta} )_{\kk}$ gives a Morita equivalence
between two convolution algebras 
$\hK^{G_{\beta}}(\Zg_{\beta})_{\kk}$
and $\hK^{G_{\beta}}(\cZ_{\beta})_{\kk}$.
\end{Thm}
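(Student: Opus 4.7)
The plan is to establish the diagram row by row, reading the Morita equivalence off the bottom row (where it follows from the decomposition theorem) and propagating it upward through the vertical isomorphisms.

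For the vertical arrows, the left and right $\RR^{G_{\beta}}$ columns are already shown to be $\widehat{R}(G_{\beta})_{\kk}$-algebra isomorphisms in Propositions~\ref{Prop:RRflag} and~\ref{Prop:RRquiver}. For the middle column I would argue that Proposition~\ref{Prop:chbimodule}, summed over $\ii \in I^{\beta}$, gives a $\ch^{G_{\beta}}$-isomorphism on $\hK^{G_{\beta}}(\Mg_{\beta} \times_{E_{\beta}} \F_{\beta})_{\kk}$; multiplying by the invertible class $p_{1}^{*} \Td^{G_{\beta}}_{\Mg_{\beta}}$ converts this into the middle $\RR^{G_{\beta}}$ isomorphism, exactly as in the definition preceding Proposition~\ref{Prop:RR}. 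The middle-row-to-bottom-row isomorphisms on the flanks are (\ref{Eq:isomG2}) and (\ref{Eq:isomG1}); for the intermediary column one applies the standard push-forward formula to identify
$H_{*}^{G_{\beta}}(\Mg_{\beta} \times_{E_{\beta}} \F_{\beta}, \kk) \cong \Ext^{*}_{G_{\beta}}(\mathcal{L}_{\beta}^{\bullet}, \mathcal{L}_{\beta})$, which is (\ref{Eq:isomG3}) as recorded.

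For commutativity, the two top squares are precisely instances of Proposition~\ref{Prop:RR} applied to the triples $(\Mg_{\beta}, \Mg_{\beta}, \F_{\beta})$ (for the left action) and $(\Mg_{\beta}, \F_{\beta}, \F_{\beta})$ (for the right action); these are all non-singular ambient spaces, so the statement goes through verbatim even though the fiber product $\Mg_{\beta} \times_{E_{\beta}} \F_{\beta}$ is itself singular. The two bottom squares are a general feature of the geometric extension algebra formalism of \cite{CG97}: convolution in $H^{G_{\beta}}_{*}$ corresponds under the push-forward identification to the Yoneda product of $\Ext$-classes of the pushed-forward complexes, so the bimodule structure on $\Ext^{*}_{G_{\beta}}(\mathcal{L}_{\beta}^{\bullet}, \mathcal{L}_{\beta})$ matches the one induced from convolution on $H^{G_{\beta}}_{*}(\Mg_{\beta} \times_{E_{\beta}} \F_{\beta}, \kk)^{\wedge}$ after completion.

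Once the diagram is verified, the Morita claim reduces to the bottom row. Here, by the decomposition theorem recalled above, $\mathcal{L}_{\beta} = \bigoplus_{\mm} L_{\mm} \otimes \mathcal{IC}_{\mm}$ and $\mathcal{L}_{\beta}^{\bullet} = \bigoplus_{\mm} L_{\mm}^{\bullet} \otimes \mathcal{IC}_{\mm}$ both contain every $\mathcal{IC}_{\mm}$ with $\mm \in \KP(\beta)$ as a summand with non-zero multiplicity (by \cite[Corollary~2.8]{Kato14} and \cite[Theorem~14.3.2]{Nakajima01} respectively). Hence, as objects in the semisimple category of $G_{\beta}$-equivariant perverse sheaves on $E_{\beta}$ (up to shifts), $\mathcal{L}_{\beta}$ and $\mathcal{L}_{\beta}^{\bullet}$ are additively Morita-equivalent in the sense that the bimodule $\Ext^{*}_{G_{\beta}}(\mathcal{L}_{\beta}^{\bullet}, \mathcal{L}_{\beta})$ is a progenerator on both sides; taking completions preserves this. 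The main technical hurdle I anticipate is the commutativity check for the bottom squares: one needs to be careful that the identifications (\ref{Eq:isomG1}), (\ref{Eq:isomG2}), (\ref{Eq:isomG3}) all come from compatible choices of adjunction morphisms and orientation data so that convolution on homology really does intertwine with Yoneda on $\Ext$, rather than differing by signs or shifts. This is a standard but somewhat bookkeeping-heavy verification along the lines of \cite[Section 8.6]{CG97}, and once settled the theorem follows.
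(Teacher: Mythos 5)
Your proposal is correct and follows essentially the same route as the paper: the upper squares commute by Proposition~\ref{Prop:RR} applied to the two triples of smooth ambients you identify, the lower squares commute by the (equivariant) version of \cite[Theorem~8.6.7]{CG97}, and the Morita claim is read off at the $\Ext$-level from the fact that every $\mathcal{IC}_{\mm}$ appears with non-zero multiplicity in both $\mathcal{L}_{\beta}$ and $\mathcal{L}^{\bullet}_{\beta}$. The paper's proof is just a terser version of what you wrote.
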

\begin{proof}
The commutativity of the upper half (resp.~lower half) 
of the diagram follows from Proposition~\ref{Prop:RR}
(resp.~an equivariant version of \cite[Theorem~8.6.7]{CG97}).
\end{proof}

\subsection{The left action of $U_{q}(L\g)$}
 
In this subsection, 
we fix $\ii = (i_1, \ldots, i_d) \in I^{\beta}$
and investigate the $U_{q}(L\g)$-module
structure of 
the pull-back
$\widehat{\Phi}_{\beta}^{*} (
\hK^{G_{\beta}}(\Mg_{\beta} \times_{E_{\beta}}
\F_{\ii})_{\kk})$.

We use the following notation.
For each $i \in I$, we define
$\lambda_{i} := \cl(\phi(\alpha_i)) = \varpi_{j}$ and
$a_{i} := q^{p}$ if $\phi(\alpha_{i}) = (j, p) \in \widehat{I}$.
Recall from Theorem~\ref{Thm:Nakajima} that we have
\begin{equation}
\label{Eq:isomRfund}
\End_{U_{q}} ( \bW(\lambda_{i}))
\cong R(\G(\lambda_{i}))
\otimes_{A}\kk = R(G(\lambda_{i}))_{\kk} \cong
\kk[z^{\pm 1}_{\lambda_{i}}],
\end{equation}
where $z_{\lambda_i}$ denotes the class of the $1$-dimensional 
representation of $G(\lambda_{i}) = \C^{\times}$
of weight $1$.

We recall some properties
of the tensor product variety
$\tZ(\lambda ; w_{\ii})$.
Let 
$$
\HH_{\beta} := H_{\beta} \times \C^{\times}
\subset
G_{\beta} \times \C^{\times}
= \G_{\beta} 
\subset \G(\lambda)
$$
be a maximal torus.
By construction,
the subvariety $\tZ(\lambda ; w_{\ii})
\subset \M(\lambda)$
is stable under the action of $\HH_{\beta}$. 
The convolution product makes 
the $\HH_{\beta}$-equivariant $K$-group
$K^{\HH_{\beta}}(\widetilde{\mathfrak{Z}}(\lambda; w_{\ii}))$
into a left $K^{\HH_{\beta}}(Z(\lambda))$-module.
Via the composition of the homomorphisms
$$
U_{q}(L\g) 
\xrightarrow{\Phi_{\lambda}} 
K^{\G(\lambda)}(Z(\lambda))\otimes_{A} \kk
\to
K^{\HH_{\beta}}(Z(\lambda)) \otimes_{A} \kk,  
$$
where the latter one is the restriction to $\HH_{\beta} 
\subset \G(\lambda)$, 
we regard the $\HH_{\beta}$-equivariant $K$-group
$K^{\HH_{\beta}}(\widetilde{\mathfrak{Z}}(\lambda; w_{\ii}))
\otimes_{A} \kk$
as a $U_{q}(L\g)$-module.

\begin{Thm}[Nakajima \cite{Nakajima01t}]
\label{Thm:tensor}
There is a $U_{q}(L\g)$-module isomorphism 
$$
K^{\HH_{\beta}}(\widetilde{\mathfrak{Z}}(\lambda; w_{\ii}))
\otimes_{A} \kk
\cong 
\bV^{\otimes \ii} :=
\bW(\lambda_{i_1}) \otimes \cdots
\otimes \bW(\lambda_{i_d}),
$$
where the action of $R(\HH_{\beta}) \otimes_{A} \kk$
on the LHS is translated into the action on the RHS 
via the isomorphism
\begin{align}
\label{Eq:isomO}
R(\HH_{\beta}) \otimes_{A} \kk
& \xrightarrow{\cong} \mathcal{O}_{\ii} := 
\kk[X_{1}^{\pm 1}, \ldots,  X_{d}^{\pm 1}] \subset 
\End_{U_{q}}(\bV^{\otimes \ii}); \\
[\C v_{w_{\ii}(k)}] & 
\mapsto
X_{k}, \nonumber
\end{align}
where we set
$X_{k}:= z_{\lambda_{i_k}}$
using the notation in (\ref{Eq:isomRfund}).
\end{Thm}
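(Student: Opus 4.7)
The plan is to deduce this isomorphism from Nakajima's construction in \cite{Nakajima01t}, so the bulk of the work consists of setting up the identification and checking that our normalizations match his.

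First, I would analyze the $\rho_\ii$-fixed and attracting geometry. The cocharacter $\rho_\ii$ induces a $\Z$-weight decomposition on $\M_0(\lambda)$, and by Nakajima's analysis the connected components of $\M_0(\lambda)^{\rho_\ii(\C^\times)}$ are indexed by ordered decompositions of $\lambda$ into fundamental weights. The component singled out by the sequence $\ii = (i_1, \ldots, i_d)$ is canonically $\prod_{k=1}^d \M_0(\lambda_{i_k})$, and by definition $\tZ(\lambda; w_\ii)$ is the $\rho_\ii$-attracting subvariety of $\M(\lambda)$ over this fixed locus. This is the geometric incarnation of the tensor product.

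Second, I would invoke the main theorem of \cite{Nakajima01t}, which is a $K$-theoretic hyperbolic-restriction statement: the convolution $U_q(L\g)$-module $K^{\HH_\beta}(\tZ(\lambda; w_\ii))\otimes_A \kk$ is isomorphic to the tensor product $\bW(\lambda_{i_1}) \otimes \cdots \otimes \bW(\lambda_{i_d})$, with the action on the right-hand side defined via Drinfeld's coproduct, and the order of the factors read off from the order in which the weights $\lambda_{i_k}$ appear along the $\rho_\ii$-filtration.

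Third, for the identification (\ref{Eq:isomO}) of the $R(\HH_\beta)$-action, I would trace through the fixed point decomposition: the line $\C v_{w_\ii(k)} \subset D$ corresponds to the $k$-th factor $\bW(\lambda_{i_k})$, so the class $[\C v_{w_\ii(k)}]$ acts as the tautological generator of $\End_{U_q}(\bW(\lambda_{i_k})) \cong \kk[z^{\pm 1}_{\lambda_{i_k}}]$ from (\ref{Eq:isomRfund}), which is exactly $X_k$ by convention.

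The main obstacle is the second step: identifying the convolution action with Drinfeld's tensor coproduct requires the full machinery of Nakajima's tensor product varieties, including the geometric realization of the universal $R$-matrix via specialization in the attracting direction. Checking that the orientation of $\rho_\ii$ (defined using the basis ordering $v_{w_\ii(1)}, \ldots, v_{w_\ii(d)}$) produces the correct left-to-right reading of the tensor factors $\bW(\lambda_{i_1}) \otimes \cdots \otimes \bW(\lambda_{i_d})$, and not the reversed one, is the key bookkeeping point to verify against Nakajima's conventions.
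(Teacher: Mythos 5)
The paper gives no proof of this statement: it is quoted verbatim as a result of Nakajima \cite{Nakajima01t} (as the bracketed attribution indicates), and the text immediately moves on to deducing the completed/localized version. Your outline is therefore doing more work than the paper does, and it is structurally on the right track — the attracting set of $\rho_\ii$ is indeed Nakajima's tensor product variety, and his theorem is the key input. Two inaccuracies are worth flagging. First, the $\rho_\ii$-fixed locus of $\M_0(\lambda)$ is \emph{not} ``indexed by ordered decompositions of $\lambda$ into fundamental weights'': the cocharacter $\rho_\ii$ already fixes a $\Z$-grading on $W = D$ with one-dimensional graded pieces $\C v_{w_\ii(k)}$, so the decomposition $\lambda = \sum_k \lambda_{i_k}$ is determined by $\rho_\ii$ itself and $\M_0(\lambda)^{\rho_\ii(\C^\times)}$ is the single product $\prod_k \M_0(\lambda_{i_k})$; the combinatorics of ordered decompositions only arises when one varies the cocharacter, not for a fixed $\rho_\ii$. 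Second, Nakajima's proof does not invoke the ``universal $R$-matrix via specialization in the attracting direction''; his argument goes through a filtration of $K^{\HH_\beta}(\tZ)$ by attracting strata (hyperbolic localization on the $V$-grading) together with an explicit comparison of the convolution action with the tensor action, and the $R$-matrix appears only afterwards as a consequence of this structure rather than as an ingredient. You are right that the remaining delicate point is matching the order of the factors $\bW(\lambda_{i_1}) \otimes \cdots \otimes \bW(\lambda_{i_d})$ to Nakajima's conventions (which order reads off the filtration by weight, and in which direction); that is precisely the kind of normalization the cited theorem absorbs, and it is flagged but not actually checked in your sketch.
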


The decomposition (\ref{Eq:group})
$\G_{\beta} \cong G_{\beta} \times T_{\beta}$
induces the decomposition
$\HH_{\beta} \cong H_{\beta} \times T_{\beta}$
of the maximal torus $\HH_{\beta}$.
Similarly to the case of $\G_{\beta}$-equivariant $K$-groups 
in Subsection \ref{Ssec:Nakajima},
this decomposition yields 
a natural isomorphism
$$
K^{\HH_{\beta}}(X) 
\otimes_{A} \kk
\cong K^{H_{\beta}}(X)_{\kk}
$$
for any $\HH_{\beta}$-variety $X$ 
with a trivial $T_{\beta}$-action.
When $X = \mathrm{pt}$,
we have the following commutative diagram:
\begin{equation}
\label{Diag:R}
\xy
\xymatrix{
R(\HH_{\beta})\otimes_{A} \kk
\ar[r]^-{\cong}
\ar[d]^-{\cong}_{(\ref{Eq:isomO})}
&
R(H_{\beta})_{\kk}
\ar[d]^-{\cong}
\\
\mathcal{O}_{\ii}
=
\kk[X_{1}^{\pm 1}, \ldots, X_{d}^{\pm1}]
\ar[r]^-{\cong}
&
\kk[y_{1}^{\pm 1}, \ldots, y_{d}^{\pm 1} ] 1_{\ii},
}
\endxy
\end{equation}
where the bottom horizontal arrow
sends the element $a_{i_k}^{-1}X_{k}$
to $y_{k} 1_{\ii}$
for $1 \le k \le d$.
Under this isomorphism, the  
maximal ideal $\rr^{\prime}_{\beta} \subset 
R(\HH_{\beta}) \otimes_{A} \kk$
defined as the kernel of the restriction
$R(\HH_{\beta})\otimes_{A}\kk \to R(T_{\beta})\otimes_{A} \kk = \kk$
corresponds to the augmentation ideal
of $R(H_{\beta})_{\kk}$.
Therefore we have a natural isomorphism
\begin{equation}
\label{Eq:complH}
\left[ 
K^{\HH_{\beta}}(X) \otimes_{A} \kk
\right]^{\wedge}_{\rr^{\prime}_{\beta}}
\cong
\hK^{H_{\beta}}(X)_{\kk},
\end{equation}
where $[-]_{\rr^{\prime}_{\beta}}^{\wedge}$
denotes the $\rr^{\prime}_{\beta}$-adic completion.
In particular, completing the diagram (\ref{Diag:R}),
we get 
$$
\xy
\xymatrix{
\left[ R(\HH_{\beta})\otimes_{A} \kk 
\right]^{\wedge}_{\rr^{\prime}_{\beta}}
\ar[r]^-{\cong}
\ar[d]^-{\cong}
&
\widehat{R}(H_{\beta})_{\kk}
\ar[d]^-{\cong}
\\
\hO_{\ii}
:=
\kk[\![X_{1} -a_{i_1}, \ldots, X_{d}-a_{i_d}]\!]
\ar[r]^-{\cong}
&
\kk[\![y_{1} -1 , \ldots, y_{d}-1]\!] 1_{\ii}.
}
\endxy
$$ 

\begin{Thm}
\label{Thm:left}
We have the following isomorphism
of $U_{q}(L\g)$-modules:
$$
\widehat{\Phi}_{\beta}^{*}
\left(
\hK^{G_{\beta}}(\Mg_{\beta} \times_{E_{\beta}} \F_{\ii})_{\kk}
\right)
\cong
\hV^{\otimes \ii} :=
\bV^{\otimes \ii} \otimes_{\mathcal{O}_{\ii}} \hO_{\ii}.
$$
\end{Thm}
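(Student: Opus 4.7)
The plan is to reduce everything to Nakajima's tensor product variety $\tZ(\lambda; w_\ii)$ and then invoke Theorem~\ref{Thm:tensor}, which identifies $K^{\HH_\beta}(\tZ(\lambda;w_\ii)) \otimes_A \kk$ with $\bV^{\otimes \ii}$. The geometric preparation is already in place: by (\ref{Eq:isomatr}) and (\ref{Eq:obs2}), we have $\Mg_\beta \times_{E_\beta} \F_\ii \cong G_\beta \times^{B_\ii} \tZ(\lambda;w_\ii)^{T_\beta}$, and the reduction (\ref{Eq:redK}) together with the natural isomorphism (\ref{Eq:complH}) gives
\begin{equation*}
\hK^{G_\beta}(\Mg_\beta \times_{E_\beta} \F_\ii)_\kk \cong \hK^{H_\beta}(\tZ(\lambda;w_\ii)^{T_\beta})_\kk \cong \left[K^{\HH_\beta}(\tZ(\lambda;w_\ii)^{T_\beta}) \otimes_A \kk\right]^\wedge_{\rr^\prime_\beta}.
\end{equation*}

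\textbf{Localization and identification.} Next I would invoke the Thomason localization theorem for the $T_\beta$-action on $\tZ(\lambda;w_\ii)$: the pushforward along the closed embedding of the $T_\beta$-fixed locus induces an isomorphism
\begin{equation*}
\left[K^{\HH_\beta}(\tZ(\lambda;w_\ii)^{T_\beta}) \otimes_A \kk\right]^\wedge_{\rr^\prime_\beta} \xrightarrow{\cong} \left[K^{\HH_\beta}(\tZ(\lambda;w_\ii)) \otimes_A \kk\right]^\wedge_{\rr^\prime_\beta}
\end{equation*}
after $\rr^\prime_\beta$-adic completion. Feeding this into Theorem~\ref{Thm:tensor}, the right-hand side becomes the completion of $\bV^{\otimes \ii}$ as an $\mathcal{O}_\ii$-module. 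By the diagram displayed just after (\ref{Eq:complH}), the $\rr^\prime_\beta$-adic completion on $\mathcal{O}_\ii$ is precisely $\hO_\ii$. Since $\bV^{\otimes \ii}$ is free of finite rank over $\mathcal{O}_\ii$ (Theorem~\ref{Thm:Nakajima}), this completion agrees with $\bV^{\otimes \ii} \otimes_{\mathcal{O}_\ii} \hO_\ii = \hV^{\otimes \ii}$.

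\textbf{Matching the $U_q$-actions; main obstacle.} It remains to check that the resulting isomorphism intertwines the $U_q(L\g)$-module structures. Both actions ultimately descend from Nakajima's homomorphism $\Phi_\lambda$: the action of $\widehat{\Phi}_\beta$ on the left is obtained from $\Phi_\lambda$ by restriction to $\G_\beta$ followed by completion and localization, with convolution on $\hK^{G_\beta}(\Mg_\beta \times_{E_\beta} \F_\ii)_\kk$; the action on the right is built by Theorem~\ref{Thm:tensor} from $\Phi_\lambda$ by restriction to $\HH_\beta$ and convolution on $K^{\HH_\beta}(\tZ(\lambda;w_\ii)) \otimes_A \kk$. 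Since $\HH_\beta \subset \G_\beta \subset \G(\lambda)$ forms a restriction tower and convolution is compatible with restriction as well as (via Thomason localization) with passage to $T_\beta$-fixed loci, the two actions should match after completion. The technical heart of the argument will be aligning the three identifications---the reduction (\ref{Eq:redK}), the localization isomorphism, and the scalar diagram (\ref{Diag:R})---simultaneously, ensuring at each step that the convolution of $\hK^{G_\beta}(\Zg_\beta)_\kk$ on the bimodule translates to Nakajima's action on $\bV^{\otimes \ii}$. I expect this bookkeeping to be the main obstacle; once completed, the desired $U_q(L\g)$-module isomorphism follows.
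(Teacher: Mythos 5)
Your chain of isomorphisms is exactly the paper's (read in the opposite direction): Theorem~\ref{Thm:tensor}, Thomason localization for $T_{\beta}$, the completion identification (\ref{Eq:complH}), and the reduction (\ref{Eq:redK}). Your observation that $\rr'_{\beta}$-adic completion of $\bV^{\otimes\ii}$ equals $\bV^{\otimes\ii}\otimes_{\mathcal{O}_{\ii}}\hO_{\ii}$ because of freeness over $\mathcal{O}_{\ii}$ is a correct small detail the paper leaves implicit. So the setup is right.

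The issue is that you defer the compatibility of the $U_q(L\g)$-actions to ``bookkeeping,'' and this is precisely where the paper's proof does its actual work. Two things have to be checked. First, that the restriction tower $\HH_{\beta}\subset\G_{\beta}\subset\G(\lambda)$ commutes with completion and with the localization isomorphism; this is a formal commutative diagram, and your appeal to general compatibility of convolution with restriction covers it. Second, and more delicately, you must show that the reduction isomorphism (\ref{Eq:redK}) intertwines the convolution action of $\hK^{G_{\beta}}(\Zg_{\beta})_{\kk}$ on $\hK^{G_{\beta}}(\Mg_{\beta}\times_{E_{\beta}}\F_{\ii})_{\kk}$ with the convolution action of $\hK^{H_{\beta}}(\Zg_{\beta})_{\kk}$ on $\hK^{H_{\beta}}(\tZ(\lambda;w_{\ii})^{T_{\beta}})_{\kk}$. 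This is not naive restriction: (\ref{Eq:redK}) passes through the associated-bundle presentation $\Mg_{\beta}\times_{E_{\beta}}\F_{\ii}\cong G_{\beta}\times^{B_{\ii}}\tZ(\lambda;w_{\ii})^{T_{\beta}}$, and the compatibility of convolution with this induction/reduction step is a nontrivial fact. The paper supplies it by invoking an $H_{\beta}$-equivariant version of \cite[Proposition~8.2.3]{Nakajima01}. Your general principle (``convolution compatible with passage to fixed loci'') does not by itself say anything about associated bundles; you need to either cite the analogue of that proposition or re-derive it. Until that reference or argument is in place, the proof has a concrete gap at exactly the step you flagged as an obstacle.
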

\begin{proof}
Actually, there is the following isomorphism:
\begin{align*}
\hV^{\otimes \ii} & \cong 
\left[
K^{\HH_{\beta}}(\tZ(\lambda; w_{\ii}))\otimes_{A} \kk
\right]_{\rr^{\prime}_{\beta}}^{\wedge}
&& (\text{Theorem~\ref{Thm:tensor}}) \\
& \cong \left[
K^{\HH_{\beta}}(\tZ(\lambda
; w_{\ii})^{T_{\beta}})\otimes_{A} \kk
\right]_{\rr^{\prime}_{\beta}}^{\wedge}
&& (\text{localization theorem})\\
& \cong \hK^{H_{\beta}}(\tZ(\lambda; w_{\ii})^{T_{\beta}})_{\kk}
&& (\text{isomorphism (\ref{Eq:complH})})\\
&\cong \hK^{G_{\beta}}(\Mg_{\beta} 
\times_{E_{\beta}} \F_{\ii})_{\kk}.
&& (\text{isomorphism (\ref{Eq:redK})})
\end{align*}
We need to show that this is
a $U_{q}(L\g)$-homomorphism.
By construction,
the following diagram 
of $\kk$-algebras commutes:
$$
\xy
\xymatrix{
K^{\G(\lambda)}(Z(\lambda))
\otimes_{A} \kk
\ar[r]
\ar[d]
&
\left[
K^{\G_{\beta}}(\Zg_{\beta})
\otimes_{A} \kk
\right]^{\wedge}_{\rr_{\beta}}
\ar[r]^-{\cong}_-{(\ref{Eq:complG})}
\ar[d]
&
\hK^{G_{\beta}}(\Zg_{\beta})_{\kk}
\ar[d]
\\
K^{\HH_{\beta}}(Z(\lambda)) 
\otimes_{A}\kk
\ar[r]
&
\left[
K^{\HH_{\beta}}(\Zg_{\beta})
\otimes_{A} \kk
\right]^{\wedge}_{\rr_{\beta}^{\prime}}
\ar[r]^-{\cong}_-{(\ref{Eq:complH})}
&
\hK^{H_{\beta}}(\Zg_{\beta})_{\kk},
}
\endxy
$$
where the vertical arrows denote 
the restrictions to the maximal tori.
Moreover, by using an $H_{\beta}$-equivariant version of
\cite[Proposition 8.2.3]{Nakajima01},
we can see that
the following diagram also commutes:
$$
\xy
\xymatrix{
K^{G_{\beta}}(\Zg_{\beta})_{\kk} \otimes 
K^{G_{\beta}}(\Mg_{\beta} \times_{E_{\beta}} \F_{\ii})_{\kk}
\ar[r]^-{*}
\ar[d]_{(\text{restriction to $H_{\beta}$}) \otimes (\ref{Eq:redK}) }  
&
K^{G_{\beta}}(\Mg_{\beta} \times_{E_{\beta}} \F_{\ii})_{\kk}
\ar[d]^-{\cong}_-{(\ref{Eq:redK})}
\\
K^{H_{\beta}}(\Zg_{\beta})_{\kk} \otimes 
K^{H_{\beta}}(\tZ(\lambda; w_{\ii})^{T_{\beta}})_{\kk}
\ar[r]^-{*}
&
K^{H_{\beta}}(\tZ(\lambda; w_{\ii})^{T_{\beta}})_{\kk},
}
\endxy
$$
where the horizontal arrows denote
the convolution products.  
From these commutative diagrams, combined with
the definition of $\widehat{\Phi}_{\beta}$
and Theorem~\ref{Thm:tensor},
we obtain the conclusion.
\end{proof}

\subsection{The right action of $\widehat{H}_{Q}(\beta)$}

Summarizing the discussion so far, 
we have obtained 
a $(U_{q}(L\g), \widehat{H}_{Q}(\beta))$-bimodule 
structure on the left $U_{q}(L\g)$-module
$$
\hV^{\otimes \beta} := \bigoplus_{\ii \in I^{\beta}}
\hV^{\otimes \ii}
$$
such that the following diagram commutes:
$$
\xy
\xymatrix{
U_{q}(L\g) 
\ar[r]
\ar[d]^-{\widehat{\Phi}_{\beta}}
&
\End(\hV^{\otimes \beta})
\ar[d]^-{\cong}
&
\hH_{Q}(\beta)^{\mathrm{op}}
\ar[l]_-{\exists \psi}
\ar[d]^-{\cong}
\\
\hK^{G_{\beta}}(\Zg_{\beta})_{\kk}
\ar[r]
&
\End \left( \hK^{G_{\beta}}(\Mg_{\beta} \times_{E_{\beta}} 
\F_{\beta})_{\kk}
\right)
&
\hK^{G_{\beta}}(\cZ_{\beta})_{\kk}^{\mathrm{op}}.
\ar[l]
}
\endxy
$$
In this subsection, we describe the right action 
$
\psi : \hH_{Q}(\beta) \to \End_{U_q}(\hV^{\otimes \beta})^{\mathrm{op}}
$
of the quiver Hecke algebra $\hH_{Q}(\beta)$
on the space $\hV^{\otimes \beta}$.

For each $\ii = (i_1, \ldots, i_d) \in I^{\beta}$, we set
$$
v_{\ii} := (w_{\lambda_{i_1}} \otimes \cdots \otimes 
w_{\lambda_{i_d}} )\otimes 1
\in \hV^{\otimes \ii} = (\bW(\lambda_{i_1}) \otimes 
\cdots \otimes \bW(\lambda_{i_d})) \otimes_{\mathcal{O}_{\ii}}
\hO_{\ii}.
$$

\begin{Prop}
\label{Prop:hwspace}
The highest weight space
$
\bigoplus_{\ii \in I^{\beta}} \hO_{\ii} v_{\ii} 
\subset \hV^{\otimes \beta} 
$
of weight $\lambda$
is stable under the right action of $\hH_{Q}(\beta)$.
Moreover it is isomorphic to 
the completed polynomial representation 
$\widehat{P}_{\beta}$ defined in (\ref{Eq:Phat}).
\end{Prop}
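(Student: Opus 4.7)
The plan is to realise the highest weight space geometrically as a $K$-theoretic push-forward from a closed subvariety isomorphic to $\B_\beta$, transport the right action of $\hH_Q(\beta)$ to this push-forward, and then match it with the polynomial representation via Varagnolo-Vasserot's Theorem~\ref{Thm:VV} and Proposition~\ref{Prop:RRflag}.

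\emph{Step 1 (Geometric identification).} Since $\M(0,\lambda)$ consists of a single point, which is automatically $T_\beta$-fixed and which maps to $0\in\M_{0,\beta}^\bullet\cong E_\beta$ by Theorem~\ref{Thm:HL}, there is for each $\ii\in I^\beta$ a distinguished closed embedding
\[
\iota_\ii : \B_\ii \cong \{0\}\times_{E_\beta} \F_\ii
\hookrightarrow \Mg_\beta\times_{E_\beta} \F_\ii.
\]
Under the chain of isomorphisms in the proof of Theorem~\ref{Thm:left} (which factors through the tensor product variety $\tZ(\lambda;w_\ii)$), the Cartan weight decomposition of $\hV^{\otimes\ii}$ corresponds to the stratification $\Mg_\beta=\bigsqcup_{\nu\in\cQ^+}\M(\nu,\lambda)^{T_\beta}$, with the weight-$\lambda$ piece coming from $\nu=0$. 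Tracking the generator $v_\ii$---originating from $w_{\lambda_{i_1}}\otimes\cdots\otimes w_{\lambda_{i_d}}=[\mathcal{O}_{\M(0,\lambda)}]$ under Theorem~\ref{Thm:tensor}---through the successive reductions to $H_\beta$-equivariant $K$-theory, one finds that $v_\ii$ is identified, up to an invertible element of $\hO_\ii$, with $(\iota_\ii)_*[\mathcal{O}_{\B_\ii}]$, a free generator of $(\iota_\ii)_*\hK^{G_\beta}(\B_\ii)_\kk$ over $\hO_\ii$. Summing over $\ii$ yields the equality of subspaces
\[
\bigoplus_{\ii\in I^\beta}\hO_\ii v_\ii
\;=\;\iota_*\,\hK^{G_\beta}(\B_\beta)_\kk
\;\subset\;\hK^{G_\beta}(\Mg_\beta\times_{E_\beta}\F_\beta)_\kk.
\]

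\emph{Steps 2, 3 (Stability and identification with $\widehat{P}_\beta$).} The right convolution action of $\hK^{G_\beta}(\cZ_\beta)_\kk$ is defined relative to $\Mg_\beta\times\F_\beta\times\F_\beta$ and only modifies the second factor; in particular it preserves the support condition ``first coordinate $=0$'', so $\iota_*\hK^{G_\beta}(\B_\beta)_\kk$ is stable and the restricted action coincides with the standard right convolution action of $\hK^{G_\beta}(\cZ_\beta)_\kk$ on $\hK^{G_\beta}(\B_\beta)_\kk\cong\hK^{G_\beta}(\{0\}\times_{E_\beta}\F_\beta)_\kk$ considered in Subsection~\ref{Ssec:VV}. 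By Proposition~\ref{Prop:RRflag}, the Riemann-Roch map $\RR^{G_\beta}$ intertwines this action with the convolution action on $H_*^{G_\beta}(\B_\beta,\kk)^\wedge$, which by Theorem~\ref{Thm:VV} together with (\ref{Eq:isomP}) is naturally identified with the right $\hH_Q(\beta)$-module $\widehat{P}_\beta$ defined in (\ref{Eq:Phat}).

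The main obstacle is the identification of $v_\ii$ in Step 1: one has to track how the class $[\mathcal{O}_{\M(0,\lambda)}]$ on the tensor product variety corresponds to $[\mathcal{O}_{\B_\ii}]$ on $\Mg_\beta\times_{E_\beta}\F_\ii$ through the isomorphisms of Theorems~\ref{Thm:tensor} and~\ref{Thm:left}. This reduces, via the standard equivalence $K^{G_\beta}(G_\beta\times^{B_\ii}Y)\cong K^{B_\ii}(Y)$ and the localization theorem, to the elementary fact that $[\mathcal{O}_{\M(0,\lambda)}]$ is the $R(\HH_\beta)$-free generator of the weight-$\lambda$ subspace of $\bW(\lambda_{i_1})\otimes\cdots\otimes\bW(\lambda_{i_d})$. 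Once this is settled, Steps 2 and 3 are formal consequences of the machinery developed in Section~\ref{Sec:Conv}.
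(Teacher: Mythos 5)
Your proposal is correct and takes essentially the same route as the paper: the paper's own proof likewise observes that the weight-$\lambda$ component of $\Mg_\beta \times_{E_\beta}\F_\beta$ is $\M(0,\lambda)^{T_\beta}\times_{E_\beta}\F_\beta = \B_\beta$ and then cites (\ref{Eq:isomP}) and (\ref{Eq:isomRRB}) (together with Theorem~\ref{Thm:VV} and Proposition~\ref{Prop:RRflag}) for the identification $\hK^{G_\beta}(\B_\beta)_\kk \cong \widehat{P}_\beta$ of right $\hH_Q(\beta)$-modules. You spell out more explicitly why the subspace is stable under the right convolution action (the first factor is untouched, so the component $\M(0,\lambda)^{T_\beta}$ is preserved) and you correctly flag that the generator $v_\ii$ is matched with $1_\ii$ only up to a unit of $\hO_\ii$ --- a point the paper addresses immediately after the proposition by renormalizing via the Todd-class factor $C_\ii$ from (\ref{Eq:const}).
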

\begin{proof}
Note that the connected component
of the graded quiver variety
$\Mg_{\beta} = \M(\lambda)^{T_{\beta}}$
corresponding to the highest weight space 
is $\M(0, \lambda)^{T_{\beta}} = \mathrm{pt}$
and hence
$\M(0, \lambda)^{T_{\beta}} \times_{E_{\beta}} \F_{\beta}
= \B_{\beta}$.
Therefore we have
$$
\bigoplus_{\ii \in I^{\beta}} \hO_{\ii} v_{\ii} 
\cong \hK^{G_{\beta}}(\M(0, \lambda)^{T_{\beta}}
\times_{E_{\beta}} \F_{\beta})_{\kk}
\cong 
\hK^{G_{\beta}}(\B_{\beta})_{\kk} \cong \widehat{P}_{\beta}
$$
as $\hH_{Q}(\beta)$-module,
where the last isomorphism comes from
(\ref{Eq:isomP}) and (\ref{Eq:isomRRB}).
\end{proof}

Henceforth,
we normalize the isomorphism
$\hK^{G_{\beta}}(\Mg_{\beta} \times_{E_{\beta}} \F_{\ii})_{\kk}
\cong \hV^{\otimes \ii}$ of $U_{q}(L\g)$-modules
in Theorem~\ref{Thm:left}
by multiplying the element of $\hO_{\ii}$
corresponding to the ratio $C_{\ii}^{-1}$ of 
Todd classes defined in (\ref{Eq:const})
for each $\ii \in I^{\beta}$
so that
the isomorphism
$$
\bigoplus_{\ii \in I^{\beta}} \hO_{\ii} v_{\ii} 
= 
\bigoplus_{\ii \in I^{\beta}} \kk[\![ X_{1}-a_{i_1}, 
\ldots, X_{d}-a_{i_d} ]\!] v_{\ii} 
\xrightarrow{\cong}
\widehat{P}_{\beta} =
\bigoplus_{\ii \in I^{\beta}} \kk[\![x_{1}, \ldots, x_{d}]\!]1_{\ii}
$$
in Proposition~\ref{Prop:hwspace} above
sends the element $v_{\ii}$ to $1_{\ii}$. 

Now we recall the normalized $R$-matrices.
For any pair $(i_1, i_2) \in I^{2}$,
we simplify $z_{k} := z_{\lambda_{i_k}}$ for $k=1,2$.
Then it is known
(see e.g.~\cite[Section 8]{Kashiwara02}) 
that there is a unique
$(
U_{q} \otimes 
\kk[z_{1}^{\pm 1}, z_{2}^{\pm 1} ]
)$-homomorphism,
called the normalized $R$-matrix
$$
R_{i_1, i_2}^{\mathrm{norm}} :
\bW(\lambda_{i_1}) \otimes \bW(\lambda_{i_2}) 
\to
\kk(z_{2}/z_{1})\otimes_{\kk[(z_{2} / z_{1})^{\pm 1}]}
\left(
\bW(\lambda_{i_2}) \otimes \bW(\lambda_{i_1})
\right),
$$
such that $R^{\mathrm{norm}}_{i_1, i_2}
(w_{\lambda_{i_1}} \otimes w_{\lambda_{i_2}}) 
= w_{\lambda_{i_2}} \otimes w_{\lambda_{i_1}}$.
The denominator of
the normalized $R$-matrix
$R^{\mathrm{norm}}_{i_1, i_2}$
is defined as the monic polynomial 
$d_{i_1, i_2}(u) \in \kk[u]$
of the smallest degree 
among polynomials satisfying
$$
\Ima R^{\mathrm{norm}}_{i_1, i_2} \subset
d_{i_1, i_2}(z_{2} / z_{1})^{-1} \otimes 
\left(
\bW(\lambda_{i_2}) \otimes \bW(\lambda_{i_1})
\right).
$$
By \cite[Proposition~9.3]{Kashiwara02}, we have 
\begin{equation} 
\label{Eq:denominator}
d_{i_1, i_2}(1) \neq 0 .
\end{equation}

Let $\bK_{\ii}$
be the fraction field of the ring $\hO_{\ii}$
for each $\ii \in I^{\beta}$.
It is known that
the $U_{q} \otimes \bK_{\ii}$-module
$$
\hV^{\otimes \ii}_{\bK} :=
\bV^{\otimes \ii}\otimes_{\mathcal{O}_{\ii}} \bK_{\ii}
=\hV^{\otimes \ii} \otimes_{\hO_{\ii}} \bK_{\ii}$$
is irreducible (see e.g.~\cite[Proposition 9.5]{Kashiwara02}).
For each $w \in \SG_{d}$, the $\kk$-algebra isomorphism
$$
\varphi_{w}:
\hO_{\ii} \xrightarrow{\cong} \hO_{\ii \cdot w}; 
\quad
f(X_{1}, \ldots, X_{d}) \mapsto 
f^{w}(X_{1}, \ldots, X_{d}) := 
f(X_{w(1)}, \ldots, X_{w(d)})  
$$
induces an isomorphism 
$\bK_{\ii} \xrightarrow{\cong} \bK_{\ii \cdot w}$
of the fraction fields,
which we denote by the same symbol $\varphi_{w}$.
The pull-back
$
\varphi_{w}^{*} 
\hV_{\bK}^{\otimes \ii \cdot w} 
$
is an irreducible 
$U_{q}\otimes \bK_{\ii}$-module.

For each $\ii \in I^{\beta}$
and $1 \le k < d$,
we define the following non-zero
$U_{q}\otimes \bK_{\ii}$-homomorphism
$$
R^{\ii}_{k}
:=
\left(1^{\otimes (k-1)} \otimes 
R_{i_k, i_{k+1}}^{\mathrm{norm}}
\otimes 1^{\otimes(d-k-1)}
\right)
\otimes \varphi_{s_{k}}
: 
\hV_{\bK}^{\otimes \ii}
\to
\varphi_{s_{k}}^{*}
\hV^{\otimes \ii \cdot s_{k}}_{\bK}.
$$
By
the irreducibility,
this is an isomorphism and
we have
\begin{equation}
\label{Eq:hcKhom}
\Hom_{U_{q}\otimes \bK_{\ii}}
\left( \hV_{\bK}^{\otimes \ii},
\varphi_{s_{k}}^{*}
\hV^{\otimes \ii \cdot s_{k}}_{\bK} \right)
= 
\bK_{\ii} \cdot R^{\ii}_{k}.
\end{equation}
Let $\hV^{\otimes \beta}_{\bK} := \bigoplus_{\ii \in I^{\beta}} \hV^{\otimes \ii}_{\bK}$.
We regard 
$\hV^{\otimes \beta} \subset \hV^{\otimes \beta}_{\bK}$ naturally.

\begin{Thm}
\label{Thm:main}
The right action of the quiver Hecke algebra
$\hH_{Q}(\beta)$ on the space $\hV^{\otimes \beta}$ is given by 
the following formulas: 
\begin{align}
\label{Eq:e}
v \cdot e(\ii^{\prime}) & = \delta_{\ii, \ii^{\prime}} v \\ 
\label{Eq:x}
v \cdot x_{k} & = \log(a^{-1}_{i_k}X_{k}) v \\
\label{Eq:tau}
v \cdot \tau_{k} &=
\begin{cases}
\displaystyle
(
\log(a_{i_k}^{-1}X_{k}) - \log(a_{i_{k+1}}^{-1}X_{k+1}) )^{-1}
( R^{\ii}_{k}(v)- v )
&
\text{if $i_k = i_{k+1}$}, \\
(\log(a_{i_k}^{-1}X_{k+1}) - \log(a_{i_{k+1}}^{-1}X_{k})) R^{\ii}_{k}(v) &
\text{if $i_k \leftarrow i_{k+1}$}, \\
R^{\ii}_{k}(v) &
\text{otherwise},
\end{cases}
\end{align}
where 
$v \in \hV^{\otimes \ii}$
with $\ii = (i_{1}, \ldots, i_{d}) \in I^{\beta}$ and
$\log(X) :=
\sum_{m = 1}^{\infty} (-1)^{m+1}(X -1 )^{m}/m.
$
\end{Thm}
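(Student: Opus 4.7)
The approach is to verify the three formulas generator by generator, combining Proposition~\ref{Prop:hwspace}, which identifies the right action on the highest weight space $\bigoplus_{\ii}\hO_{\ii}v_{\ii}$ with the completed polynomial representation of Theorem~\ref{Thm:KL}, with the irreducibility of $\hV^{\otimes \ii}_{\bK}$ as a $U_{q}\otimes \bK_{\ii}$-module. The latter yields $\End_{U_{q}\otimes \bK_{\ii}}(\hV^{\otimes \ii}_{\bK})=\bK_{\ii}$ and, via equation~(\ref{Eq:hcKhom}), pins down $U_{q}$-intertwiners from $\hV^{\otimes \ii}_{\bK}$ to $\varphi_{s_{k}}^{*}\hV^{\otimes \ii\cdot s_{k}}_{\bK}$ as scalar multiples of $R^{\ii}_{k}$.

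Formula (\ref{Eq:e}) is immediate from the decomposition $\hV^{\otimes \beta}=\bigoplus_{\ii}\hV^{\otimes \ii}$, which reflects $\F_{\beta}=\bigsqcup_{\ii}\F_{\ii}$, together with Theorem~\ref{Thm:VV}(\ref{Thm:VV:elem}) sending $e(\ii)$ to $[\F_{\ii}]$. For formula (\ref{Eq:x}), I first observe that $\bV^{\otimes \ii}$ is free of finite rank over $\mathcal{O}_{\ii}$ by Theorem~\ref{Thm:Nakajima}, so $\hV^{\otimes \ii}$ is free over $\hO_{\ii}$; combined with the irreducibility of $\hV^{\otimes \ii}_{\bK}$ this gives $\End_{U_{q}}(\hV^{\otimes \ii})=\hO_{\ii}$ acting by multiplication. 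Hence $\psi(x_{k}e(\ii))$ equals multiplication by a unique $c_{k}\in \hO_{\ii}$ determined by its value on $v_{\ii}$; Proposition~\ref{Prop:hwspace}, Theorem~\ref{Thm:VV}(\ref{Thm:VV:elem}), the normalization (\ref{Eq:const}), and the completed version of diagram~(\ref{Diag:R}) identifying $a_{i_{k}}^{-1}X_{k}\leftrightarrow e^{x_{k}}$ together yield $c_{k}=\log(a_{i_{k}}^{-1}X_{k})$.

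For formula (\ref{Eq:tau}), write $T:=\psi(\tau_{k}e(\ii))\colon \hV^{\otimes \ii}\to \hV^{\otimes \ii\cdot s_{k}}$. When $i_{k}\neq i_{k+1}$, the relation $\tau_{k}x_{l}=x_{s_{k}(l)}\tau_{k}$ together with (\ref{Eq:x}) and the compatibility $a_{i_{l}}=a_{(\ii\cdot s_{k})_{s_{k}(l)}}$ implies $T\circ X_{l}=X_{s_{k}(l)}\circ T$ upon exponentiation; thus $T$ is $\hO_{\ii}$-linear into $\varphi_{s_{k}}^{*}\hV^{\otimes \ii\cdot s_{k}}$ and extends to a $U_{q}\otimes \bK_{\ii}$-linear map. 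By (\ref{Eq:hcKhom}), $T=fR^{\ii}_{k}$ for a unique $f\in \bK_{\ii}$, read off from $T(v_{\ii})$ via Theorem~\ref{Thm:KL}, giving cases (ii) and (iii) of the formula. When $i_{k}=i_{k+1}$, the corrections in $\tau_{k}x_{k}=x_{k+1}\tau_{k}-e(\ii)$ and $\tau_{k}x_{k+1}=x_{k}\tau_{k}+e(\ii)$ obstruct the direct argument, but a short computation shows that $\widetilde{T}:=(\log(a_{i_{k}}^{-1}X_{k})-\log(a_{i_{k+1}}^{-1}X_{k+1}))T+\mathrm{id}$ does intertwine the $X_{l}$'s with their $s_{k}$-twists. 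Hence $\widetilde{T}=gR^{\ii}_{k}$ by (\ref{Eq:hcKhom}); since $\tau_{k}(v_{\ii})=0$ from Theorem~\ref{Thm:KL} and $R^{\ii}_{k}(v_{\ii})=v_{\ii}$, we find $g=1$, which is case (i).

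The main obstacle is the case $i_{k}=i_{k+1}$: the singular prefactor $(\log(a_{i_{k}}^{-1}X_{k})-\log(a_{i_{k+1}}^{-1}X_{k+1}))^{-1}$ requires $R^{\ii}_{k}-\mathrm{id}$ to be divisible by this difference in $\End(\hV^{\otimes \ii})$ rather than only in $\End(\hV^{\otimes \ii}_{\bK})$. This divisibility is established as a corollary of the identity $\widetilde{T}=R^{\ii}_{k}$ derived above together with the fact that $T$ preserves $\hV^{\otimes \ii}$; the non-vanishing $d_{i_{k},i_{k+1}}(1)\neq 0$ from (\ref{Eq:denominator}) ensures that $R^{\ii}_{k}$ is regular at the diagonal $X_{k}=X_{k+1}=a_{i_{k}}$, so no spurious singularity intervenes.
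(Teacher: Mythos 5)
Your treatment of formulas (\ref{Eq:e}) and (\ref{Eq:tau}) follows essentially the same route as the paper: derive from the commutation relations that $\psi(e(\ii)\tau_k)$ (respectively $\mathcal{D}\psi(e(\ii)\tau_k)+1$ when $i_k=i_{k+1}$) is a $U_q\otimes\hO_\ii$-linear map into $\varphi_{s_k}^{*}\hV^{\otimes\ii\cdot s_k}$, extend to the localization, invoke irreducibility via (\ref{Eq:hcKhom}), and pin down the scalar against $v_\ii$ using Proposition~\ref{Prop:hwspace} and Theorem~\ref{Thm:KL}. The closing paragraph on the divisibility of $R^{\ii}_k-\mathrm{id}$ by $\mathcal{D}$ is a correct consistency remark, not an additional obstacle.

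There is, however, a genuine gap in your argument for formula (\ref{Eq:x}). You assert $\End_{U_q}(\hV^{\otimes\ii})=\hO_\ii$ from freeness plus irreducibility of $\hV^{\otimes\ii}_{\bK}$, but this is false whenever $\ii$ has repeated entries: if $i_k=i_{k+1}$ then (\ref{Eq:denominator}) guarantees $d_{i_k,i_{k+1}}(1)\neq 0$, so the operator $R^{\ii}_k$ is regular on the completion $\hO_\ii$ and restricts to a genuine $U_q$-endomorphism of $\hV^{\otimes\ii}$ that is $\varphi_{s_k}$-semilinear rather than $\hO_\ii$-linear, hence lies outside $\hO_\ii$. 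What \emph{is} true from Schur's lemma and freeness is $\End_{U_q\otimes\hO_\ii}(\hV^{\otimes\ii})=\hO_\ii$; to use it you would first need to know that $\psi(x_ke(\ii))$ is $\hO_\ii$-linear. But the convolution action is only $R(G_\beta)_\kk$-linear, i.e.\ commutes with the Weyl-invariant part $\hO_\ii^{\SG_\beta}$, not a priori with all of $\hO_\ii$; establishing $\hO_\ii$-linearity here is essentially equivalent to formula (\ref{Eq:x}) itself. The paper sidesteps this circularity by computing $\psi(e^{x_k}e(\ii))$ directly as convolution with $\Delta_*[\mathcal{L}_\ii(k)]$ (Theorem~\ref{Thm:VV}~(\ref{Thm:VV:elem})), identifying this with $-\otimes p_2^{*}[\mathcal{L}_\ii(k)]$ via \cite[Lemma 8.1.1]{Nakajima01}, and reading off multiplication by $y_k 1_\ii$ through the reduction isomorphism (\ref{Eq:redK}) and diagram (\ref{Diag:R}). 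You should replace the Schur-lemma shortcut by this explicit convolution computation.
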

\begin{proof}
The first formula (\ref{Eq:e}) 
is clear from Theorem~\ref{Thm:VV}~(\ref{Thm:VV:elem}) 
and the construction.

To prove the second formula (\ref{Eq:x}), 
we assume that the vector $v \in \hV^{\otimes \ii}$ 
corresponds to an element $\zeta \in 
\hK^{G_{\beta}}(\Mg_{\beta} \times_{E_{\beta}} \F_{\ii})_{\kk}$
under the isomorphism in Theorem \ref{Thm:left}.
By Theorem~\ref{Thm:VV}~(\ref{Thm:VV:elem}),
the right action of $e^{x_{k}} \in \hH_{Q}(\beta)$ on $\hV^{\otimes \ii}$
corresponds to the convolution 
with the class $\Delta_{*}[\mathcal{L}_{\ii}(k)]
\in \hK^{G_{\beta}}(\F_{\ii} \times_{E_{\beta}} \F_{\ii})_{\kk}$
from the right, where
$\mathcal{L}_{\ii}(k)$ is the line bundle on $\F_{\ii}$ defined in Subsection 
\ref{Ssec:VV} and
$\Delta : \F_{\ii} \to \F_{\ii} \times_{E_{\beta}} \F_{\ii}$
is the diagonal embedding.
By \cite[Lemma 8.1.1]{Nakajima01}, 
we have
$
\zeta * (\Delta_{*}[\mathcal{L}_{\ii}(k)]) = \zeta \otimes p_{2}^{*}[\mathcal{L}_{\ii}(k)],
$
where $p_{2} : \Mg_{\beta} \times_{E_{\beta}} \F_{\ii} \to \F_{\ii}$
is the second projection.
The isomorphism (\ref{Eq:redK})  
translates the operation $- \otimes p_{2}^{*}[\mathcal{L}_{\ii}(k)]$
on $K^{G_{\beta}}(\Mg_{\beta} \times_{E_{\beta}} \F_{\ii})$
into the multiplication of the element $y_{k} 1_{\ii} \in R(H_{\beta})$
on $K^{H_{\beta}}(\tZ(\lambda; w_{\ii})^{T_{\beta}})$.
Thus we have 
$v \cdot e^{x_{k}} =  (a_{i_{k}}^{-1} X_{k})v$
(see (\ref{Diag:R})).

Let us verify the third formula (\ref{Eq:tau}).
Let $\psi : \hH_{Q}(\beta) \to \End_{U_{q}} ( \hV^{\otimes \beta})^{\mathrm{op}}$
be the structure morphism. 
First, we consider the case $i_k = i_{k+1}$.
From the commutation relation between $e(\ii) \tau_k$ and
$x_l$ in $H_{Q}(\beta)$, and the formula (\ref{Eq:x}) for $\psi(x_{l})$ 
which we have proved in the previous paragraph, 
we see that
$$
(\mathcal{D} \psi(e(\ii)\tau_{k}) + 1) f 
= f^{s_{k}}(\mathcal{D} \psi(e(\ii)\tau_{k}) + 1)
$$    
holds in $\End_{U_{q}}
(\hV^{\otimes \ii})$ for any $f \in \hO_{\ii}$,
where we put $\mathcal{D} := \log(a^{-1}_{i_{k}}X_{k}) - \log(a_{i_{k+1}}^{-1}X_{k+1})$.
In other words, the operator
$\mathcal{D} \psi(e(\ii)\tau_{k}) + 1$ belongs to
$
\Hom_{U_{q}\otimes \hO_{\ii} } 
( \hV^{\otimes \ii}, \varphi_{s_k}^{*}\hV^{\otimes \ii}).
$
Therefore it extends to
an operator on the localizations. Namely, we can regard
$$
\mathcal{D} \psi(e(\ii)\tau_{k}) + 1
\in \Hom_{U_{q} \otimes \bK_{\ii} } 
\left( \hV^{\otimes \ii}_{\bK}, \varphi_{s_k}^{*}\hV^{\otimes \ii}_{\bK}
\right) 
\cong \bK_{\ii} \cdot R^{\ii}_{k},
$$
where the last isomorphism is (\ref{Eq:hcKhom}).
By Proposition \ref{Prop:hwspace} and the formulas in Theorem 
\ref{Thm:KL}, we see that
$(\mathcal{D} \psi(e(\ii)\tau_{k}) + 1) v_{\ii} = v_{\ii} = R^{\ii}_{k}(v_{\ii})$.
Therefore we obtain
$\mathcal{D} \psi(e(\ii)\tau_{k}) + 1 = R^{\ii}_{k}$
as an operator on $\hV^{\otimes \ii}$.

The case $i_{k} \neq i_{k+1}$ is easier.
In this case, the commutation relation in $H_{Q}(\beta)$ 
and the formula (\ref{Eq:x}) for $\psi(x_{l})$ show that 
the operator $\psi(e(\ii) \tau_{k})$ already belongs to
$\End_{U_{q}\otimes \hO_{\ii}} ( 
\hV^{\otimes \ii}, \varphi_{s_{k}}^{*} \hV^{\otimes \ii})$. 
Therefore it extends to an element in
$\Hom_{U_{q} \otimes \bK_{\ii}}( \hV^{\otimes \ii}_{\bK}, 
\varphi_{s_k}^{*}\hV^{\otimes \ii}_{\bK}).$
Then we proceed just as in the previous paragraph
to
obtain the desired formula (\ref{Eq:tau}),
taking 
Proposition~\ref{Prop:hwspace}, the formulas 
in Theorem \ref{Thm:KL} and (\ref{Eq:hcKhom})
into consideration.
\end{proof}

\begin{Cor}[= \cite{KKK15} Conjecture 4.3.2]
\label{Cor:KKKconj}
For any $i_1, i_2 \in I$, the order of zero of the denominator
$d_{i_1, i_2}(u)$ at the point $u=a_{i_2} / a_{i_1}$ is at most one.
\end{Cor}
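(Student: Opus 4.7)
The strategy is to extract the pole simpleness directly from the unconditional well-definedness of the geometric right action. By Theorem~\ref{Thm:Morita}, the action $\psi \colon \hH_{Q}(\beta) \to \End(\hV^{\otimes \beta})^{\mathrm{op}}$ is defined through convolution in equivariant $K$-theory, with no \emph{a~priori} assumption on poles of $R$-matrices. Taking $\beta = \alpha_{i_{1}} + \alpha_{i_{2}}$ and $\ii = (i_{1}, i_{2})$ produces a $U_{q}$-linear operator $\psi(e(\ii)\tau_{1}) \colon \hV^{\otimes \ii} \to \hV^{\otimes \ii \cdot s_{1}}$ landing in the non-localized completion, not merely in its fraction field.

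Theorem~\ref{Thm:main} rewrites this same operator as $c \cdot R^{\ii}_{1}$, where $R^{\ii}_{1}$ is built from the normalized $R$-matrix $R^{\mathrm{norm}}_{V(\lambda_{i_{1}}), V(\lambda_{i_{2}})}$ together with the coordinate swap $\varphi_{s_{1}}$, and $c$ is $\mathcal{D}^{-1}$ (case $i_{1} = i_{2}$, paired with $R^{\ii}_{1} - \mathrm{id}$ in place of $R^{\ii}_{1}$), $\mathcal{D}''$ (case $i_{1} \leftarrow i_{2}$), or the constant $1$ in the remaining case. Let $m$ denote the order of zero of $d_{i_{1},i_{2}}(u)$ at $u_{0} := a_{i_{2}}/a_{i_{1}}$; after the swap $\varphi_{s_{1}}$ the factor $R^{\ii}_{1}$ has a pole of order exactly $m$ at the center of $\hO_{\ii \cdot s_{1}}$, along the direction $t := X_{1}/X_{2} - u_{0} \in \mathfrak{m}_{\ii \cdot s_{1}}$.

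The crux is a Taylor expansion in the maximal ideal showing that the cancellation factor has a simple zero along precisely this direction $t$ in each nontrivial case: using $\log(Y) = \sum_{m \geq 1}(-1)^{m+1}(Y-1)^{m}/m$ one gets $\mathcal{D}'' \equiv -u_{0}^{-1} t$ modulo $\mathfrak{m}_{\ii \cdot s_{1}}^{2}$ (and $\mathcal{D} \equiv t$ modulo $\mathfrak{m}_{\ii}^{2}$ when $i_{1} = i_{2}$, so $u_{0} = 1$). Regularity of $\psi(e(\ii)\tau_{1})$ then forces the pole bound: in the case $i_{1} \leftarrow i_{2}$, multiplying a pole of order $m$ by this simple zero leaves a pole of order $m - 1$, so $m \leq 1$; the two other cases actually give the stronger bound $m = 0$. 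Combining, one obtains $m \leq 1$ for every pair $(i_{1}, i_{2})$, which is the claimed simpleness. The step most likely to require real care is the leading-term match between $\mathcal{D}''$ and $t$; once it is in hand, everything else reduces to bookkeeping of pole orders.
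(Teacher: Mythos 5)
Your proposal matches the paper's proof in essence: both take $\beta = \alpha_{i_1}+\alpha_{i_2}$, read off from the formulas in Theorem~\ref{Thm:main} that the geometric operator $\psi(e(\ii)\tau_1)$ is regular (built unconditionally from convolution in $K$-theory), and then observe that in the case $i_1 \leftarrow i_2$ the logarithmic multiplier has a simple zero at the relevant point while in the remaining cases $R^{\ii}_1$ is already regular, giving $m \le 1$. The only small divergence is the case $i_1 = i_2$: the paper dispatches it immediately with the known fact $d_{i_1,i_2}(1)\neq 0$ from (\ref{Eq:denominator}), whereas you re-derive $m=0$ there from the $\tau$-formula; your route is also valid but slightly more work for the same conclusion.
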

\begin{proof}
Since we know (\ref{Eq:denominator}), we may assume that
$i_1 \neq i_2$. We consider a sequence $\ii = (i_1, i_2) \in I^{\beta}$
with $\beta = \alpha_{i_1} + \alpha_{i_2}$.
When $i_{1} \leftarrow i_{2}$, the formula (\ref{Eq:tau})
tells us that the operator
$(\log(a_{i_1}^{-1}z_{1}) - \log(a_{i_2}^{-1}z_{2})) R^{\ii}_{1}$
belongs to $\Hom_{U_{q}}(\hV^{\otimes \ii}, \hV^{\otimes \ii \cdot s_{1}} )$, where we put 
$z_{k} = z_{\lambda_{i_k}}$ for $k=1,2$ as before. 
Notice that
$$
\log(a^{-1}_{i_1} z_{1}) - \log(a^{-1}_{i_2} z_{2}) \in 
( {z_2}/{z_1} - a_{i_2} / a_{i_1} )\cdot \hO_{\ii}^{\times}. 
$$
Therefore we find that the order of zero of $d_{i_1, i_2}(u)$
at $u = a_{i_2} / a_{i_1}$ is at most one.
For the other case $i_k \not \leftarrow i_{k+1}$, 
by the formula (\ref{Eq:tau}),
the operator $R^{\ii}_{1}$ already
belongs to $\Hom_{U_{q}} (\hV^{\otimes \ii}, \hV^{\otimes \ii \cdot s_{1}} )$.
Therefore the order of zero of $d_{i_1, i_2}(u)$
at $u = a_{i_2} / a_{i_1}$ is zero.
\end{proof}

\begin{Rem}
\label{Rem:KKK}
For each $\ii \in I^{\beta}$, we define a topological $\kk$-algebra automorphism
$\sigma_{\ii}$ of $\hO_{\ii}$ by setting
$$
\sigma_{\ii}(\log(a_{i_k}^{-1}X_{k})) := a_{i_k}^{-1}X_{k} -1
$$
for all $k$. This induces a $U_{q}(L\g)$-module automorphism 
$\sigma := \bigoplus_{\ii \in I^{\beta}} (1 \otimes \sigma_{\ii})$ on 
the module $\hV^{\otimes \beta}$. If we twist our right $\hH_{Q}(\beta)$-action
by this automorphism $\sigma$ (i.e.~we replace the structure map
$\psi$ with $\sigma\psi(-) \sigma^{-1}$),
we get a new right $\hH_{Q}(\beta)$-action
on $\hV^{\otimes \beta}$ given by the following formulas:
\begin{align}
\label{Eq:e2}
v \cdot e(\ii^{\prime}) & = \delta_{\ii, \ii^{\prime}} v \\ 
\label{Eq:x2}
v \cdot x_{k} & = (a_{i_k}^{-1}X_{k} -1 ) v \\
\label{Eq:tau2}
v \cdot \tau_{k} &=
\begin{cases}
\displaystyle
(
a^{-1}_{i_k}X_{k}  - a_{i_{k+1}}^{-1}X_{k+1})^{-1}
( R^{\ii}_{k}(v)- v )
&
\text{if $i_k = i_{k+1}$}, \\
(a_{i_k}^{-1}X_{k+1} - a_{i_{k+1}}^{-1}X_{k}) R^{\ii}_{k}(v) &
\text{if $i_k \leftarrow i_{k+1}$}, \\
R^{\ii}_{k}(v) &
\text{otherwise},
\end{cases}
\end{align}
where 
$v \in \hV^{\otimes \ii}$
with $\ii = (i_{1}, \ldots, i_{d}) \in I^{\beta}$.
This new action is same as
Kang-Kashiwara-Kim\rq{}s action in \cite{KKK18}, \cite{KKK15}.    
\end{Rem} 

\begin{Thm}
\label{Thm:equiv}
The formulas (\ref{Eq:e}), (\ref{Eq:x}) and (\ref{Eq:tau}) (or 
the formulas (\ref{Eq:e2}), (\ref{Eq:x2}) and (\ref{Eq:tau2}))
define a structure of a $(U_{q}(L\g), \hH_{Q}(\beta))$-bimodule
on the left $U_{q}(L\g)$-module $\hV^{\otimes \beta}$.
The functor $M \mapsto \hV^{\otimes \beta} \otimes_{\hH_{Q}(\beta)} M$
gives an equivalence of categories:
$$
\hH_{Q}(\beta) \modfd \xrightarrow{\simeq} \Cc_{Q, \beta}.
$$
\end{Thm}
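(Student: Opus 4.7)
The plan is to combine the three ingredients already assembled: the Morita equivalence of Theorem~\ref{Thm:Morita}, the categorical equivalence of Theorem~\ref{Thm:mine}, and the explicit identification of the bimodule from Theorems~\ref{Thm:left} and~\ref{Thm:main}. Well-definedness of both bimodule structures comes essentially for free: the convolution on $\hK^{G_{\beta}}(\Mg_{\beta}\times_{E_{\beta}}\F_{\beta})_{\kk}$ manifestly defines a $(\hK^{G_{\beta}}(\Zg_{\beta})_{\kk},\hK^{G_{\beta}}(\cZ_{\beta})_{\kk})$-bimodule, so transporting along the $U_{q}(L\g)$-module isomorphism of Theorem~\ref{Thm:left} (on the left) and the algebra isomorphism $\hH_{Q}(\beta)\cong\hK^{G_{\beta}}(\cZ_{\beta})_{\kk}$ (on the right) yields a bimodule structure on $\hV^{\otimes\beta}$, which Theorem~\ref{Thm:main} computes to be the one given by (\ref{Eq:e})--(\ref{Eq:tau}). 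The second presentation (\ref{Eq:e2})--(\ref{Eq:tau2}) is obtained by conjugating by the topological automorphism $\sigma$ of Remark~\ref{Rem:KKK}, hence is equally well-defined.

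For the equivalence, I would string together the following chain of equivalences of abelian categories:
\begin{equation*}
\hH_{Q}(\beta)\modfd \xrightarrow{\cong}
\hK^{G_{\beta}}(\cZ_{\beta})_{\kk}\modfd
\xrightarrow{\simeq}
\hK^{G_{\beta}}(\Zg_{\beta})_{\kk}\modfd
\xrightarrow[\widehat{\Phi}_{\beta}^{*}]{\simeq} \Cc_{Q,\beta}.
\end{equation*}
The first arrow is the isomorphism coming from Varagnolo--Vasserot (the Corollary at the end of Subsection~\ref{Ssec:VV}). The middle arrow is the Morita equivalence of Theorem~\ref{Thm:Morita}, given by tensoring with the bimodule $\hK^{G_{\beta}}(\Mg_{\beta}\times_{E_{\beta}}\F_{\beta})_{\kk}$; the third arrow is Theorem~\ref{Thm:mine}. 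It then remains to identify the composite functor with $M\mapsto \hV^{\otimes\beta}\otimes_{\hH_{Q}(\beta)} M$, and this is exactly what the commuting diagram of Theorem~\ref{Thm:Morita} together with Theorem~\ref{Thm:left} and Theorem~\ref{Thm:main} provides.

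The only genuine point to verify is that the Morita equivalence restricts to finite-dimensional modules. This follows because both convolution algebras are geometric extension algebras of semisimple perverse sheaves ($\mathcal{L}_{\beta}$ and $\mathcal{L}_{\beta}^{\bullet}$) whose simple summands $\mathcal{IC}_{\mm}$ are in bijection via $\mm\in\KP(\beta)$, with each $\mathcal{IC}_{\mm}$ appearing with nonzero multiplicity in both $\mathcal{L}_{\beta}$ and $\mathcal{L}_{\beta}^{\bullet}$; thus the Morita bimodule matches simples on the two sides, and preservation of finite-dimensionality is automatic.

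The step that requires the most care, and in my view the main obstacle, is the compatibility bookkeeping: one must check that the left $U_{q}(L\g)$-action transported through the Morita equivalence agrees with the $U_{q}(L\g)$-action arising from $\widehat{\Phi}_{\beta}^{*}$ on the target of Theorem~\ref{Thm:mine}. This compatibility is ultimately guaranteed by the commutative diagram of Theorem~\ref{Thm:Morita} (whose left column involves $\widehat{\Phi}_{\beta}$ via the factorization through $\hK^{G_{\beta}}(\Zg_{\beta})_{\kk}$) together with the identification of Theorem~\ref{Thm:left}; so once the diagram is parsed carefully no new computation is required. With these identifications in place, the equivalence follows, and by Remark~\ref{Rem:KKK} the same statement holds for the twisted action~(\ref{Eq:e2})--(\ref{Eq:tau2}).
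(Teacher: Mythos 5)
Your proposal is correct and takes essentially the same route as the paper's proof, which simply cites Theorem~\ref{Thm:mine}, Theorem~\ref{Thm:Morita}, and the discussion establishing the bimodule. Your chain
$\hH_{Q}(\beta)\modfd \xrightarrow{\cong}
\hK^{G_{\beta}}(\cZ_{\beta})_{\kk}\modfd
\xrightarrow{\simeq}
\hK^{G_{\beta}}(\Zg_{\beta})_{\kk}\modfd
\xrightarrow{\simeq} \Cc_{Q,\beta}$ is exactly what the paper intends, and your observation that each $\mathcal{IC}_{\mm}$ appears with nonzero finite multiplicity in both $\mathcal{L}_{\beta}$ and $\mathcal{L}_{\beta}^{\bullet}$ is the correct reason why the Morita bimodule is a progenerator that preserves finite-dimensionality, a detail the paper leaves implicit.
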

\begin{proof}
This follows from the discussions in this subsection,
Theorem~\ref{Thm:mine} and Theorem~\ref{Thm:Morita}.
\end{proof}

\end{document}